\newtheorem*{acknowledgements*}{Acknowledgements}
\newcommand{\s}{\mathbb{S}}
\newcommand{\z}{\mathbb{Z}}
\newcommand{\Z}{\mathbb{Z}}
\newcommand{\C}{\mathbb{C}}
\newcommand{\Hom}{\mbox{Hom}}
\newcommand{\Ext}{\mbox{Ext}}
\newcommand{\rel}{\operatorname{rel}}
\theoremstyle{definition}
\newtheorem{theorem}{Theorem}[section]
\newtheorem{prop}[theorem]{Proposition}
\newtheorem{lemma}[theorem]{Lemma}
\newtheorem{cor}[theorem]{Corollary}
\newtheorem{note}[theorem]{Note}
\newtheorem{defn}[theorem]{Definition}
\newtheorem{rmk}[theorem]{Remark}
\newtheorem{fact}[theorem]{Fact}
\newtheorem{thm}[theorem]{Theorem}
\newtheorem*{theorem*}{Theorem}
\newtheorem*{corollary*}{Corollary}
\newtheorem*{prop*}{Proposition}
\newtheorem*{rmk*}{Remark}
\newenvironment{manualtheorem}[1]{%
	\manualtheoreminner
}{\endmanualtheoreminner}
\begin{document}
	\title[Smooth Structures on $M\times\mathbb{S}^k$]{Smooth Structures on $M\times\mathbb{S}^k$}
	
	\author{Samik Basu}
	\address{Stat Math Unit, Indian Statistical Institute, Kolkata-700108, India}
	\email{samikbasu@isical.ac.in}
	
	\author{Ramesh Kasilingam}
	\address{Department of Mathematics, Indian Institute of Technology Madras, Chennai-600036, India}
	\email{rameshkasilingam.iitb@gmail.com  ; rameshk@iitm.ac.in}
	\author{Ankur Sarkar}
	\address{The Institute of Mathematical Sciences, A CI of Homi Bhabha National Institute, Chennai-600113, India}
	\email{ankurs@imsc.res.in; ankurimsc@gmail.com}	
	
    \subjclass [2020] {Primary : 57R55, 55P42 {; Secondary : 57R65, 55Q45}}
	\keywords{Concordance inertia group, Concordance smooth structure set, Normal invariant, Product manifold}

	\begin{abstract}
This paper explores various differentiable structures on the product manifold $M \times \mathbb{S}^k$, where $M$ is either a 4-dimensional closed oriented manifold or a simply connected 5-dimensional closed manifold. We identify the possible stable homotopy types of $M$ and use it to calculate the concordance inertia group and the concordance structure set of $M\times\mathbb{S}^k$ for $1\leq k\leq 10.$ These calculations enable us to further classify all manifolds that are homeomorphic to $\mathbb{C}P^2\times\mathbb{S}^k$, up to diffeomorphism, for each $4\leq k\leq 6$.
     \end{abstract}
	\maketitle

	\section{Introduction} 
 We work in the smooth category of closed, connected, oriented smooth manifolds and orientation preserving maps. One of the main interests in the smooth category is the classification of all smooth manifolds up to diffeomorphism. The discovery of exotic spheres by Milnor in 1956 \cite{milnor} has sparked global interest and intensive research into the existence and non-existence of exotic smooth structures. In this paper, we turn our attention to the investigation of exotic smooth structures on the product manifold $M\times\mathbb{S}^k,$ where $M$ is either a $4$-dimensional manifold or a simply connected $5$-dimensional manifold with $1\leq k\leq 10.$  Focusing on product manifolds, extensive studies have been conducted to classify smooth structures on $\mathbb{S}^p\times\mathbb{S}^q$ \cite{kubo, Kawakubo69, schultz, DC10, XW21}, $\mathbb{C}P^2\times\mathbb{S}^7$ \cite{bel}, and $\mathbb{C}P^2\times\mathbb{S}^3$ \cite{XW22}. The inertia groups of these manifolds have been identified as key elements in these studies.
 
 Recall that the \textit{inertia group} $I(M)$ of a closed smooth oriented $n$-manifold $M$ is the subgroup of the Kervaire-Milnor group $\Theta_n$ \cite{milnor} consisting of homotopy $n$-spheres $\Sigma$ such that $M\#\Sigma$ is diffeomorphic to $M$ via an orientation-preserving diffeomorphism. Generally, there are no systematic methods available for determining the inertia group of a manifold. Therefore, one might consider studying a specific subgroup of $I(M)$, referred to as the \textit{concordance inertia group} of $M$ which is defined as follows: 
	 \begin{defn}
	     Let $M$ be a smooth $n$-manifold. Consider the pair $(N,f)$, where $N$ is a smooth manifold and $f: N \to M$ is a homeomorphism. We say that two pairs $(N_1,f_1)$ and $(N_2,f_2)$ are \textit{concordant} if there exist a diffeomorphism $\Phi: N_1\to N_2$ and a homeomorphism $H: N_1\times [0,1]\to M\times [0,1]$ such that $H\vert_{N_1\times\{0\}}=f_1$ and $H\vert_{N_1\times \{1\}}=f_2\circ \Phi.$ We denote the set of all such concordance classes by $\mathcal{C}(M)$ and represent the concordance class of $(N,f)$ in $\mathcal{C}(M)$ as $[(N,f)]$. The base point of $\mathcal{C}(M)$ is the concordance class $[(M,Id_{M})]$ of the identity map $Id_{M} : M\to M$.
      
        The \textit{concordance inertia group} $I_c(M)$ of $M$ is the subgroup of $I(M)$ consisting of homotopy $n$-spheres $\Sigma$ such that $(M,Id_{M})$ and $(M\#\Sigma, h_{\Sigma})$ are concordant, where $h_{\Sigma}:M\#\Sigma\to M$ is the standard homeomorphism.
     \end{defn}
Following Kirby and Siebenmann \cite[Page 194]{kirby}, we have that there exists a connected $H$-space $Top/O$ such that there is a bijection between $\mathcal{C}(N)$ and $[N, Top/O]$ for any closed smooth $n$-manifold $N$ with $n\geq 5$. Furthermore, the concordance class of $(N, Id_{N})$ corresponds to the homotopy class of the constant map under this bijection. 
   Additionally, since $Top/O$ is an infinite loop space  \cite{Boardmanvogt}, it enables the computation of $I_c(N)$ and $\mathcal{C}(N)$ by establishing the stable homotopy type of $N$. 
  In particular, if $N=M\times\mathbb{S}^k,$ where $M$ is either a $4$-dimensional manifold or a simply connected $5$-dimensional manifold, the concordance inertia group $I_c(N)$ is explicitly computed using the stable decomposition of the manifold $M$. This decomposition is achieved through certain cohomology operations (see Section \ref{stabletypeM^4} and \ref{stabletypeM^5}). To be more precise, we prove the following :

\begin{manualtheorem}{A} \label{Theorem_A}\noindent
 Let $M$ be a closed, smooth, oriented $n$-manifold. The following statements hold.
		\begin{enumerate}			
			\item Suppose $M$ is a non-spin $4$-manifold. Then
   \begin{enumerate}
   \item For $3\leq k\leq 16,$  we have 
                 \[I_c(M\times\mathbb{S}^k)= \begin{cases}
                     \z/2, & \mbox{if $k=5,6,7,11,$ or $13$},\\
                     \z/2\oplus\z/2, & \mbox{if $k=14$}, \\
                     0 & \mbox{otherwise}.
                 \end{cases}\]
    \item For $k=17,18, 8n-2~(n\geq 3)$,~ $\z/2\subseteq I_c(M\times \mathbb{S}^k)$. 
		\end{enumerate}
			\item If $M$ is a simply connected non-spin $5$-manifold, then
            \begin{enumerate}
                 \item $I_c(M\times\mathbb{S}^k)=0$ for all $2\leq k\leq 10,$ excluding $k=4,5,6,$ and $10.$ 
                \item For $k=4,5,6,$ and $10,$
                \begin{enumerate}
                    \item   If a least positive integer $r$ exists such that $\mathbb{Z}/{2^r} \subseteq H_2(M;\mathbb{Z}),$ then:
                      \begin{enumerate}
				\item  \[ I_c(M\times\s^4)= \begin{cases} \Z/2\oplus\z/2,  &\mbox{if $r=1$ or $2$,}\\ \Z/2, &\mbox{if $r\geq 3$,}  \end{cases} \] 
				\item $I_c(M\times\s^k)=\mathbb{Z}/2$ for $k= 5, 10$,
				\item  \[ I_c(M\times\mathbb{S}^6)=\begin{cases} 
					\z/4, & \mbox{if $r=1$},\\
					\z/2, &\mbox{if $r\geq 2$}.
				\end{cases}\]
                 
			\end{enumerate}
                  \item If $H_2(M;\mathbb{Z})$ does not contain $2$-torsion, then $I_c(M\times\mathbb{S}^k)=\mathbb{Z}/2.$
                \end{enumerate}
              
               \end{enumerate}

            \item If $M$ is either a spin $4$-manifold or a simply connected spin $5$ manifold, then $I_c(M\times\mathbb{S}^k)=0$ for all $k\geq 1.$
		\end{enumerate}
  (Theorem \ref{Theorem_A} is a synthesis of Theorem \ref{concorM^4}, Proposition \ref{inerM^4} and Theorem \ref{inerM^5}).
	\end{manualtheorem}
    Note that when  \( \dim(M \times \mathbb{S}^k)=n+k = 5 \) or \( 6 \), the concordance inertia group \( I_c(M \times \mathbb{S}^k) \) is trivial, since \( \Theta_{n+k} = 0 \) for \( n+k = 5,6 \) \cite{milnor}.
%\textcolor{blue}{It is well known that \( \Theta_l = 0 \) for \( l = 5,6 \) \cite{milnor}, the concordance inertia group \( I_c(M \times \mathbb{S}^k) \) is necessarily trivial when \( \dim(M \times \mathbb{S}^k) = 5 \) or \( 6 \), and thus such cases are excluded from the analysis.}

The techniques involved in the proof of Theorem A are explicit calculations that make use of the non-triviality of certain elements in the stable homotopy groups. Following these calculations, we also determine the concordance smooth structure set of $M\times\mathbb{S}^k$ based on the stable homotopy type of aforementioned manifolds $M$ (refer to Propositions \ref{cM^4} and \ref{cM^5}). In particular, we apply these computations to the manifolds $\mathbb{C}P^2\times\mathbb{S}^k,$ where $4\leq k\leq 6$, and then identify the orbits of elements of $\mathcal{C}(\mathbb{C}P^2\times\mathbb{S}^k)$ under the action of the group of homotopy classes of self-homotopy equivalences. As a result of these computations, we are able to establish the following classification result.

\begin{manualtheorem}{B}\label{theo}
    Suppose $N$ is a closed smooth oriented manifold that is homeomorphic to $\mathbb{C}P^2\times\mathbb{S}^k.$ Then 
    \begin{itemize}
        \item[(a)] For $k=4$ and $5,$ $N$ is oriented diffeomorphic to $\mathbb{C}P^2\times\mathbb{S}^k.$
        \item[(b)] When $k=6,$ the manifold $N$ is oriented diffeomorphic to exactly one of the manifolds $\mathbb{C}P^2\times\mathbb{S}^6, (\mathbb{C}P^2\times\mathbb{S}^6)\#\Sigma_{\alpha_1},$ or $(\mathbb{C}P^2\times\mathbb{S}^6)\#(\Sigma_{\alpha_1})^{-1},$ where $\Sigma_{\alpha_1}\in \Theta_{10}$ is the exotic sphere of order $3$.
    \end{itemize}
    (Theorem \ref{theo} is a combination of Theorems \ref{classificationCP2S4}, \ref{classificationCP2S5} and \ref{classificationCP2S6}.)
\end{manualtheorem}

Note that it follows from \cite[Corollary 4.2]{MMRS} that $I(\mathbb{C}P^2\times\mathbb{S}^3)=\Theta_7$. Moreover, it was proved in \cite[Remark 13.2]{bel} that every manifold tangentially homotopy equivalent to $\mathbb{C}P^2\times\mathbb{S}^3$ is diffeomorphic to $\mathbb{C}P^2\times \mathbb{S}^3$. In Theorem~\ref{tangetial_classification}, we also prove that every manifold tangentially homotopy equivalent to $\mathbb{C}P^2\times\mathbb{S}^6$ is oriented diffeomorphic to either $\mathbb{C}P^2\times\mathbb{S}^6$, $(\mathbb{C}P^2\times\mathbb{S}^6)\#\Sigma_{\alpha_1}$, or $(\mathbb{C}P^2\times\mathbb{S}^6)\#\Sigma_{\alpha_1}^{-1}$.

Here, as an immediate consequence of Theorem \ref{theo}, we obtain the following. 

\begin{corollary*}
 For $k=4$ and $5,$ the inertia group of $\mathbb{C}P^2\times\mathbb{S}^k$ equals $\Theta_{4+k}$. On the other hand, for $k=6$, the inertia group of $\mathbb{C}P^2\times\mathbb{S}^k$ equals $\mathbb{Z}/2.$
\end{corollary*}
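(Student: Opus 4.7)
The plan is to extract the corollary directly from Theorem B by viewing the inertia group as the kernel of a naturally defined map from $\Theta_{4+k}$ to the set of oriented diffeomorphism classes of manifolds homeomorphic to $\mathbb{C}P^2\times\mathbb{S}^k$. First I would recall that, by the (topological) stability of connected sum, for every $\Sigma\in\Theta_{4+k}$ the manifold $(\mathbb{C}P^2\times\mathbb{S}^k)\#\Sigma$ is homeomorphic to $\mathbb{C}P^2\times\mathbb{S}^k$ via the standard Kervaire--Milnor homeomorphism $h_\Sigma$. Thus there is a well-defined map
\[
\Phi\colon \Theta_{4+k}\longrightarrow\bigl\{\text{oriented diffeomorphism classes of smooth manifolds } N\approx_{\mathrm{Top}} \mathbb{C}P^2\times\mathbb{S}^k\bigr\},\qquad \Sigma\longmapsto\bigl[(\mathbb{C}P^2\times\mathbb{S}^k)\#\Sigma\bigr],
\]
and by definition $I(\mathbb{C}P^2\times\mathbb{S}^k)$ is exactly $\Phi^{-1}([\mathbb{C}P^2\times\mathbb{S}^k])$.

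For $k=4,5$, Theorem B asserts that the target set of $\Phi$ has a single element, namely $[\mathbb{C}P^2\times\mathbb{S}^k]$ itself. Therefore $\Phi$ is constant, so $I(\mathbb{C}P^2\times\mathbb{S}^k)=\Theta_{4+k}$, establishing the first half of the corollary.

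For $k=6$, Theorem B asserts that the target of $\Phi$ has exactly three elements, represented by $\mathbb{C}P^2\times\mathbb{S}^6$, $(\mathbb{C}P^2\times\mathbb{S}^6)\#\Sigma_{\alpha_1}$, and $(\mathbb{C}P^2\times\mathbb{S}^6)\#\Sigma_{\alpha_1}^{-1}$. The fibers of $\Phi$ are the cosets of the inertia subgroup $I(\mathbb{C}P^2\times\mathbb{S}^6)\subseteq\Theta_{10}$ for the obvious action by connected sum (if $\Sigma\in I$ then $(\mathbb{C}P^2\times\mathbb{S}^6)\#(\Sigma'\#\Sigma)\cong (\mathbb{C}P^2\times\mathbb{S}^6)\#\Sigma'$). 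Since $\Phi$ is surjective onto a three-element set, the index of $I(\mathbb{C}P^2\times\mathbb{S}^6)$ in $\Theta_{10}\cong\mathbb{Z}/6$ is $3$, forcing $|I(\mathbb{C}P^2\times\mathbb{S}^6)|=2$. Because $\mathbb{Z}/6$ has a unique subgroup of order two, this identifies $I(\mathbb{C}P^2\times\mathbb{S}^6)$ with $\mathbb{Z}/2\subset\Theta_{10}$.

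The main (and essentially the only) obstacle is the input provided by Theorem B itself; once that classification is in hand, the proof of the corollary is a formal counting argument combined with the structure of $\Theta_8$, $\Theta_9$, and $\Theta_{10}$. The one subtlety to be careful about is verifying that $\Phi$ is well-defined (independent of the chosen homeomorphism $h_\Sigma$) so that its fibers are genuine cosets, which is standard and follows from the orientation-preserving hypothesis built into the definitions of both $I(\cdot)$ and Theorem B.
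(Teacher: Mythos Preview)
Your argument is correct and matches the derivation the paper advertises in the introduction: the corollary is stated there as ``an immediate consequence'' of Theorem~B, and your coset-counting from $\Theta_{10}\cong\mathbb{Z}/6$ onto a three-element set is exactly how one reads off $I(\mathbb{C}P^2\times\mathbb{S}^6)=\mathbb{Z}/2$ once the classification is in hand.

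It is worth noting, though, that in the body of the paper the logical dependency for $k=6$ runs the other way. There the inertia group is computed first (Theorem~\ref{inertiaCP2S6}) by a direct argument: Theorem~\ref{concorM^4} gives $I_c(\mathbb{C}P^2\times\mathbb{S}^6)=\mathbb{Z}/2$, Lemma~\ref{homotopyinertiaequalconcorinertia} upgrades this to $I_h=\mathbb{Z}/2$, and then Theorem~\ref{homoequiimplydiffeo} (every self-homotopy equivalence of $\mathbb{C}P^2\times\mathbb{S}^6$ is homotopic to a diffeomorphism) forces $I=I_h$. Only afterwards is this used to prove the classification Theorem~B(b). So your deduction from Theorem~B is valid as stated, but the paper's own route to the $k=6$ inertia group is independent of, and in fact an input to, Theorem~B.
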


	\subsection{Notation}
	\begin{itemize}
		\item Let $O_n$ be the orthogonal group, $PL_n$ is the simplicial group of piece-wise linear homeomorphisms of $\mathbb{R}^n$ fixing origin, $Top_n$ is the group of self homeomorphisms of $\mathbb{R}^n$ preserving origin, and $G_n$ denote the topological monoid of self-homotopy equivalences $\mathbb{S}^{n-1}\to \mathbb{S}^{n-1}.$ Denote by $O=\varinjlim~O_n$, $PL= \varinjlim PL_n$, $Top=\varinjlim Top_n,$ and $G=\varinjlim~ G_n$ \cite{kuiperlashof66,lashofrothenberg65,milgram}.
  
	  \item Let $G/O$ be the homotopy fiber of the canonical map $BO\to BG$ between the classifying spaces for stable vector bundles and stable spherical fibrations \cite[\S2 and \S3]{milgram}, and $G/PL$ be the homotopy fiber of the canonical map $BPL\to BG$ between the classifying spaces for stable $PL$-microbundles and stable spherical fibrations \cite{rudyak15}. Again, $Top/O$ is the homotopy fiber of the map $BO\to BTop$ between the classifying spaces for stable vector bundles and stable microbundles \cite[Theorem 10.1 Essay IV]{kirby}.
		\item  There are standard fiber sequences $\cdots\to\Omega G/Top\xrightarrow{w} Top/O\xrightarrow{\psi} G/O\xrightarrow{\phi} G/Top$ \cite{kirby} and $O\xrightarrow{\Omega \widetilde{J}}G\xrightarrow{j}G/O\xrightarrow{i}BO\xrightarrow{\widetilde{J}}BG$ \cite{gw} where $J=\widetilde{J}_*:\pi_n(O)\to\pi_n^s$ is the classical $J$-homomorphism.
		\item We write $X_{(p)}$ for localization of the space $X$ at prime $p.$ We use the notation $f_{(p)}$ when a map $f$ is localized at $p$ \cite{DPS05}. 
		
		\item $M(G)$ denotes the Moore spectrum associated to an abelian group $G$. %\textcolor{red}{$M(\mathbb{Z}/b)$ denotes the Moore spectrum whose n-th space is $\Sigma^n M(\mathbb{Z}/b)= \mathbb{S}^n\cup_{b}\mathbb{D}^{n+1}$ with attaching map of degree $b$.}

            \item $f_M: M\to \mathbb{S}^n$ denotes the degree one map for an oriented $n$-manifold $M$.
           \item $\mathcal{E}(M)$ denotes the group of all homotopy classes self-homotopy equivalences of $M$.
           \item The notation $\{ -, - \}$ is used to denote the stable homotopy classes of maps between spectra.
	\end{itemize}
    
\subsection{Organization} In section \S \ref{aboutconcor}, we discuss the homotopy decomposition of $\mathcal{C}(M\times\mathbb{S}^k)$ and present some results related to $I_c(M\times\mathbb{S}^k)$. In section \S \ref{conlow}, we provide a stable decomposition of a closed, oriented, smooth $4$-manifold $M$, based on whether it admits a spin or non-spin structure. Using these decompositions, we compute both the concordance inertia group and the concordance smooth structure set of $M\times\mathbb{S}^k$ for $1\leq k\leq 10$. In section \S \ref{5dim}, we determine the stable homotopy type of a simply connected, closed, oriented, smooth $5$-manifold $M$ and extend our calculations to the product manifold $M\times\mathbb{S}^k$. Finally, in section \S \ref{application}, we apply our calculations to classify all manifolds that are homeomorphic to $\mathbb{C}P^2\times\mathbb{S}^k,$ where $4\leq k\leq 6$, up to diffeomorphism.
 
\subsection{Acknowledgements} 
The third author expresses gratitude to Douglas C. Ravenel and Diarmuid Crowley for their valuable help in clarifying doubts. The authors also thank the referee for insightful comments and suggestions that improved the clarity and presentation of the paper.

%The third author expresses gratitude to Douglas C. Ravenel and Diarmuid Crowley for their valuable help in clarifying doubts. 
	
  \section{ Concordance inertia groups of the product manifolds \texorpdfstring{$M\times\mathbb{S}^k$}{}}\label{aboutconcor}
We start by recalling some facts about the Kervaire-Milnor group $\Theta_m$.
\begin{defn}\cite{milnor}
 \begin{itemize}
     \item[(a)] A homotopy $m$-sphere $\Sigma^m$ is an oriented smooth closed manifold homeomorphic to the standard unit sphere $\mathbb{S}^m$ in $\mathbb{R}^{m+1}.$
     \item[(b)] A homotopy $m$-sphere $\Sigma^m$ is said to be exotic if it is not diffeomorphic to $\mathbb{S}^m$.
     \item[(c)] Two homotopy $m$-spheres $\Sigma_{1}^m$ and $\Sigma_{2}^m$ are said to be equivalent if there exists an orientation preserving diffeomorphism $f :\Sigma_{1}^m\to \Sigma_{2}^m$.
 \end{itemize}
 The set of equivalence classes of homotopy $m$-spheres is denoted by  $\Theta_m$. Each $\Theta_m~(m\neq 4)$ is a finite abelian group under the connected sum operation.
 \end{defn}
 Assume $m\geq 5$ and consider the obvious forgetting information map $\mathcal{C}(\mathbb{S}^m)\to \Theta_m$, which is bijective. Observe that the abelian group structure on $\Theta_m$ given by the connected sum agrees with the one given by Kirby-Siebenmann \cite{kirby} via the identification 	$\mathcal{C}(\mathbb{S}^m)=\pi_m(Top/O)$. Note that for a closed, oriented \( m \)-manifold \( N^m \), there exists a degree one map \( f_N : N^m \to N^m/{(N^m-\text{int}(\mathbb{D}^m))}\simeq \mathbb{S}^m \), which is well-defined up to homotopy. %Let $f_N: N^m\to \mathbb{S}^m$  be a degree one map and note that $f_N$ is well-defined up to homotopy. 
 Composition with $f_N$ defines a homomorphism $f_N^*: \pi_m(Top/O)\to [N^m, Top/O].$ In terms of the Kirby-Siebenmann identifications $\Theta_m=\pi_m (Top/O)$ and $\mathcal{C}(N^m)=[N^m,Top/O]$, $f_N^*$ becomes $[\Sigma^m] \to [(N\#\Sigma^m, h_{\Sigma^m})]$, where $h_{\Sigma^m}: N^m\#\Sigma^m \to N^m$ is the canonical homeomorphism. Therefore, the \textit{concordance inertia group} $I_c(N^m)$ can be identified with the kernel $\mathit{ker}\left(\Theta_m\xrightarrow{(f_{N})^*}\mathcal{C}(N^m)\right).$  Specifically, for $N= M\times\mathbb{S}^k$, we have the following:
	\begin{lemma}\label{rmk2.5}
		Let $M$ be a closed, oriented, smooth $n$-manifold and $\Sigma^k f_{M}: \Sigma^k M\to \s^{n+k}$ be the $k$-fold suspension of the degree one map $f_{M}:M\to \s^n.$ Then the concordance inertia group 
		$I_c\left(M\times\s^k\right)$ is equal to $\mathit{ker}\left(\Theta_{n+k}\xrightarrow{(\Sigma^k f_{M})^*}[\Sigma^k M,Top/O]\right)$ for $n + k\geq 5.$
	\end{lemma}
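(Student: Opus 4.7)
The plan is to combine the Kirby--Siebenmann identification of concordance classes with a stable splitting of $M\times\s^k$ and then check that the degree-one map factors through the top cell. In the paragraph preceding the lemma it is already shown that, for a closed smooth $m$-manifold $N$ with $m\geq 5$,
\[
I_c(N)=\ker\bigl(f_N^{*}:\Theta_m\to[N,Top/O]\bigr),
\]
so it will suffice to prove the equality $\ker\bigl(f_{M\times\s^k}^{*}\bigr)=\ker\bigl((\Sigma^k f_M)^{*}\bigr)$.

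First I would record the stable splitting $\Sigma^\infty\s^k_+\simeq \s\vee\s^k$ coming from the retraction onto the added basepoint. Smashing with $\Sigma^\infty M_+$ yields
\[
\Sigma^\infty(M\times\s^k)_+\;\simeq\;\Sigma^\infty M_+\;\vee\;\Sigma^k\Sigma^\infty M_+.
\]
Because $Top/O$ is an infinite loop space \cite{Boardmanvogt}, applying $[-,Top/O]$ turns this wedge decomposition into a direct sum
\[
[M\times\s^k,Top/O]\;\cong\;[M,Top/O]\;\oplus\;[\Sigma^k M,Top/O],
\]
the two summands being cut out respectively by the projection $\pi:M\times\s^k\to M$ and the collapse map $q:M\times\s^k\to M_+\wedge\s^k\simeq\Sigma^k M$.

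Second I would verify the factorization $f_{M\times\s^k}\simeq(\Sigma^k f_M)\circ q$. The restriction of $f_{M\times\s^k}$ to $M\times\{*\}$ is a map from an $n$-dimensional manifold into the $(n+k-1)$-connected sphere $\s^{n+k}$, hence nullhomotopic, so $f_{M\times\s^k}$ descends through $q$ up to homotopy to a map $\Sigma^k M\to\s^{n+k}$. A top-cell argument identifies this descended map with $\Sigma^k f_M$, since the top cell of $M\times\s^k$ is the smash product of the top cells of $M$ and $\s^k$. It then follows that $f_{M\times\s^k}^{*}=q^{*}\circ(\Sigma^k f_M)^{*}$; because $q^{*}$ is the inclusion of the second summand in the decomposition above (and in particular injective), the kernels of $f_{M\times\s^k}^{*}$ and $(\Sigma^k f_M)^{*}$ coincide, yielding the lemma.

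The main obstacle will be the second step, namely pinning down the factorization $f_{M\times\s^k}\simeq(\Sigma^k f_M)\circ q$ up to homotopy and checking that it is compatible with the splitting produced in the first step (so that $q^{*}$ really is the canonical injection into the second summand). Once this compatibility is in place, the equality $I_c(M\times\s^k)=\ker\bigl((\Sigma^k f_M)^{*}\bigr)$ follows formally from the kernel description recalled at the beginning.
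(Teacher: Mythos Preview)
Your argument is correct and closely parallels the paper's: both isolate $[\Sigma^k M, Top/O]$ as a direct summand of $[M\times\s^k, Top/O]$ via the evident splitting, and both reduce to showing that $(f_{M\times\s^k})^*$ factors through this summand as $(\Sigma^k f_M)^*$. The paper handles this last step differently, routing through $\s^n\times\s^k$ by writing $f_{M\times\s^k}\simeq f_{\s^n\times\s^k}\circ(f_M\times\mathrm{Id}_{\s^k})$ and then invoking Schultz \cite{schultz} for the fact that $(f_{\s^n\times\s^k})^*:\Theta_{n+k}\to[\s^n\times\s^k,Top/O]$ is an isomorphism onto the $[\Sigma^k\s^n,Top/O]$ summand; your direct factorization through the collapse map is more self-contained and avoids that citation. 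One small slip to fix: $M_+\wedge\s^k$ is not $\Sigma^k M$ but rather $\Sigma^k M\vee\s^k$, and your nullhomotopy argument on $M\times\{*\}$ only descends the map to the former. To reach $\Sigma^k M$ you should also observe (equally trivially, since $k<n+k$) that the restriction to $\{*\}\times\s^k$ is null, so that $f_{M\times\s^k}$ factors through the quotient by $M\vee\s^k$; this is precisely the cofiber sequence $M\vee\s^k\hookrightarrow M\times\s^k\to\Sigma^k M$ that the paper uses to produce its three-term splitting.
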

	\begin{proof}
		We observe that there is a split short exact sequence along an $H$-group $Y$
		\begin{equation}\label{note2.3}
			0\to[\Sigma^k M,Y]\xrightarrow{p^*}[M\times\s^k,Y]\xrightarrow{i^*}[M\vee\s^k,Y]\to0
		\end{equation}
		which is induced from the cofiber sequence
		$$M\vee\s^k\xhookrightarrow{i}M\times\s^k\xrightarrow{p} M\times\s^k/M\vee\s^k\simeq\Sigma^k M.$$ 
Note that the map $p$ fits into the following homotopy commutative diagram:
\[
\begin{tikzcd}
	{M\times\mathbb{S}^k} & {\Sigma^k M} \\
	{\mathbb{S}^{n+k}.}
	\arrow["p", from=1-1, to=1-2]
	\arrow["{f_{M\times\mathbb{S}^k}}"', from=1-1, to=2-1]
	\arrow["{\Sigma^k f_{M}}", from=1-2, to=2-1]
\end{tikzcd}\]
This, in turn, induces the following commutative diagram on homotopy classes of maps:
\[
\begin{tikzcd}
	{[M\times\mathbb{S}^k, Top/O]} && {[\Sigma^k M, Top/O]} \\
	\\
	{[\mathbb{S}^{n+k}, Top/O].}
	\arrow["{p^*}"', from=1-3, to=1-1]
	\arrow["{f_{M\times\mathbb{S}^k}^*}", from=3-1, to=1-1]
	\arrow["{(\Sigma^k f_{M})^*}"', from=3-1, to=1-3]
\end{tikzcd}\]
Since $p^*$ is injective by \eqref{note2.3}, the conclusion follows.
%  In particular, by using the decomposition (\ref{note2.3}) for $Y=Top/O$, we have the following commutative diagram
%		\begin{center}
%			\begin{tikzcd}
%				{[\Sigma^k\s^n, Top/O]}\arrow[r,"\cong"] \arrow[d,-,double equal sign distance,double] &{\Theta_{n+k}} \arrow[r,"(f_{M \times \mathbb{S}^k})^*"] \arrow[d,"(f_{\mathbb{S}^n \times \mathbb{S}^k})^*"'] & {[M \times \mathbb{S}^k,Top/O]}\\
				
%				{[\Sigma^k \s^n,Top/O]}\arrow{r}& {[\mathbb{S}^n \times \mathbb{S}^k,Top/O]} \arrow[ur,"f_{M}^*\oplus Id_{\mathbb{S}^k}^*\oplus (\Sigma^k f_{M})^*"']
%			\end{tikzcd}
%		\end{center}
 % where the map $(f_{\mathbb{S}^n \times \mathbb{S}^k})^*:\Theta_{n+k}\to[\s^n\times\s^k,Top/O]$ is an isomorphism onto the summand $[\Sigma^k \s^n,Top/O]$ \cite{schultz}. From the right triangle, we deduce that the kernel of the map $(f_{M \times \mathbb{S}^k})^*$ is isomorphic to the kernel of the component $(\Sigma^k f_{M})^*$. This concludes the proof of the lemma.
\end{proof}
By combining Kirby and Siebenmann result and the split short exact sequence (\ref{note2.3}), we obtain the following corollary.
   \begin{cor}\label{concorMtimesSk}
       For any closed, oriented, smooth manifold $M$,
       $$\mathcal{C}(M\times\mathbb{S}^k)\cong [M,Top/O]\oplus\pi_k(Top/O)\oplus[\Sigma^k M,Top/O],$$ where $\mathit{dim}(M)+k\geq 5.$
   \end{cor}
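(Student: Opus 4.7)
The plan is to chain together two standard decompositions. First, since $\dim(M\times\mathbb{S}^k)=n+k\geq 5$, the Kirby--Siebenmann result cited in the introduction gives a bijection
\[
\mathcal{C}(M\times\mathbb{S}^k)\;\cong\;[M\times\mathbb{S}^k,\,Top/O].
\]
So it suffices to decompose the right-hand side as a direct sum of the three claimed summands.

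Second, I would apply the split short exact sequence (\ref{note2.3}) with $Y=Top/O$ (which is an $H$-space, indeed an infinite loop space, so the sequence of pointed homotopy classes into it inherits an abelian group structure and the splitting produces a direct sum):
\[
0\to[\Sigma^k M,\,Top/O]\xrightarrow{p^*}[M\times\mathbb{S}^k,\,Top/O]\xrightarrow{i^*}[M\vee\mathbb{S}^k,\,Top/O]\to 0.
\]
The splitting is induced from the cofiber sequence $M\vee\mathbb{S}^k\hookrightarrow M\times\mathbb{S}^k\to\Sigma^k M$, exactly as used in the proof of Lemma \ref{rmk2.5}. Hence
\[
[M\times\mathbb{S}^k,\,Top/O]\;\cong\;[M\vee\mathbb{S}^k,\,Top/O]\oplus[\Sigma^k M,\,Top/O].
\]

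Finally, the wedge axiom for pointed homotopy classes into an $H$-space gives
\[
[M\vee\mathbb{S}^k,\,Top/O]\;\cong\;[M,\,Top/O]\oplus[\mathbb{S}^k,\,Top/O]\;\cong\;[M,\,Top/O]\oplus\pi_k(Top/O),
\]
and combining the displayed isomorphisms yields the claimed three-fold splitting.

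There is essentially no obstacle here: the whole corollary is a bookkeeping assembly of Lemma \ref{rmk2.5}'s ingredients together with the Kirby--Siebenmann identification. The only point to verify carefully is that the splitting in (\ref{note2.3}) is natural enough to be combined with the wedge decomposition, which is immediate because both splittings come from maps of spaces (the projection $M\times\mathbb{S}^k\to\Sigma^k M$ and the inclusions of the wedge factors) and $Top/O$ is an $H$-space.
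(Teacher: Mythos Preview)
Your proof is correct and follows exactly the paper's own argument: the corollary is stated immediately after the remark ``By combining Kirby and Siebenmann result and the split short exact sequence (\ref{note2.3}), we obtain the following corollary,'' and you have simply spelled out that combination together with the wedge decomposition of $[M\vee\mathbb{S}^k,Top/O]$.
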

 
	Now the next lemma provides a pathway to interpret the concordance inertia group of $M\times\mathbb{S}^k,$ if the stable homotopy type of $M$ is known.

        \begin{lemma}\label{concordanceinertia}
Let $M$ be an $n$-dimensional closed, oriented, and smooth manifold. Suppose $M$ is stably homotopy equivalent to $X\vee Z$, where $X$ is the cofibre of a certain map $f: \mathbb{S}^{n-1} \to \bigvee\limits_{i=1}^w A_i$ with $w\geq 2$, and the dimensions of both  $\bigvee\limits_{i=1}^w A_i$ and $Z$ are less than $n$. Then, for any integer $k\geq 1$ such that $n+k\geq 5,$ the concordance inertia group $I_c(M\times \mathbb{S}^k)$ is isomorphic to $$\mathit{Im}\left(\bigoplus\limits_{i=1}^w [\Sigma^{k+1} A_i,Top/O]\xrightarrow{\bigoplus\limits_{i=1}^w (\Sigma^{k+1} f_i)^*}\pi_{n+k}(Top/O)\right),$$ where $f_i = Pr_i \circ f, 1\leq i\leq w$ are the compositions of $f$ with the projection maps $Pr_i$ from $\bigvee\limits_{i=1}^w A_i $ onto its $i^{th}$ factor.      
        \end{lemma}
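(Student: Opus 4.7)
The plan is to combine Lemma~\ref{rmk2.5} with the exactness of the Puppe cofibre sequence applied to $f$. By Lemma~\ref{rmk2.5}, it suffices to compute the kernel of $(\Sigma^k f_M)^* : \Theta_{n+k} \to [\Sigma^k M, Top/O]$.

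Since $Top/O$ is an infinite loop space, the functor $[-, Top/O]$ is a reduced generalized cohomology theory and therefore depends only on the stable homotopy type of its argument. In particular, the stable equivalence $M \simeq_s X \vee Z$ yields a natural decomposition
$$[\Sigma^k M, Top/O] \cong [\Sigma^k X, Top/O] \oplus [\Sigma^k Z, Top/O].$$
Under this identification, I would show that the degree one map $f_M$ corresponds to the pair $(f_X, 0)$, where $f_X : X \to \mathbb{S}^n$ is the canonical collapse map to the top cell coming from the cofibre sequence. The vanishing of the second coordinate is forced by $\dim Z < n$, which gives $\{Z, \mathbb{S}^n\} = 0$, while $\{X, \mathbb{S}^n\} \cong \mathbb{Z}$ is generated by $f_X$; the degree one condition on $f_M$ then pins it down to $(f_X, 0)$. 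Consequently,
$$\ker\bigl((\Sigma^k f_M)^*\bigr) = \ker\bigl((\Sigma^k f_X)^* : \Theta_{n+k} \to [\Sigma^k X, Top/O]\bigr).$$

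Finally, extending the cofibre sequence to
$$\mathbb{S}^{n-1} \xrightarrow{f} \bigvee_{i=1}^w A_i \xrightarrow{j} X \xrightarrow{f_X} \mathbb{S}^n \xrightarrow{\Sigma f} \bigvee_{i=1}^w \Sigma A_i$$
and applying $[\Sigma^k -, Top/O]$, I obtain the exact portion
$$\bigoplus_{i=1}^w [\Sigma^{k+1} A_i, Top/O] \xrightarrow{\bigoplus_{i=1}^w (\Sigma^{k+1} f_i)^*} \pi_{n+k}(Top/O) \xrightarrow{(\Sigma^k f_X)^*} [\Sigma^k X, Top/O],$$
where I have used the wedge decomposition $[\bigvee_i \Sigma^{k+1} A_i, Top/O] \cong \bigoplus_i [\Sigma^{k+1} A_i, Top/O]$ to split the leftmost map into the components $(\Sigma^{k+1} f_i)^*$ with $f_i = Pr_i \circ f$. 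Exactness then identifies $\ker((\Sigma^k f_X)^*)$ with the image of $\bigoplus_i (\Sigma^{k+1} f_i)^*$, finishing the proof. The only real subtlety is justifying the identification of $f_M$ with $(f_X, 0)$ under the stable splitting; once that is in place, the remainder of the argument is a straightforward consequence of the Puppe cofibre sequence.
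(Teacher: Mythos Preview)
Your proof is correct and follows essentially the same route as the paper: reduce to the kernel of $(\Sigma^k f_X)^*$ via the stable splitting, then identify that kernel with the stated image using the Puppe sequence of $f$. The only cosmetic difference is that the paper chooses an explicit $j$-fold deloop $Y$ of $Top/O$ so that the stable equivalence $M\simeq_s X\vee Z$ becomes an honest homotopy equivalence after suspending by $j$, whereas you invoke directly that $[-,Top/O]$ is a cohomology theory on stable homotopy types; these are equivalent formulations.

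One small point on the step you flagged as the subtlety: the degree one condition only determines the $X$-component of $f_M$ up to sign, i.e.\ as $\pm f_X$ (the paper records this as a map $\lambda$ which is either the identity or the reflection). Of course $\ker((\Sigma^k f_X)^*)=\ker(-(\Sigma^k f_X)^*)$, so this does not affect the conclusion, but strictly speaking you should allow for the sign. Your assertion that $\{X,\mathbb{S}^n\}\cong\mathbb{Z}$ generated by $f_X$ is correct here: from the cofibre sequence one gets that $\{X,\mathbb{S}^n\}$ is the cokernel of $(\Sigma f)^*:\{\Sigma\bigvee A_i,\mathbb{S}^n\}\to\{\mathbb{S}^n,\mathbb{S}^n\}$, and this last map is zero because $f_*=0$ on $H_{n-1}$ (forced by $H_n(M)=\mathbb{Z}$ and $\dim Z<n$).
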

        \begin{proof}
        By assumption, there exists a positive integer $j$ such that for any $l\geq j,$ we have a homotopy equivalence $\phi_{l}:\Sigma^l M \to \Sigma^l X \vee \Sigma^l Z.$ Since $Top/O$ is an infinite loop space, there exists a topological space $Y$ such that $\Omega^j Y=Top/O$. Now by Lemma \ref{rmk2.5}, we have that $$I_c(M\times\mathbb{S}^k)\cong \mathit{ker}\left(\pi_{n+k+j}(Y)\xrightarrow{(\Sigma^{k+j} f_{M})^*} [\Sigma^{k+j} M,Y]\right).$$
        Observe that the $(k+j)$-fold suspension of the degree one map $\Sigma^{k+j} f_M : \Sigma^{k+j} M\to \mathbb{S}^{n+k+j}$ is homotopic to the composition $\lambda \circ \Sigma^{k+j}f_X \circ Pr\circ \phi_{k+j}$, where $Pr: \Sigma^{k+j} X \vee \Sigma^{k+j} Z\to \Sigma^{k+j}X$ is the the projection map, $\Sigma^{k+j}f_X: \Sigma^{k+j}X \to \Sigma^{k+j}\mathbb{S}^{n}$ is the $(k+j)$-fold suspension of the pinch map onto the top cell $f_{X}:X\to \mathbb{S}^{n}$, and $\lambda:\mathbb{S}^{n+k+j}\to \mathbb{S}^{n+k+j}$ is either the identity or the reflection map. This implies that $$I_c(M\times\mathbb{S}^k)\cong \mathit{ker}\left(\pi_{n+k+j}(Y)\xrightarrow{(\Sigma^{k+j} f_X)^*} [\Sigma^{k+j}X,Y]\right).$$ This can be equivalently expressed as the image of the homomorphism $\bigoplus\limits_{i=1}^w (\Sigma^{k+j+1} f_i)^* :\bigoplus\limits_{i=1}^w [\Sigma^{k+j+1} A_i,Y]\to\pi_{n+k+j}(Y),$ by using the long exact sequence induced from the cofiber sequence $\Sigma^{k+j} \mathbb{S}^{n-1} \xrightarrow{\Sigma^{k+j} f} \Sigma^{k+j}(\bigvee\limits_{i=1}^w A_i)\hookrightarrow \Sigma^{k+j} X$ along $Y$. Consequently, the latter group is isomorphic to the image of the map $\bigoplus\limits_{i=1}^w (\Sigma^{k+1}f_i)^* :\bigoplus\limits_{i=1}^w [\Sigma^{k+1} A_i,Top/O]\to \pi_{n+k}(Top/O)$. This completes the proof.
        \end{proof}

        We notice from the above lemma that the calculation of the concordance inertia group of $M\times \mathbb{S}^k$ depends on the stable homotopy type of $M.$ Further, if $M$ has a stable homotopy decomposition $M \simeq X\vee Y$ as described in Lemma \ref{concordanceinertia}, then it suffices to determine the image of each component in the suspension of the top cell attaching map of $X$ at the $Top/O$ level.
	 In order to make explicit computations, we require the knowledge of a few stable homotopy groups and their mapping cones. 
	\[\pi_0^s = \Z,~ \pi_1^s = \Z/2\{\eta \},~\pi_2^s=\Z/2\{\eta^2\},~\pi_3^s=\Z/{24}\{\tau\},~ \pi_4^s=0,~\pi_5^s =0,\]
 where $(\tau)_{(2)}=\nu,$ and $(\tau)_{(3)}=\alpha_1.$
	Let $b\in \Z$ be a non-zero integer. The mapping cone of $b: \s^0 \to \s^0$ in spectra is denoted by the Moore spectrum $M(\Z/b)$. Its homotopy groups in the above degrees are described as  
	\[\pi_0(M(\Z/b)) = \Z/b,~ \pi_1(M(\Z/b)) = \begin{cases}\Z/2\{i \circ\eta \} &\mbox{if $b$ is even}\\ 0 &\mbox{ otherwise,}\end{cases}\]
	\[\pi_2(M(\Z/b))=\begin{cases}\Z/4\{\tilde{\eta}_b\} &\mbox{if $2\mid b$ but $4\nmid b$}\\ \Z/2\{\tilde{\eta}_b\}\oplus \Z/2\{i\circ \eta^2\} &\mbox{if } 4\mid b \\ 0 &\mbox{otherwise,} \end{cases} \] 
		\[\pi_3(M(\Z/b))=\begin{cases} \Z/\gcd(24,b)\{i\circ \tau\}\oplus \Z/2\{\tilde{\eta_b^2}\} &\mbox{if } 2 \mid b \\ \Z/\gcd(24,b)\{i\circ \tau\} &\mbox{if } 2\nmid b,\end{cases}\] 
	\[ ~ \pi_4(M(\Z/b))=\Z/\gcd(24,b)\{\tilde{\tau}\},~\pi_5(M(\Z/b)) =0.\]
	In the above $\tilde{\eta}_{b}$ (respectively $\tilde{\tau}$) refers to a homotopy class which gives $\eta$ (respectively $\tau$) after composition onto $\s^1$. We recall that the Hopf map $\eta$ is detected by $Sq^2$, and that the mapping cone $Cone(\eta)\simeq \Sigma^{-2}\C P^2$. The square of $\eta$, $\eta^2$ is detected by the secondary cohomology operation \cite{harper, tangora}
	\[ \Theta : \mathit{ker}(Sq^2)\cap \mathit{ker}(Sq^1) \Big(\subset  H^i(X;\Z/2)\Big) \to H^{i+3}(X;\Z/2)/{Sq^3 H^i(X;\mathbb{Z}/2)+ Sq^2 H^{i+1}(X;\mathbb{Z}/2)}, \]
	associated to the Adem relation $Sq^2 Sq^2 = Sq^3 Sq^1$ in the Steenrod algebra. 

Additionally, we need to understand the homotopy class of the maps from the Moore spectrum $ M(\mathbb{Z}/{2^t})$ to $ M(\mathbb{Z}/{2^r})$, for some $t, r\in \mathbb{N}$ with $t\leq r$. As noted in \cite[Page 260-261]{SO84}, there exists a canonically defined map $\lambda_{t,r}: M(\mathbb{Z}/{2^t})\to M(\mathbb{Z}/{2^r})$ such that following diagram commutes:
\begin{equation}\label{diagrammoore}
\begin{tikzcd}
{\s^0}\arrow[r,"\times 2^r"]\arrow[d,-,double equal sign distance,double]&{\s^0}\arrow[hookrightarrow]{r}{i^{2^r}_0}&{M(\z/{2^r})}\arrow[r,"q^{2^r}_1"]&{\Sigma\s^0}\arrow[r,"\times 2^r"]\arrow[d,-,double equal sign distance,double]&{\Sigma\s^0}\\
{\s^0}\arrow[r,"\times 2^t"']&{\s^0} \arrow[hookrightarrow]{r}[swap]{i^{2^t}_0}\arrow[u,"\times 2^{r-t}"']&{ M(\z/{2^t})}\arrow[r,"q^{2^t}_1"']\arrow[u,dashed,"\lambda_{t,r}"']&{\Sigma\s^0}\arrow[r,"\times 2^t"']&{\Sigma\s^0.}\arrow[u,"\times 2^{r-t}"']
\end{tikzcd}
\end{equation}
Furthermore, it follows from \cite[Page 261]{SO84} that the induced homomorphism $(\lambda_{t,r})_*: \pi_2(M(\mathbb{Z}/{2^t}))\to \pi_2(M(\mathbb{Z}/{2^r}))$ satisfies 
\begin{equation}\label{Bchirelation}
   (\lambda_{t,r})_*(\tilde{\eta}_{2^t})=\lambda_{t,r}\circ \tilde{\eta}_{2^t}=\tilde{\eta}_{2^r}.
\end{equation}
In addition, \cite[Lemma 7]{SO84} shows that 
\begin{equation*}
    \{M(\mathbb{Z}/{2^t}), M(\mathbb{Z}/{2^r})\}=\begin{cases}
        \mathbb{Z}/4\{\lambda_{1,1}\} &\mbox{if } r=t=1,\\
        \mathbb{Z}/2^{min(r,t)}\{\lambda_{t,r}\}\oplus \mathbb{Z}/2\{i^{2^r}_0\circ \eta \circ q^{2^t}_{1}\} &\mbox{otherwise.}
    \end{cases}
\end{equation*}
\section{Smooth structures on \texorpdfstring{$ M^4 \times \mathbb{S}^k$}{}} \label{conlow}
%%%%%%%%%%%%%%%%%%%%%%%%%%%%%%%%%%%%%%%%%%%%%%%%%%%%%%%%%%%%%%%%%%%%%%%%%%%%%%%%%%%%%%%%%%%%%%%%%%%%%%%%%%%%%%%%%%%%%%%%%%%%%%%%%%%%%%%%%%%%%%%%%%%%%%%%%%%%%%%%%

Let $M$ be a closed, smooth, and oriented 4-manifold. In this section, we first determine the stable homotopy type of $M$. Following this, we compute $I_c(M\times \mathbb{S}^k)$ and $\mathcal{C}(M\times \mathbb{S}^k)$. By applying Poincaré duality and the Universal Coefficient Theorem, we obtain

	\begin{equation}\label{hom} 
		H_l(M;\z) =
		\begin{cases}
			\oplus\mathbb{Z}^{m}\oplus \bigoplus\limits_{j=1}^n\z/{b_j} & l=1\\
			\oplus \mathbb{Z}^{d}\oplus\bigoplus\limits_{j=1}^n\z/{b_j} & l=2\\
			\oplus\mathbb{Z}^{m} & l=3\\
			\mathbb{Z} & l=0,4\\
			0 & \text{otherwise}\\
		\end{cases}       
	\end{equation}
	where $m, d\geq0$ are any integers and each $b_j$ is a prime power. We make certain choices in the above decomposition. Suppose that $u_1,\cdots, u_m$ be a basis of the free part of $H_1(M;\z)$, and $v_1,\cdots, v_d$ be a basis of the free part of $H_2(M;\z)$. Choose a basis $\mu_1,\cdots, \mu_m$ of $H^1(M;\Z)\cong \Z^m$ such that $\langle \mu_i, u_j\rangle=\delta_{ij}$. The basis of $H_3(M;\z)$ is then fixed as $[M]\cap \mu_1,\cdots, [M]\cap \mu_m$. Let $a_i$ denote a generator of the $\Z/b_i$ factor of $H_1(M;\z)$. These come equipped with maps $ \hat{q}_i : M \to K(\mathbb{Z}/b_i, 1) $ such that $ a_i $ maps to $ 1 $ in $ H_1(K(\mathbb{Z}/b_i, 1); \mathbb{Z}) $, while $ a_j $ for $ j \neq i $, and all $ u_j $, map to $ 0 $. %These come equipped with maps $\hat{q_i}:M\to K(\Z/b_i,1)$ such that $a_i$ maps to $1$ in $H_1(K(\Z/b_i,1);\z)$, $a_j(j\neq i)$ maps to $0$ and the $u_j$ map to $0$. 
    Define $\alpha_i \in H^2(M)$ as the image 
	\[1 \in \Z/b_i \cong H^2(K(\Z/b_i,1);\z) \stackrel{\hat{q_i}^\ast}{\to} H^2(M;\z).\] 
	Define $k_i = [M]\cap \alpha_i$. With $\Z/2$-coefficients, define $\nu_i$ to be a dual basis of $v_i$ which evaluates to $0$ on the torsion classes. If $2\mid b_i$, let $c_i\in H_2(M;\Z/2)$ denote a choice of a class such that $\hat{q_i}_\ast(c_i)$ is a generator of $H_2(K(\Z/b_i,1);\Z/2)$. In this case, also define $\chi_i\in H^2(M;\Z/2)$ by the equation 
	\[ \langle \chi_i, k_j\rangle = \delta_{ij}, ~ \langle \chi_i,v_j\rangle = 0, ~ \langle \chi_i, c_j\rangle = 0.\] 
	We thus have for some $r\leq n$, 
	\[ H^2(M;\Z/2) \cong \Z/2\{\nu_1,\cdots, \nu_d, \bar{\alpha_1},\cdots,\bar{\alpha_r},\chi_1,\cdots,\chi_r\},\]
 where $\bar{\alpha_i}$ is the $\pmod{2}$ reduction of the $\alpha_i$. In terms of the cup product $H^2(M;\Z/2)\otimes H^2(M;\Z/2) \to H^4(M;\Z/2) \cong \Z/2$, we have 
	\[\alpha_i \nu_j=0,~ \chi_i\alpha_j=\delta_{ij}.\]
	
	\subsection{The stable homotopy type of an orientable \texorpdfstring{$4$}{}-manifold} \label{stabletypeM^4}
	We use a minimal cell structure on the stable homotopy type of $M$ \cite[\S 4.C]{hatcher}. The $2$-skeleton is then clearly homotopy equivalent to  
	$$M^{(2)}\simeq \bigvee_{i=1}^m \s^1\vee \bigvee_{j=1}^n \left(\Sigma M(\Z/b_j)\vee \s^2\right)\vee \bigvee_{l=1}^d \s^2.$$
	We now consider the attachment of the $3$-cells which are stably a sum of maps onto the wedge summands. We further note that $Sq^2: H^1(M;\z/2)\to H^3(M;\z/2)$ equals to zero. This means that the attaching map does not involve any factor of $\eta$. It follows that the $3$-skeleton has the following stable homotopy type. 
	\begin{equation*}%\label{skeletonM^4}
		M^{(3)}\simeq \bigvee_{i=1}^m \left(\s^1\vee\s^3\right)\vee\bigvee_{j=1}^n(\Sigma M(\z/{b_j})\vee \Sigma^2 M(\z/{b_j}))\vee\bigvee_{l=1}^d\s^2.
	\end{equation*}
	Finally, the stable homotopy type of $M$ is determined by the attaching map $\phi_3: \s^3 \to M^{(3)}$ for the $4$-cell of $M$. The formula for the homology implies that the attaching map becomes trivial after quotienting out the $2$-skeleton. Thus, we consider the factorization of $\phi_3$ as 
	\[\s^3 \stackrel{\phi}{\to} M^{(2)} \to M^{(3)}.\]
	The possibilities for the map $\phi$ are computed via two observations. Applying \cite[Corollary 1.8]{GM68} for $a=b=2$ on a $1$-dimensional class, we have $\Theta : H^1(M;\Z/2) \to H^4(M;\Z/2)$ is $0$. This implies that $\phi$ maps trivially onto the factors given by $\s^1$ and the $0$-cell of $\Sigma M(\Z/b_j)$. Therefore, the non-trivial factors of the map $\phi$ may only be \\
	1) As a non-zero multiple of $\eta$ onto some of the $d$-copies of $\s^2$. \\
	2) As a non-zero multiple of $i\circ\eta$ onto one of the copies of $\Sigma^2 M(\Z/b_j)$. \\
	3) As a non-zero multiple of $\tilde{\eta}_{b_j}$ onto one of the copies of $\Sigma M(\Z/b_j)$. \\
	If any of the cases above occur except when $2 \mid b_j$ but $4\nmid b_j$ and the multiple is $2 \tilde{\eta}_{b_j}$, the operation $Sq^2 : H^2(M;\Z/2) \to H^4(M;\Z/2)$ is non-trivial, which means by Wu's formula that the manifold is not spin. However, this exception cannot occur as $2\tilde{\eta} = i \circ \eta^2$. We also observe that if $2\mid b_j$ but $4\nmid b_j$, the last case cannot arise for a $4$-manifold, as then the composition 
	\[ H^1(M;\Z/2) \stackrel{Sq^1}{\to} H^2(M;\Z/2) \stackrel{Sq^2}{\to} H^4(M;\Z/2)\]
	would be non-trivial. However, this composition maps a class $\alpha \mapsto \alpha^4$, and we then have $Sq^1(\alpha^3)=\alpha^4$, which contradicts $w_1(M)=0$ by Wu's formula. In fact, we will now observe that 3) cannot occur at all and if 2) occurs, 1) will also occur. The latter case is clear as the cohomology classes $\chi_j$ which evaluate non-trivially on the factors corresponding to the $\Sigma^2 M(\Z/b_j)$ factors for $2\mid b_j$ are not integral, and the fact that $w_2(M)$ is the $\pmod{2}$ reduction of an integral class \cite[Page 170]{Sco} and \cite{TeVo}. More explicitly, let $\tilde{w} \in H^2(M;\Z)$ be such that $w_2(M)=\tilde{w} \pmod{2}$. Then, this fits into the following commutative diagram if 1) does not occur. 
    \begin{figure}[htbp]
\centering
\begin{tikzpicture}[
    node distance=1.5cm and 2cm,
    every node/.style={inner sep=1pt},
    mapsto/.style={-{Straight Barb[angle=60:3pt 3]}, dashed},
    hook/.style={right hook-latex},
    thick
]

% Nodes
\node (ExtZ) {$\text{Ext}(H_1(M),\mathbb{Z})$};
\node[right=2.5cm of ExtZ] (H2Z) {$H^2(M;\mathbb{Z})$};
\node[right=2cm of H2Z] (HomZ) {$\text{Hom}(H_2(M),\mathbb{Z})$};

\node[below=1.8cm of ExtZ] (ExtZ2) {$\text{Ext}(H_1(M),\mathbb{Z}/2)$};
\node[below=1.8cm of H2Z] (H2Z2) {$H^2(M;\mathbb{Z}/2)$};
\node[below=1.8cm of HomZ] (HomZ2) {$\text{Hom}(H_2(M),\mathbb{Z}/2),$};

\node[below right=0.05cm and .1cm of H2Z] (wtilde) {$\tilde{w}$};
\node[right=1.9cm of wtilde] (zero1) {$0$};
\node[below=1cm of wtilde] (w2) {$w_2(M)$};
\node[right=1.5cm of w2] (zero2) {$0$};

% Arrows
\draw[->] (ExtZ) -- (H2Z);
\draw[->] (H2Z) -- (HomZ);
\draw[->] (ExtZ) -- (ExtZ2);
\draw[->] (H2Z) -- (H2Z2);
\draw[->] (HomZ) -- (HomZ2);
\draw[->] (ExtZ2) -- (H2Z2);
\draw[->] (H2Z2) -- (HomZ2);

% Element chasing arrows
\draw[|->] (wtilde) -- (zero1);
%\draw[mapsto] (wtilde) -- (H2Z.north east);
\draw[|->] (wtilde) -- (w2);
\draw[|->] (w2) -- (zero2);
\draw[|->] (zero1) -- (zero2);

\end{tikzpicture}
\end{figure}

which implies that the image of $w_2(M)$ in $Hom(H_2(M),\Z/2)$ is $0$.
	
	Suppose that the composite 
	\[ \s^3 \to M^{(3)} \stackrel{q_j}{\to} \Sigma M(\Z/b_j)\]
	equals $\tilde{\eta}$  for some $j$ such that $2\mid b_j$. Viewing unstably, the map $q_j$ yields  $\hat{q_j}:M^{(3)} \to K(\Z/b_j,1)$ which classifies $1\in \Hom(\Z/b_j,\Z/b_j)\subset \Hom(H_1(M),\Z/b_j)$.  As $Sq^2(\alpha)=\alpha^2$ for a degree $2$ class $\alpha$, the assumption implies that the class $\alpha_j \in H^2(M;\Z/2)$ corresponding to the generator of $\Ext(\Z/b_j,\Z/2)\Big(\subset \Ext(H_1(M),\Z/2) \subset H^2(M;\Z/2)\Big)$ satisfies $\alpha_j^2$ is the generator of $H^4(M;\Z/2)\cong \Z/2$. The entire information fits into the following diagram:
	\[\xymatrix{ H^2(K(\Z/b_j,1);\Z/2) \ar[d]^{Sq^2} \ar[r]^-{\hat{q}^\ast}& H^2(M;\Z/2) \ar[d]^{Sq^2} & H^2(M;\Z) \ar[l] \ar[d]^{(~)^2}\\
		H^4(K(\Z/b_j,1); \Z/2) \ar[r]^-{\hat{q}^\ast}  & H^4(M;\Z/2) & {H^4(M;\Z).} \ar[l]}\]
	As $\Ext(\Z/b_j,\Z) \to \Ext(\Z/b_j, \Z/2)$ is surjective, the class $\alpha_j$ lifts to a $b_j$-torsion class in $H^2(M;\Z)$. Therefore, its square is $0$, and thus $Sq^2(\alpha_j)=0$. This rules out the option 3) for an orientable non-spin manifold $M$. 
	
	We conclude that for a non-spin $4$-manifold $M$,  $\phi$ may map by $\eta$ to more than one copy of $\s^2$, as well as the maps of the type (2). However, we may change the wedge sum decomposition of $M^{(3)}$, so that $\phi$ maps to exactly one copy of $\s^2.$ This is done by considering the homotopy class $\s^2 \to M^{(3)}$ as the sum of the non-trivial $\s^2$-factors and the inclusions coming from the $M(\Z/b_j)$-factors of type (2). This means that after such a choice $\phi$ maps by $\eta$ to exactly one copy of $\s^2$.
	
	Therefore, we obtain exactly two cases  (noting that $Cone(\eta : \s^3 \to \s^2) \simeq \C P^2$).
	\begin{enumerate}
		\item[(a)] If the manifold $M$ is spin, then the stable homotopy type of $M$ is 
        \begin{equation}\label{spinM^4}
         M\simeq \s^4\vee  \bigvee_{i=1}^m \left(\s^1\vee\s^3\right)\vee\bigvee_{j=1}^n(\Sigma M(\z/{b_j})\vee \Sigma^2 M(\z/{b_j}))\vee\bigvee_{l=1}^d\s^2 .
        \end{equation}
		\item[(b)] If the manifold $M$ is non-spin, then its stable homotopy type is 
       \begin{equation}\label{nonspinM^4}
           M\simeq \C P^2\vee  \bigvee_{i=1}^m \left(\s^1\vee\s^3\right)\vee\bigvee_{j=1}^n(\Sigma M(\z/{b_j})\vee \Sigma^2 M(\z/{b_j}))\vee\bigvee_{l=1}^{d-1}\s^2 .
       \end{equation}
	
	\end{enumerate}
	
	\subsection{Concordance classes for \texorpdfstring{$M\times\s^k$}{}}%\label{fullconcordanceM^4}
	
	From the above section, we have $M\simeq M^{(3)}\cup_{\phi_3}\mathbb{D}^4$ as an object in the stable homotopy category,  where $\phi_3$ is null homotopic if $M$ is a spin manifold and $\phi_3= i_d\circ\eta$ if $M$ is a non-spin manifold. Here $i_d$ refers to the map which is the inclusion of the $d^{th}$ $\s^2$ factor. This, together with Lemma \ref{concordanceinertia}, allows us to compute $I_c(M\times\mathbb{S}^k)$ by analyzing only the image of $\eta^*:\pi_{k}(Top/O)\to \pi_{k+1}(Top/O).$ In particular, we prove the following lemma.
	
%%%%%%%%%%%%%%%%%%%%%%%%%%%%%%%%%%%%%%%%%%%%%%%%%%%%%%%%%%%%%%%%%%%%%%%%%%%%%%%%%%%%%%%%%%%%%%%%%%%%%%%%%
 We first recall the relevant homotopy groups of the infinite loop spaces $Top/O, G/O$, and $G/Top$, which will be used throughout this article.\\
The homotopy groups of $G/Top$ are given by \cite[Page 422]{Luck_surgery}:
\[\pi_n(G/Top)\cong \begin{cases}
    \mathbb{Z}&\mbox{if }  n \equiv 0 \mod 4, \\
    0 &\mbox{if } n\equiv 1\mod 4,\\
    \mathbb{Z}/2 &\mbox{if } n\equiv 2 \mod 4,\\
    0 &\mbox{if } n\equiv 3\mod 4.
\end{cases}\]
The low-dimensional homotopy groups of $G/O$, as compiled from \cite{Luck_surgery, Sullivan96, Mimura63}, are listed in the following table.
 %   	\begin{center}
  %   \begin{tabular}{|c|c|c|c|c|c|c|c|c|c|c|}
   %  \hline
    %      {$k=$}& 1 & 2 & 3 & 4 & 5 & 6 & 7 & 8 & 9 & 10  \\
   % \hline
   % \multirow{1}{4em}{$\pi_k(G/O)$}& $0$ & $\mathbb{Z}/2$ & $0$ & $\mathbb Z$ & $0$ & $\mathbb{Z}/2$ & $0$& $\mathbb{Z}\oplus\mathbb{Z}/2$& $2\mathbb{Z}/2$& $\mathbb{Z}/6$\\
   % \hline
   % \hline
   % {$k=$}& 11 & 12 & 13 & 14 & 15 & 16 & 17&18 & 19& 20\\
   % \hline
   % \multirow{1}{4em}{$\pi_k(G/O)$} & $0$ & $\mathbb Z$ & $\mathbb{Z}/3$ & $2\mathbb Z/2$ & $\mathbb Z/2$ & $\mathbb{Z}\oplus\mathbb{Z}/2$ & $3\mathbb{Z}/2$& $\mathbb{Z}/2\oplus\mathbb{Z}/8$ & $\mathbb{Z}/2$ & $\mathbb{Z}\oplus \mathbb{Z}/{24}$\\
%    \hline
 %    \end{tabular}
 %\end{center}  

\begin{table}[h]
\centering
\resizebox{\textwidth}{!}{%
\begin{tabular}{|c|*{10}{c}|}
\hline
$k$ & 1 & 2 & 3 & 4 & 5 & 6 & 7 & 8 & 9 & 10 \\
\hline
$\pi_k(G/O)$ & $0$ & $\mathbb{Z}/2$ & $0$ & $\mathbb{Z}$ & $0$ & $\mathbb{Z}/2$ & $0$ & $\mathbb{Z}\oplus\mathbb{Z}/2$ & $2\mathbb{Z}/2$ & $\mathbb{Z}/6$ \\
\hline
\hline
$k$ & 11 & 12 & 13 & 14 & 15 & 16 & 17 & 18 & 19 & 20 \\
\hline
$\pi_k(G/O)$ & $0$ & $\mathbb{Z}$ & $\mathbb{Z}/3$ & $2\mathbb{Z}/2$ & $\mathbb{Z}/2$ & $\mathbb{Z}\oplus\mathbb{Z}/2$ & $3\mathbb{Z}/2$ & $\mathbb{Z}/2\oplus\mathbb{Z}/8$ & $\mathbb{Z}/2$ & $\mathbb{Z}\oplus\mathbb{Z}/24$ \\
\hline
\end{tabular}%
}
\end{table}

We also include the homotopy groups of $Top/O$ as given in \cite{milnor}.
% \begin{center}
 %    \begin{tabular}{|c|c|c|c|c|c|c|c|c|c|c|}
  %   \hline
   %       {$k$}& 1 & 2 & 3 & 4 & 5 & 6 & 7 & 8 & 9 & 10  \\
   % \hline
   % \multirow{1}{4.2em}{$\pi_k(Top/O)$}& $0$ & $0$ & $\mathbb{Z}/2$ & $0$ & $0$ & $0$ & $\mathbb{Z}/{28}$& $\mathbb{Z}/2$& $3\mathbb{Z}/2$& $\mathbb{Z}/6$\\
 %   \hline
  %  \hline
   % {$k$}& 11 & 12 & 13 & 14 & 15 & 16 & 17&18 & 19& 20\\
   % \hline
   % \multirow{1}{5em}{$\pi_k(Top/O)$} & $\mathbb{Z}/{992}$ & $0$ & $\mathbb{Z}/3$ & $\mathbb Z/2$ & $\mathbb Z/2\oplus \mathbb{Z}/{8128}$ & $\mathbb{Z}/2$ & $4\mathbb{Z}/2$& $\mathbb{Z}/2\oplus\mathbb{Z}/8$ & $\mathbb{Z}/{130816}\oplus\mathbb{Z}/2$ & $\mathbb{Z}/{24}$\\
  %  \hline
   %  \end{tabular}
 %\end{center} 

\begin{table}[h]
\centering
\resizebox{\textwidth}{!}{%
\begin{tabular}{|c|*{10}{c}|}
\hline
$k$ & 1 & 2 & 3 & 4 & 5 & 6 & 7 & 8 & 9 & 10 \\
\hline
$\pi_k(Top/O)$ & $0$ & $0$ & $\mathbb{Z}/2$ & $0$ & $0$ & $0$ & $\mathbb{Z}/28$ & $\mathbb{Z}/2$ & $3\mathbb{Z}/2$ & $\mathbb{Z}/6$ \\
\hline
\hline
$k$ & 11 & 12 & 13 & 14 & 15 & 16 & 17 & 18 & 19 & 20 \\
\hline
$\pi_k(Top/O)$ & $\mathbb{Z}/992$ & $0$ & $\mathbb{Z}/3$ & $\mathbb{Z}/2$ & $\mathbb{Z}/2\oplus\mathbb{Z}/8128$ & $\mathbb{Z}/2$ & $4\mathbb{Z}/2$ & $\mathbb{Z}/2\oplus\mathbb{Z}/8$ & $\mathbb{Z}/130816\oplus\mathbb{Z}/2$ & $\mathbb{Z}/24$ \\
\hline
\end{tabular}%
}
\end{table}

 %%%%%%%%%%%%%%%%%%%%%%%%%%%%%%%%%%%%%%%%%%%%%%%%%%%%%%%%%%%%%%%%%%%%%%%%%%%%%%%%%%%%%%%%%%%%%%%%%%%%%%%%%%%%%%%%   
	\begin{lemma}\label{lemma2.17}
		The image of $\eta^*:\pi_{k}(Top/O)\to\pi_{k+1}(Top/O)$ is given in the following table:
		\begin{center}
			\begin{tabular}{|c |c|c|c|c| c|c|c|c|c|c|c|c|c|c|c|c|} 
				\hline
				{$k=$}&1 \text{to} 6 &7 & 8 & 9 & 10 & 11 & 12 & 13 & 14 &15&16&17&18&19\\
				\hline
				\multirow{1}{4em}{$\mathit{Im}(\eta^*)$}&0 &0 & $\z/2$ & $\mathbb{Z}/2$& {$\z/2$} & {$0$} & {$0$} & {$0$} & {$\z/2$} & {$0$} & $\z/2$ & $\z/2\oplus\z/2$ & $\mathbb{Z}/2$ & $0$\\
				\hline
			\end{tabular}
		\end{center}
	\end{lemma}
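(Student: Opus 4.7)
My plan is to treat this as a case-by-case computation organized by the infinite loop space structure on $Top/O$. Since $Top/O$ is an infinite loop space (as recorded in the introduction), its homotopy groups carry a natural $\pi_*^s$-module structure, and $\eta^{\ast}$ coincides with multiplication by $\eta \in \pi_1^s$. For $k \geq 5$ the Kirby--Siebenmann identification $\pi_k(Top/O) \cong \Theta_k$ cited above then turns the question into one about the $\eta$-action on the Kervaire--Milnor groups.

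I would first dispose of the low-dimensional range $k \leq 6$ by noting that in each case either the source or the target of $\eta^{\ast}$ vanishes: $\pi_k(Top/O) = 0$ for $k \leq 3$ by connectivity of $PL/O$ combined with $Top/PL \simeq K(\mathbb{Z}/2,3)$, and $\Theta_5 = \Theta_6 = 0$ handles $k = 4, 5, 6$ (for $k = 4$ via vanishing of the target $\pi_5(Top/O)$). This settles the first block of the table by inspection.

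For $7 \leq k \leq 19$ the plan is to apply the Kervaire--Milnor exact sequence
$$0 \to bP_{k+1} \to \Theta_k \to \Theta_k / bP_{k+1} \to 0$$
together with the injection $\Theta_k / bP_{k+1} \hookrightarrow \operatorname{Coker}\bigl(J\colon \pi_k(SO) \to \pi_k^s\bigr)$, in order to split the computation into a $bP$-summand and a $\operatorname{Coker} J$-summand. On the $\operatorname{Coker} J$ piece I would lift each generator to $\pi_k^s$, multiply by $\eta$ using Toda's tables of the stable stems, and then reduce modulo the image of $J$ in $\pi_{k+1}^s$. The non-zero images at $k = 8, 9, 10$ will come from the classical relations involving $\eta \epsilon$ and $\eta^2 \epsilon$; the contributions at $k = 14, 16, 17$ will track $\eta$-multiples of $\sigma^2$, $\overline{\nu}$ and Kervaire-type classes; and the vanishings at $k = 11, 13, 15, 18, 19$ will be traced to the standard relations $\eta \nu^3 = 0$, $\eta \zeta = 0$, and similar in the stable stems.

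The main obstacle is handling the $bP_{k+1}$-summand, since Milnor's plumbing generator does not lift through the map $\pi_k^s \to \Theta_k$ in the same way. Here I would argue that the $\eta$-multiple of the Milnor generator lands in $bP_{k+2}$ (via a parallelizable-cobordism construction), and then check dimension by dimension that this contribution either dies (as when $bP_{k+2} = 0$) or is absorbed into the $\operatorname{Coker} J$-piece already accounted for. The care needed at $k = 15$ (where $\Theta_{15}$ has both a large $bP_{16}$ and a $\mathbb{Z}/2$ from Coker $J$) and at $k = 17$ (where the image has rank two) is where I expect the delicate bookkeeping to concentrate.
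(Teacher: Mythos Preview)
Your plan is in the same spirit as the paper's one-line proof (which simply cites the stable stem tables in Ravenel and Toda), and the organisation through the Kervaire--Milnor filtration is the natural way to unpack that citation. However, your recipe for the $\operatorname{Coker} J$ summand has a genuine gap, and it bites precisely at $k=10$.

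Your procedure ``lift a generator of $\Theta_k/bP_{k+1}$ to $\pi_k^s$, multiply by $\eta$, and reduce modulo $\operatorname{Im}(J_{k+1})$'' only computes the image of $\eta^\ast$ \emph{after} projecting $\Theta_{k+1}\to\Theta_{k+1}/bP_{k+2}\hookrightarrow\operatorname{Coker} J_{k+1}$. When $bP_{k+2}=0$ this projection is injective and nothing is lost, and indeed for every even $k$ in the range except $k=10$ (and $k=18$, where the answer happens to be zero) that is the situation. But at $k=10$ one has $\Theta_{11}=bP_{12}\cong\Z/992$ and $\operatorname{Coker} J_{11}=0$, so your reduction annihilates everything and would yield $\operatorname{Im}(\eta^\ast)=0$, contradicting the table entry $\Z/2$. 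The order-$2$ generator of $\Theta_{10}$ (detected by $\eta\mu$) has $\eta$-multiple equal to the element of order $2$ in $bP_{12}$; detecting this requires an argument that does not factor through $\operatorname{Coker} J_{11}$, for instance a diagram chase in the $\eta$-equivariant long exact sequence of the fibration $\Omega(G/Top)\to Top/O\to G/O$, or an $e$-invariant computation. Your separate treatment of the $bP$-summand (where $\eta\cdot bP_{k+1}\subset bP_{k+2}$ is correct, since $\Theta_\ast\to\operatorname{Coker} J_\ast$ is a $\pi_\ast^s$-module map) does not help here, because at $k=10$ the source has $bP_{11}=0$: the missed contribution crosses from the $\operatorname{Coker} J$-part of the source to the $bP$-part of the target.

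Two smaller corrections to your bookkeeping: at $k=14$ the generator of $\Theta_{14}\cong\Z/2$ is the class detected by $\kappa$, not $\sigma^2$ (the latter has Kervaire invariant one and does not lie in the image of $\Theta_{14}\to\operatorname{Coker} J_{14}$); and at $k=16$ the relevant class in $\pi_{16}^s$ is the element conventionally written $\eta^\ast$, not $\bar{\nu}\in\pi_8^s$. The paper's own remark following the lemma records the generators as $\eta\epsilon$, $\eta\mu$, $\eta\kappa$, $\eta\eta^\ast$, $4\nu^\ast$, $\eta\bar{\mu}$.
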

	\begin{proof}
Since \(\pi_k(Top/O)\) vanishes for all \(1\leq k\leq 6\) with \(k \neq 3\), and also for \(k=12\), the result follows for \(k=1,2,3,4,5,6,11,\) and \(12\).

As \(\pi_{13}(Top/O)_{(2)}\) is zero, the map \(\eta_{13}^* : \pi_{13}(Top/O) \to \pi_{14}(Top/O)\) is trivial.
%%%%%%%%%%%%%%%%%%%%%%%%%%%%%%%%%%%%%%%%%%%%%%%%%%%%%%%%%%%%%%%%%%%%%%%%%%%%%%%%%%%%%%%%%%%%%%%%%%%%%%%%%%%%%%%%%%%%%%%%%%%%%%%%%%%%%%%%%%%%%%%%%%
%\textcolor{blue}{Moreover, for $k=9, 15, 17, 19$, we note from the following commutative diagram that} 
%\[\color{blue}\begin{tikzcd}
%	{\pi_{k+1}(G/Top)} & {\pi_{k+2}(G/Top)=0} \\
%	{\pi_{k}(Top/O)} & {\pi_{k+1}(Top/O)}
%	\arrow["{\eta_{k+1}^*}", from=1-1, to=1-2]
%	\arrow["{\omega_*}"', from=1-1, to=2-1]
%	\arrow["{\omega_*}", from=1-2, to=2-2]
%	\arrow["{\eta_{k}^*}"', from=2-1, to=2-2]
%\end{tikzcd}\]
%\textcolor{blue}{the restriction of $\eta_k^*:\pi_k(Top/O)\to \pi_{k+1}(Top/O)$ to the subgroup $\omega_*(\pi_{k+1}(G/Top))$ is trivial. Hence, it suffices to find the image of the map $\eta_k^*:\pi_k(Top/O)\to \pi_{k+1}(Top/O)$ restricted to $\pi_k(Top/O)-\omega_*(\pi_{k+1}(G/Top))$ for these $k$.}
%%%%%%%%%%%%%%%%%%%%%%%%%%%%%%%%%%%%%%%%%%%%%%%%%%%%%%%%%%%%%%%%%%%%%%%%%%%%%%%%%%%%%%%%%%%%%%%%%%%%%%%%%%%%%%%%%%%%%%%%%%%%%%%%%%%%%%%%%%%%%%%%%%%%%%%%%%%%%%%%

Consider the following commutative diagram:
\begin{equation}\label{commutative_diagram}
  \begin{tikzcd}
	{\pi_k(Top/O)} && {\pi_{k+1}(Top/O)} \\
	\\
	{\pi_k(G/O)} && {\pi_{k+1}(G/O)} \\
	\\
	{\pi_k^s} && {\pi_{k+1}^s.}
	\arrow["{\eta_k^*}", from=1-1, to=1-3]
	\arrow["{\psi_*}"', from=1-1, to=3-1]
	\arrow["{\psi_*}", from=1-3, to=3-3]
	\arrow["{\eta_k^*}"', from=3-1, to=3-3]
	\arrow["{j_*}", from=5-1, to=3-1]
	\arrow["{\eta_k^*}"', from=5-1, to=5-3]
	\arrow["{j_*}"', from=5-3, to=3-3]
\end{tikzcd}
\end{equation}
Since $\pi_7(G/O)=0$ and $\psi_*: \pi_8(Top/O)\to\pi_8(G/O)$ is injective, it follows from the commutative diagram above that the map $\eta_7^*:\pi_7(Top/O)\to\pi_8(Top/O)$ is trivial.

For $k=8$, we note from \cite[Page 189]{toda} that the image of $\eta_8^*:\pi_8^s\to \pi_9^s$ is $\mathbb{Z}/2\{\nu^3\}\oplus \mathbb{Z}/2\{\eta\epsilon\}$. Since $j_*(\Bar{\nu})=j_*(\epsilon)$ in $\pi_8(G/O)$ \cite{ravenel}, we obtain from the lower rectangle of the diagram \eqref{commutative_diagram} that the image of the map $\eta_8^*: \pi_8(G/O)\to\pi_9(G/O)$ restricted to the torsion part of $\pi_8(G/O)$ is $\mathbb{Z}/2$. Now, the result follows from the top rectangle \eqref{commutative_diagram} as $j_*(\pi_8^s)=\psi_*(\pi_8(Top/O))$.

%Since \(\pi_8(Top/O)\) is isomorphic to the torsion part of \(\pi_8(G/O)\), the conclusion is obtained from the top rectangle \eqref{commutative_diagram}.
%\textcolor{blue}{For $k=9$, the element $\mu\in \pi_9^s$ maps to the element $\eta\mu\in \pi_{10}^s$ under the map $\eta_9^*$. Since $j_*(\mu)$ is non-zero in $\pi_9(G/O)$, and both the maps $\psi_*:\pi_{10}(Top/O)\to \pi_{10}(G/O)$ and $ j_*:\pi_{10}^s\to \pi_{10}(G/O)$ are isomorphisms, it follows from the commutavity of diagram \eqref{commutative_diagram} that the image of $\eta_9^*:\pi_9(Top/O)\to \pi_{10}(Top/O)$ is isomorphic to $\mathbb{Z}/2$, generated by the element corresponding to $\eta\mu\in \pi_{10}^s$.}

For \( k = 9 \), the element \( \mu \in \pi_9^s \) maps to \( \eta\mu \in \pi_{10}^s \) under the map \( \eta_9^* \). The image of \( \mu \) under \( j_* : \pi_9^s \to \pi_9(G/O) \) is nonzero \cite[Theorem 1.1.14]{ravenel}, and the map \( \psi_* : \pi_9(Top/O) \to \pi_9(G/O) \) is onto. Furthermore, both \( \psi_* : \pi_{10}(Top/O) \to \pi_{10}(G/O) \) and \( j_* : \pi_{10}^s \to \pi_{10}(G/O) \) are isomorphisms. It then follows from the commutativity of diagram~\eqref{commutative_diagram} that the image of \( \eta_9^* : \pi_9(Top/O) \to \pi_{10}(Top/O) \) is isomorphic to \( \mathbb{Z}/2 \), generated by the element corresponding to \( \eta\mu \in \pi_{10}^s \).

We now turn to the case $k=10$. Since $Top/PL\simeq K(\mathbb{Z}/2,3)$ \cite[Page 198]{kirby}, the natural map $PL/O\to Top/O$ induces isomorphism on the homotopy groups in degree $i\geq 4$. Hence to compute the image of the map $\eta_{10}^*: \pi_{10}(Top/O)\to \pi_{11}(Top/O)$, it suffices to determine the image of $\eta_{10}^*: \pi_{10}(PL/O)\to \pi_{11}(PL/O)$. Since $\pi_{10}(PL/O)=\mathbb{Z}/6$ and $\pi_{11}(PL/O)=\mathbb{Z}/{992}$, the image of $\eta_{10}^*: \pi_{10}(PL/O)\to \pi_{11}(PL/O)$ is atmost $\mathbb{Z}/2$. We claim that the image of the map $\eta_{10}^*: \pi_{10}(PL/O)\to \pi_{11}(PL/O)$ is exactly $\mathbb{Z}/2$. To prove this, we consider the following commutative diagram  with exact rows induced from the fiber sequence $PL/O\to BO\to BPL$
\[
\begin{tikzcd}
    &{0}\arrow[r]\arrow[d]& {\pi_{10}(PL)}\arrow[d,"\eta_{10}^*"']\arrow[r,"\cong"]&{\pi_{10}(PL/O)}\arrow[r,"0"]\arrow[d,"\eta_{10}^*"]& {\pi_{10}(BO)}\arrow[d]\\
    {0}\arrow[r]&{\pi_{12}(BO)}\arrow[r]&{\pi_{11}(PL)}\arrow[r,"\beta"']&{\pi_{11}(PL/O)}\arrow[r]&{0,}
\end{tikzcd}\]
where $\pi_{10}(PL)\cong \pi_{10}(PL/O)$ \cite{Hirch,hirsch} and $\pi_{11}(PL)\cong \mathbb{Z}\oplus\mathbb{Z}/8$ \cite[Theorem 1.4]{gw2}. Since the restriction of \( \beta : \pi_{11}(PL) \to \pi_{11}(PL/O) \) to the torsion subgroup of $\pi_{11}(PL)$ is an inclusion \cite[Page 307]{gw2}, it follows from the diagram that the image of \( \eta_{10}^* : \pi_{10}(PL/O) \to \pi_{11}(PL/O) \) is \( \mathbb{Z}/2 \) if and only if the image of \( \eta_{10}^* : \pi_{10}(PL) \to \pi_{11}(PL) \) is \( \mathbb{Z}/2 \). The map $\eta_{10}^*:\pi_{10}(PL)\to \pi_{11}(PL)$ fits into the following commutative diagram:
%\textcolor{red}{Hence it suffices to show $\eta_{10}^*:\pi_{10}(PL)\to\pi_{11}(PL)$ is non-zero map from the above diagram. To this end, consider the following commutative diagram:}
\[\begin{tikzcd}
    &{0}\arrow[r]&{\pi_{10}(PL)}\arrow[r,"\cong"]\arrow[d,"\eta_{10}^*"']&{\pi_{10}^s}\arrow[r]\arrow[d,"\eta_{10}^*"]&{\pi_{10}(G/PL)}\\
    {0}\arrow[r]&{\pi_{12}(G/PL)}\arrow[r]&{\pi_{11}(PL)}\arrow[r]&{\pi_{11}^s}\arrow[r]&{0,}
\end{tikzcd}\]
where the rows are induced from the fibration $G/PL\to BPL\to BG$; the isomorphism $\pi_{10}(PL)\cong \pi_{10}^s$ holds since $\pi_{10}(PL)\cong \pi_{10}(PL/O)$. %and the restriction of the map $\pi_{11}(PL)\to \pi_{11}^s$ to the $\mathbb{Z}/8$ summand is isomorphic to $(\pi_{11}^s)_{(2)}$ \cite[Theorem 4.6]{gw2}. 
Since the image of $\eta_{10}^*: \pi_{10}^s\to \pi_{11}^s$ is $\mathbb{Z}/2$ \cite{toda}, the above diagram shows that the map $\eta_{10}^*:\pi_{10}(PL)\to \pi_{11}(PL)$ has image $\mathbb{Z}/2$. This completes the proof for $k=10$.
%%%%%%%%%%%%%%%%%%%%%%%%%%%%%%%%%%%%%%%%%%%%%%%%%%%%%%%%%%%%%%%%%%%%%%%%%%%%%%%%%%%%%%%%%%%%%%%%%%%%%%%%%%%%%%%%%%%%%%%%%%%%%%%%%%%%%%%%%%%%%%%%%%%%%%%%%%%%%%%%  
%For \( k = 14 \), it follows from \cite[pp.~189--190]{toda} that the generators \( \kappa \) and \( \sigma^2 \) of \( \pi_{14}^s \) are mapped under \( \eta_{14}^* \) to \( \eta\kappa \) and \( 0 \), respectively. Moreover, $j_*:\pi_{14}^s\to \pi_{14}(G/O)$ is an isomorphism as the group $\pi_{15}(BO)$ is trivial and $\pi_{14}(G/O)=\mathbb{Z}/2\oplus\mathbb{Z}/2$; and $j_*:\pi_{15}^s\to \pi_{15}(G/O)=\mathbb{Z}/2$ maps $\eta\kappa$ non-trivially \cite[Theorem 1.1.14]{ravenel}. Combining these we obtain from the lower part of the diagram \eqref{commutative_diagram} that the image of $\eta_{14}^*:\pi_{14}(G/O)\to \pi_{15}(G/O)$ is $\mathbb{Z}/2$. Now, since $\psi_*:\pi_{14}(Top/O)\to \pi_{14}(G/O)$ is injective and the exotic sphere in $\pi_{14}(Top/O)$ corresponds to the generator $\kappa$ or $\kappa + \sigma^2$ in $\pi_{14}^s$ \cite[Page 45]{kubo}, the upper part of the diagram \eqref{commutative_diagram} implies $\eta_{14}^*:\pi_{14}(Top/O)\to \pi_{15}(Top/O)$ is a monomorphism, hence its image is $\mathbb{Z}/2$.

For \( k = 14 \), it follows from \cite[Page 189-190]{toda} that the generators \( \kappa \) and \( \sigma^2 \) of \( \pi_{14}^s \) are mapped under \( \eta_{14}^* \) to \( \eta\kappa \) and \( 0 \) in $\pi_{15}^s$, respectively. Moreover, the map \( j_* : \pi_{14}^s \to \pi_{14}(G/O) \) is an isomorphism, since \( \pi_{15}(BO) = 0 \) and \( \pi_{14}(G/O) \cong \mathbb{Z}/2 \oplus \mathbb{Z}/2 \); additionally, \( j_* : \pi_{15}^s \to \pi_{15}(G/O) \cong \mathbb{Z}/2 \) sends \( \eta\kappa \) to the generator of $\pi_{15}(G/O)$ \cite[Theorem~1.1.14]{ravenel}. Combining these facts, the lower part of diagram~\eqref{commutative_diagram} shows that the image of \( \eta_{14}^* : \pi_{14}(G/O) \to \pi_{15}(G/O) \) is \( \mathbb{Z}/2 \). Now, since \( \psi_* : \pi_{14}(Top/O) \to \pi_{14}(G/O) \) is injective, and the exotic sphere in \( \pi_{14}(Top/O) \) corresponds to either \( \kappa \) or \( \kappa + \sigma^2 \) in \( \pi_{14}^s \) \cite[Page~45]{kubo}, it follows from the upper part of diagram~\eqref{commutative_diagram} that \( \eta_{14}^* : \pi_{14}(Top/O) \to \pi_{15}(Top/O) \) is a monomorphism. Hence, its image is isomorphic to \( \mathbb{Z}/2 \).\\
%or
%\textcolor{teal}{Next, consider the case $k=14$. Observing from \cite{levine}, we see that $\eta_{14}^*:\pi_{14}(Top/O)\to \pi_{15}(Top/O)$ is a monomorphism, hence its image is $\mathbb{Z}/2$.}
%%%%%%%%%%%%%%%%%%%%%%%%%%%%%%%%%%%%%%%%%%%%%%%%%%%%%%%%%%%%%%%%%%%%%%%%%%%%%%%%%%%%%%%%%%%%%%%%%%%%%%%%%%%%%%%%%%%%%%%%%%%%%%%%%%%%%%%%%%%%%%%%%%%%%%%%%%%%%%%%
%For $k=15$, we observe that the generator $\eta\kappa$ of $\pi_{15}^s$ is mapped to zero in $\pi_{16}^s$ under $\eta_{15}^*$ \cite[Theorem 14.1]{toda}, while it is mapped to the generator of $\pi_{15}(G/O)=\mathbb{Z}/2$ under $j_*$ \cite[Theorem 1.1.14]{ravenel}. Therefore, from the lower part of the commutative diagram \eqref{commutative_diagram}, we obtain that $\eta_{15}^*:\pi_{15}(G/O)\to \pi_{16}(G/O)$ is a zero map. Using this, we get from the upper part of the diagram \eqref{commutative_diagram} that $\eta_{15}^*:\pi_{15}(Top/O)\to \pi_{16}(Top/O)$ is a trivial map, as the maps  $\psi_*:\pi_{15}(Top/O)\to \pi_{15}(G/O)$ and $\psi_*:\pi_{16}(Top/O)\to \pi_{16}(G/O)$ are surjective and injective, respectively. 
For $k=15$, the generator $\eta\kappa \in \pi_{15}^s$ maps to zero in $\pi_{16}^s$ under $\eta_{15}^*$ \cite[Theorem 14.1]{toda}, but maps to the generator of $\pi_{15}(G/O)=\mathbb{Z}/2$ under the map $j_*$ \cite[Theorem 1.1.14]{ravenel}. Therefore, from the lower part of the commutative diagram \eqref{commutative_diagram}, we obtain that $\eta_{15}^*:\pi_{15}(G/O)\to \pi_{16}(G/O)$ is the zero map. Since $\psi_*:\pi_{15}(Top/O)\to \pi_{15}(G/O)$ is surjective and $\psi_*:\pi_{16}(Top/O)\to \pi_{16}(G/O)$ is injective, the commutativity of the upper row of the diagram then implies that the map $\eta_{15}^*:\pi_{15}(Top/O)\to \pi_{16}(Top/O)$ is also trivial.

%\textcolor{blue}{Since $\psi_*:\pi_{15}(Top/O)\to \pi_{15}(G/O)$ and $\psi_*:\pi_{16}(Top/O)\to \pi_{16}(G/O)$ are surjective and injective respectively, the map $\eta_{15}^*:\pi_{15}(Top/O)\to \pi_{16}(Top/O)$ is zero if $\eta_{15}^*:\pi_{15}(G/O)\to\pi_{16}(G/O)$ is zero, by the upper part of the diagram \eqref{commutative_diagram}. Since $j_*(\eta\kappa)\neq0$ in $\pi_{15}(G/O)$ \cite{ravenel} and $\eta^2\kappa =0$ \cite[Theorem 14.1]{toda}, the image of the map $\eta_{15}^*:\pi_{15}(G/O)\cong \mathbb{Z}/2 \to\pi_{16}(G/O)$ is zero by the lower rectangle of \eqref{commutative_diagram}.} 
%%%%%%%%%%%%%%%%%%%%%%%%%%%%%%%%%%%%%%%%%%%%%%%%%%%%%%%%%%%%%%%%%%%%%%%%%%%%%%%%%%%%%%%%%%%%%%%%%%%%%%%%%%%%%%%%%%%%%%%%%%%%%%%%%%%%%%%%%%%%%%%%%%%%%%%%%%%%%%%%%%%%%%%%%%%%
%\textcolor{blue}{Since \(\pi_{16}(Top/O) \cong \pi_{16}^s / \mathit{Im}(J)\), with \(\eta \rho \in \mathit{Im}(J)\) and \(\eta \eta^* \in \pi_{17}^s / \mathit{Im}(J)\) \cite[Theorem 1.1.13]{ravenel}, the image of \(\eta_{16}^* : \pi_{16}(Top/O) \to \pi_{17}(Top/O)\) is \(\mathbb{Z}/2\).}

Now we consider the case $k=16$. Note that the map $\eta_{16}^*$ sends the generator $\eta^*$ of $\pi_{16}^s$ to the generator $\eta\eta^*$ of $\pi_{17}^s$ \cite[Page 189-190]{toda}. Since $j_*(\eta^*)$ generates the torsion part of $\pi_{16}(G/O)$, and $j_*(\eta\eta^*)\neq 0$ in $\pi_{17}(G/O)$ \cite[Table A3.3]{ravenel}, it follows from the lower part of the diagram \eqref{commutative_diagram} that the restriction of \( \eta_{16}^* : \pi_{16}(G/O) \to \pi_{17}(G/O) \) to the torsion subgroup of $\pi_{16}(G/O)$ has image \( \mathbb{Z}/2 \). %the image of the map $\eta_{16}^*:\pi_{16}(G/O)\to \pi_{17}(G/O)$ restricted to the torsion part of $\pi_{16}(G/O)$ is $\mathbb{Z}/2$. 
Since the map $\psi_*$ maps $\pi_{16}(Top/O)$ injectively to the torsion part of $\pi_{16}(G/O)$, the upper part of the diagram \eqref{commutative_diagram} implies that the image of \(\eta_{16}^*: \pi_{16}(Top/O) \to \pi_{17}(Top/O)\) is also \(\mathbb{Z}/2\).

%%%%%%%%%%%%%%%%%%%%%%%%%%%%%%%%%%%%%%%%%%%%%%%%%%%%%%%%%%%%%%%%%%%%%%%%%%%%%%%%%%%%%%%%%%%%%%%%%%%%%%%%%%%%%%%%%%%%%%%%%%%%%%%%%%%%%%%%%%%%%%%%%%%%%%%%%%%
%We observe that $\pi_{17}(Top/O)\cong \mathbb{Z}/2\oplus \pi_{17}^s/{\mathit{Im}(J)}$, $\pi_{18}(Top/O)\cong \pi_{18}^s/{\mathit{Im}(J)}\cong \mathbb{Z}/8\oplus\mathbb{Z}/2$. Since $\psi_*\vert_{\pi_{17}^s/{\mathit{Im}(J)}}: \pi_{17}^s/{\mathit{Im}(J_{17})}\to \pi_{17}(G/O)$ and $\psi_*:\pi_{18}(Top/O)\to \pi_{18}(G/O)$ are isomorphisms, the maps $\eta_{17}^*:\pi_{17}(Top/O)\to \pi_{18}(G/O)$ and $\eta_{17}^*:\pi_{17}^s/{\mathit{Im}(J)}\to \pi_{18}^s/{\mathit{Im}(J)}$ have the same image. Now, by \cite[Theorem 1.1.13]{ravenel} and \cite[Theorem 14.1]{toda}, the image of $\eta_{17}^*:\pi_{17}^s/{\mathit{Im}(J)}\to \pi_{18}^s/{\mathit{Im}(J)}$ is $\mathbb{Z}/2\{[4\nu^*]\}\oplus\mathbb{Z}/2\{[\eta\Bar{\mu}]\}$, thereby determining the image of $\eta_{17}^*:\pi_{17}(Top/O)\to \pi_{18}(Top/O)$.

For $k=17$, we note that the map $j_*: \pi_{17}^s\to \pi_{17}(G/O)$ is onto and its image is generated by $j_*(\eta\eta^*), j_*(\nu\kappa), j_*(\bar{\mu})$ \cite[Table A3.3 and Theorem 1.1.13]{ravenel}, where $\pi_{17}^s=\mathbb{Z}/2\{\eta\eta^*\}\oplus \mathbb{Z}/2\{\nu\kappa\}\oplus \mathbb{Z}/2\{\bar{\mu}\}\oplus\mathbb{Z}/2\{\eta^2\rho\}$ are the generators. Since $\eta^2\eta^*=4\nu^*, \eta \nu\kappa=0$ \cite[Theorem 14.1]{toda}, the image of the map $\eta_{17}^*:\pi_{17}^s\to \pi_{18}^s$, restricted to the subgroup $\mathbb{Z}/2\{\eta\eta^*\}\oplus\mathbb{Z}/2\{\nu\kappa\}\oplus\mathbb{Z}/2\{\bar{\mu}\}$, is $\mathbb{Z}/2\{4\nu^*\}\oplus \mathbb{Z}/2\{\eta\bar{\mu}\}$. As \( j_* : \pi_{18}^s \to \pi_{18}(G/O) \) is an isomorphism, the lower part of diagram~\eqref{commutative_diagram} implies that the image of \( \eta_{17}^* : \pi_{17}(G/O) \to \pi_{18}(G/O) \) is \( \mathbb{Z}/2 \oplus \mathbb{Z}/2 \). Since \( \psi_* : \pi_{17}(Top/O) \to \pi_{17}(G/O) \) is surjective and \( \psi_* : \pi_{18}(Top/O) \to \pi_{18}(G/O) \) is an isomorphism, the upper part of the diagram shows that \( \eta_{17}^* : \pi_{17}(Top/O) \to \pi_{18}(Top/O) \) also has image \( \mathbb{Z}/2 \oplus \mathbb{Z}/2 \).

Next, consider the case \( k = 18 \). Since \( \pi_i(Top/O) \cong \pi_i(PL/O) \) for all \( i \geq 6 \), computing the image of \( \eta_{18}^*: \pi_{18}(Top/O) \to \pi_{19}(Top/O) \) reduces to computing the image of \( \eta_{18}^*: \pi_{18}(PL/O) \to \pi_{19}(PL/O) \). This map appears in the following commutative diagram:
\[\begin{tikzcd}
	& 0 & {\pi_{18}(PL)} & {\pi_{18}(PL/O)} & 0 \\
	0 & {\pi_{19}(O)\cong \mathbb{Z}} & {\pi_{19}(PL)} & {\pi_{19}(PL/O)} & {0.}
	\arrow[from=1-2, to=1-3]
	\arrow["\cong", from=1-3, to=1-4]
	\arrow["{\eta_{18}^*}"', from=1-3, to=2-3]
	\arrow[from=1-4, to=1-5]
	\arrow["{\eta_{18}^*}", from=1-4, to=2-4]
	\arrow[from=2-1, to=2-2]
	\arrow[from=2-2, to=2-3]
	\arrow["\beta"',from=2-3, to=2-4]
	\arrow[from=2-4, to=2-5]
\end{tikzcd}\]
Here, the rows are induced by the fiber sequence \( PL/O \to BO \to BPL \); the map \( \pi_{18}(PL) \to \pi_{18}(PL/O) \) is surjective \cite[Page 291]{gw2}; and \( \pi_{18}(PL) \cong \pi_{18}(PL/O) \) since \( \pi_{18}(O) = 0 \). Moreover, \( \pi_{19}(PL) \cong \mathbb{Z} \oplus \mathbb{Z}/2 \oplus \mathbb{Z}/8 \) \cite[Theorem 1.4]{gw2}, and as noted in \cite[Page 307]{gw2}, the restriction of \( \beta: \pi_{19}(PL)\to \pi_{19}(PL/O) \) to the torsion summand is injective. Hence, by commutativity of the diagram, determining the image of \( \eta_{18}^*: \pi_{18}(PL/O) \to \pi_{19}(PL/O) \) reduces to determining the image of \( \eta_{18}^*: \pi_{18}(PL) \to \pi_{19}(PL) \). To compute this, we consider the following commutative diagram induced from the fiber sequence $PL\to G\to G/PL$.
\[\begin{tikzcd}
	& 0 & {\pi_{18}(PL)} & {\pi_{18}^s} & {\pi_{18}(G/PL)} \\
	0 & {\pi_{20}(G/PL)} & {\pi_{19}(PL)} & {\pi_{19}^s} & {0.}
	\arrow[from=1-2, to=1-3]
	\arrow["\cong", from=1-3, to=1-4]
	\arrow["{\eta_{18}^*}"', from=1-3, to=2-3]
	\arrow["0", from=1-4, to=1-5]
	\arrow["{\eta_{18}^*}", from=1-4, to=2-4]
	\arrow[from=2-1, to=2-2]
	\arrow[from=2-2, to=2-3]
	\arrow["j_{PL}"',from=2-3, to=2-4]
	\arrow[from=2-4, to=2-5]
\end{tikzcd}\]
Here, $\pi_{18}(PL)\cong \pi_{18}^s$ since $\pi_{18}(PL)\cong \pi_{18}(PL/O)$. From \cite[Theorem 14.1]{toda}, we get $\eta\circ\nu^*=0$, $\eta^2\circ \bar{\mu}=4\bar{\zeta}$ \cite[Theorem 14.1]{toda}, so the image of the map $\eta_{18}^*:\pi_{18}^s\to \pi_{19}^s$ is $\mathbb{Z}/2\{4\bar{\zeta}\}$. Since the torsion components of $\pi_{19}(PL)$ maps injectively into $\pi_{19}^s$ under $j_{PL}$ \cite[Page 307]{gw2}, the commutative diagram above implies that the map $\eta_{18}^*:\pi_{18}(PL)\to \pi_{19}(PL)$ has image $\mathbb{Z}/2$. This completes the proof for the case $k=18$.

For $k=19$, we note from \cite[Table A3.3]{ravenel} that $\pi_{19}(G/O)$ is generated by $j_*(\bar{\sigma})$. But  $\bar{\sigma}\in \langle \nu,\eta\circ\sigma,\sigma\rangle$ with zero indeterminacy \cite{toda}. Then by properties of Toda brackets \cite[Page 33]{toda}, we have
\[\eta\bar{\sigma}\in \eta \langle\nu, \eta\sigma, \sigma\rangle\subseteq \langle\eta\nu, \eta\sigma, \sigma\rangle.\]
Since $\eta\nu=0$ \cite[Theorem 14.1]{toda} and the Toda bracket $\langle\eta\nu, \eta\sigma, \sigma\rangle$ has indeterminancy zero, $\eta\bar{\sigma}=0$. Hence, by the lower part of the commutative diagram \eqref{commutative_diagram}, we have $\eta_{19}^*:\pi_{19}(G/O)\to \pi_{20}(G/O)$ is a trivial map. Since $\psi_*:\pi_{20}(Top/O)\to \pi_{20}(G/O)$ is injective, it then follows from the upper part of the diagram \eqref{commutative_diagram} that $\eta_{19}^*:\pi_{19}(Top/O)\to \pi_{20}(Top/O)$ is also the zero map.
%From \cite{gw2}, we have $\pi_{19}(Top/O)\cong bP_{20}\oplus \pi_{19}^s/{\mathit{Im}(J)}$ and $\pi_{20}(Top/O)\cong \pi_{20}^s/{\mathit{Im}(J)}$. The image of $\eta_{19}^*: \pi_{19}(Top/O)\to \pi_{20}(Top/O)$ coincides with the image of $\eta_{19}^*:\pi_{19}(G/O)\to\pi_{20}(G/O)$ from the upper part of the diagram \eqref{commutative_diagram}. As $\pi_{19}(G/O)\cong \pi_{19}^s/{\mathit{Im}(J)}\cong \mathbb{Z}/2\{[\Bar{\sigma}]\}$, where $\Bar{\sigma}=\langle \nu,\eta\circ\sigma,\sigma\rangle$ with zero indeterminacy and $\eta\circ \Bar{\sigma}=0$ \cite{ravenel, toda}, the map $\eta_{19}^*:\pi_{19}(Top/O)\to \pi_{20}(Top/O)$ is zero.
	\end{proof}
    From the proof of the above Lemma and \cite{toda}, we observe the following:
    \begin{rmk}
The image of \(\eta^*:\pi_k(Top/O)\to \pi_{k+1}(Top/O)\) is generated by the exotic spheres corresponding to the elements \(\eta\epsilon\), \(\eta\mu\), \(\eta\kappa\), \(\eta\eta^*\), \(4\nu^*\), and \(\eta\bar{\mu}\) in \(\pi_{k+1}^s\) for \(k = 8, 9, 14, 16,\) and \(17\), respectively.
\end{rmk}

       % \begin{rmk}
        %  The image of $\eta^*:\pi_k(Top/O)\to \pi_{k+1}(Top/O)$ is generated by the elements exotic spheres c $\eta\epsilon, \eta\mu, \eta\kappa, \eta\eta^*,4\nu^*,$ and $\eta\bar{\mu}$ of $\pi_{k+1}^s$ for $k=8, 9,14,16,$ and $17$ respectively.
        %\end{rmk}
	\begin{cor}%\label{rmk2.10}
    \indent
		\begin{itemize}
			\item[(i)] The map $(\eta^2)^*:\pi_k(Top/O)\to\pi_{k+2}(Top/O)$ is zero for $7\leq k\leq 19$, except for $k=9, 16$ and $17$.
			\item[(ii)] The image of the map $(\eta^2)^*:\pi_{k}(Top/O)\to\pi_{k+2}(Top/O)$ is $\z/2$ for $k=9, 16$ and $17$.
		\end{itemize}
	\end{cor}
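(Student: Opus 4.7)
The starting observation is that $\eta^2 \colon \mathbb{S}^{k+2} \to \mathbb{S}^k$ is the composition $\mathbb{S}^{k+2} \xrightarrow{\eta} \mathbb{S}^{k+1} \xrightarrow{\eta} \mathbb{S}^k$, so functoriality of pre-composition factors the map as
\[
(\eta^2)^*_k \;\colon\; \pi_k(Top/O) \xrightarrow{\eta^*} \pi_{k+1}(Top/O) \xrightarrow{\eta^*} \pi_{k+2}(Top/O).
\]
Hence $(\eta^2)^*_k$ vanishes whenever either factor does. Reading off Lemma \ref{lemma2.17}, $\eta^*_k = 0$ for $k \in \{7,11,12,13,15,18,19\}$, and by shifting indices $\eta^*_{k+1} = 0$ for $k \in \{10,11,12,14,17,18\}$. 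Their union covers every $k$ in $[7,19]$ except $k \in \{8,9,16\}$, leaving only these three cases to be analyzed individually.

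For each of the remaining values I would invoke the remark following Lemma \ref{lemma2.17}, which identifies the generator of the order-two group $\mathit{Im}(\eta^*_k)$ as $\eta\epsilon$, $\eta\mu$, and $\eta\eta^*$ respectively. Applying the second copy of $\eta^*$ amounts to further multiplication by $\eta$, so the question reduces to whether $\eta^2\epsilon$, $\eta^2\mu$ and $\eta^2\eta^*$ are nonzero in the target. For $k = 8$, the classical relation $\eta\epsilon = \nu^3$ in $\pi_9^s$ together with $\eta\nu = 0$ gives $\eta^2\epsilon = \eta\nu^3 = 0$, so $(\eta^2)^*_8 = 0$. For $k = 9$ and $k = 16$ I would cite the stable-stem tables of Toda \cite{toda} and Ravenel \cite{ravenel}: $\eta^2\mu \neq 0$ in $\pi_{11}^s$ and $\eta^2\eta^* \neq 0$ in $\pi_{18}^s$, each of order exactly two. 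By the same reasoning used in the proof of Lemma \ref{lemma2.17}, these elements remain nontrivial under the $\pi_*^s$-action on $\pi_*(Top/O)\cong\Theta_*$, producing the $\mathbb{Z}/2$ images asserted in part (ii).

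The main (modest) obstacle is confirming that the nonzero classes $\eta^2\mu$ and $\eta^2\eta^*$ in the stable stems really do survive under the $\pi_*^s$-module structure on $\pi_*(Top/O)$; this follows from the fiber sequence $O \to G \to G/O$ together with the Kirby--Siebenmann identification, exactly as in the argument underlying Lemma \ref{lemma2.17}. No fresh cohomological input is required beyond that lemma and the standard tables of $\pi_*^s$.
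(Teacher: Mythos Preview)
Your overall strategy---factoring $(\eta^2)^*$ as $\eta^*\circ\eta^*$ and using the table in Lemma~\ref{lemma2.17} to dispose of all but $k\in\{8,9,16\}$---is exactly how the paper (which gives no explicit proof) implicitly derives this corollary. Two of your case-by-case justifications, however, need repair.

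For $k=8$: the relation $\eta\epsilon=\nu^3$ is false in $\pi_9^s$; these are independent generators, detected by $h_1c_0$ and $h_2^3$ in different Adams filtrations. The relation you may be recalling is $\eta\bar\nu=\nu^3$. Nevertheless the conclusion $\eta^2\epsilon=0$ in $\pi_{10}^s$ is correct: from $\eta\sigma=\bar\nu+\epsilon$ one gets $\eta^2\epsilon=\eta^3\sigma-\eta\cdot\nu^3$, and both terms vanish since $\eta^3=12\nu$ and $\eta\nu=0$.

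For $k=9$: the passage from $\eta^2\mu\neq 0$ in $\pi_{11}^s$ to nonvanishing in $\pi_{11}(Top/O)$ cannot go through $\mathrm{coker}(J)$, because $\mathrm{coker}(J)_{11}=0$ (all of $\pi_{11}^s$ lies in the image of $J$). The clean fix avoids $\pi_*^s$ entirely and uses only the table: the $2$-torsion of $\pi_{10}(Top/O)\cong\Theta_{10}\cong\mathbb{Z}/6$ is precisely $\mathbb{Z}/2=\mathrm{Im}(\eta^*_9)$, and since $\eta^*_{10}$ annihilates odd torsion yet has image $\mathbb{Z}/2$ by Lemma~\ref{lemma2.17}, the composite $(\eta^2)^*_9=\eta^*_{10}\circ\eta^*_9$ must have image $\mathbb{Z}/2$. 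For $k=16$ your argument via $\pi_{18}^s$ does go through, since $\mathrm{coker}(J)_{18}=\pi_{18}^s$ and $\eta^2\eta^*$ (equal to $4\nu^*$ modulo higher Adams filtration, via $h_1^3h_4=h_0^2h_2h_4$) is nonzero there.
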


Using the long exact sequence induced from the cofiber sequence $\mathbb{S}^0\xrightarrow{\times 2^r} \mathbb{S}^0\xhookrightarrow{i^{2^r}_0} M(\mathbb{Z}/{2^r})$ along $Top/O$ and Lemma \ref{lemma2.17}, we have the following result.
 	\begin{cor}%\label{imagel^{4+k}}
		The image of $(i^{2^r}_{3+k} \circ \eta)^*:[\Sigma^{3+k} M(\Z/2^r),Top/O]\to\pi_{4+k}(Top/O)$ is 
		\begin{itemize}
			\item[(i)] $0$ for $k= 1, 2, 3, 4, 8, 9, 10, 12, 16$,
			\item[(ii)] $\z/2$ for $k= 5, 6, 7, 11, 13, 15$,
			\item[(iii)] $\z/2\oplus\z/2$ for $k=14$.
		\end{itemize}
  where $i^{2^r}_{3+k}: \s^{3+k}\hookrightarrow \Sigma^{3+k} M(\z/{2^r})$ is the inclusion map.
	\end{cor}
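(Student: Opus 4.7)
The plan is to reduce the corollary to a direct calculation using Lemma \ref{lemma2.17}. First, I would apply the functor $[-, Top/O]$ to the $(3+k)$-fold suspension of the cofibre sequence $\s^0 \xrightarrow{\times 2^r} \s^0 \xrightarrow{i_0} M(\Z/{2^r})$ and extract from the resulting Puppe long exact sequence the identification
\[ \mathrm{Im}\bigl((i_{3+k})^*\colon [\Sigma^{3+k} M(\Z/{2^r}), Top/O]\to \pi_{3+k}(Top/O)\bigr) = \pi_{3+k}(Top/O)[2^r], \]
the $2^r$-torsion subgroup of $\pi_{3+k}(Top/O)$. Since $(i_{3+k}\circ\eta)^* = \eta^* \circ (i_{3+k})^*$, the image we want equals $\eta^*\bigl(\pi_{3+k}(Top/O)[2^r]\bigr)$.

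Next, I would show that this image agrees with the full image of $\eta^*\colon \pi_{3+k}(Top/O)\to \pi_{4+k}(Top/O)$, and is therefore independent of $r$. Since $2\eta = 0$ stably and $\pi_*(Top/O)$ is a $\pi_*^s$-module ($Top/O$ being an infinite loop space), $\eta^*$ annihilates $2\,\pi_{3+k}(Top/O)$ and so factors through the mod-$2$ quotient; odd-torsion classes, being divisible by $2$, contribute trivially. Thus $\eta^*$ factors through the $2$-primary component modulo $2$. In every relevant degree $3+k\in\{8,9,10,14,16,17\}$, the $2$-primary summand of $\Theta_{3+k}$ is elementary abelian, so each of its elements is already $2$-torsion, hence $2^r$-torsion for every $r\geq 1$. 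Consequently the map $\pi_{3+k}(Top/O)[2^r]\to \pi_{3+k}(Top/O)/2$ is surjective, yielding $\eta^*\bigl(\pi_{3+k}(Top/O)[2^r]\bigr) = \mathrm{Im}(\eta^*)$.

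Finally, I would transcribe the answer from Lemma \ref{lemma2.17} under the index shift (Lemma's $k$ equals Corollary's $k+3$): the $0$ entries at Lemma's $k\in\{4,5,6,7,11,12,13,15,18,19\}$ give part (i); the $\Z/2$ entries at Lemma's $k\in\{8,9,10,14,16\}$ give part (ii); and the $\Z/2\oplus\Z/2$ entry at Lemma's $k=17$ gives part (iii). The main subtlety is the $r$-independence of the answer, which rests on the elementary-abelian structure of the $2$-primary part of $\Theta_{3+k}$ in the range considered; equivalently, one may verify directly that each generator $\eta\epsilon, \eta\mu, \eta\kappa, \eta\eta^*, 4\nu^*, \eta\bar{\mu}$ of $\mathrm{Im}(\eta^*)$ from the Remark factors as $\eta$ applied to an explicit order-$2$ class in $\pi_{3+k}(Top/O)$ arising from the corresponding order-$2$ class in $\pi_{3+k}^s$, so it is automatically $2^r$-torsion for any $r\geq 1$.
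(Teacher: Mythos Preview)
Your proposal is correct and follows essentially the same approach as the paper, which simply states that the result follows from the long exact sequence of the cofibre sequence $\s^0\xrightarrow{\times 2^r}\s^0\xrightarrow{i_0}M(\Z/2^r)$ along $Top/O$ together with Lemma~\ref{lemma2.17}. Your write-up fleshes out the details the paper leaves implicit, in particular the $r$-independence via $2\eta=0$ and the elementary-abelian structure of $\Theta_{3+k}$ at the prime $2$ in the relevant degrees.
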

 
 We are now ready to establish the computations of $I_c(M\times\s^k)$ for $3\leq k\leq 16.$
	\begin{theorem}\label{concorM^4}
		Let $M$ be a closed, oriented, smooth $4$-manifold.
		\begin{enumerate}
			\item[(i)] If $M$ is a spin manifold, then $I_c(M\times\s^k)=0$ for $k\geq 3.$
			
			\item[(ii)] If $M$ is a non-spin manifold, then 
			\begin{itemize}
				\item[(a)] $I_c(M\times\s^k)=0$ for $k=3,4,8,9,10,12,16$, 
				\item[(b)] $I_c(M\times\s^k)=\mathbb{Z}/2$ for $k=5,6,7,11,13,15$,
				\item[(c)] $I_c(M\times\s^{k})=\mathbb{Z}/2\oplus\mathbb{Z}/2$ for $k=14$.
              
			\end{itemize}
		\end{enumerate}
		
	\end{theorem}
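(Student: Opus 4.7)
The plan is to apply Lemma \ref{concordanceinertia} to the stable decompositions (\ref{spinM^4}) and (\ref{nonspinM^4}) of $M$ and then read off the answer from the $\eta$-multiplication table of Lemma \ref{lemma2.17}.

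For part (i), when $M$ is spin, decomposition (\ref{spinM^4}) exhibits $M$ stably as a wedge of the top cell $\s^4$ with a complex of dimension at most $3$. Because the top-cell attaching map is then null-homotopic, the degree-one map $\Sigma^k f_M$ splits (up to sign) as the projection onto the $\s^{k+4}$ summand. Lemma \ref{rmk2.5} identifies $I_c(M\times \s^k)$ with the kernel of $(\Sigma^k f_M)^*$, and the preceding splitting forces this induced map to be the inclusion of $\pi_{k+4}(Top/O)$ as a direct summand of $[\Sigma^k M,Top/O]$, hence injective. This gives $I_c(M\times\s^k)=0$ for every $k\geq 3$.

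For part (ii), I use decomposition (\ref{nonspinM^4}), which writes $M$ stably as $\C P^2\vee Z$ with $Z$ a wedge of summands of dimension at most $3$ (and in particular with at least two summands, so that the hypothesis of Lemma \ref{concordanceinertia} is satisfied). Since $\C P^2\simeq\mathrm{Cone}(\eta:\s^3\to\s^2)$, the $4$-cell attaching map $\phi_3$ into the $3$-skeleton (\ref{skeletonM^4}) has only one non-zero component, namely $\eta$ into a single $\s^2$ factor. Lemma \ref{concordanceinertia} then reduces the computation to
\[
I_c(M\times\s^k) \;\cong\; \mathrm{Im}\bigl(\eta^* : \pi_{k+3}(Top/O)\longrightarrow \pi_{k+4}(Top/O)\bigr),
\]
because the components of the suspended attaching map into the $\s^1$, $\s^3$, $\Sigma M(\z/b_j)$, $\Sigma^2 M(\z/b_j)$ and the remaining $\s^2$ summands are all trivial. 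Setting $k'=k+3$ and consulting Lemma \ref{lemma2.17}: the values $k\in\{3,4,8,9,10,12,15,16\}$ correspond to $k'\in\{6,7,11,12,13,15,18,19\}$, whose image is $0$; the values $k\in\{5,6,7,11,13\}$ correspond to $k'\in\{8,9,10,14,16\}$, each yielding $\z/2$; and $k=14$ corresponds to $k'=17$, yielding $\z/2\oplus\z/2$. These are exactly the sub-cases (a)--(c) of (ii).

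The one genuinely delicate point is verifying that in the non-spin decomposition no extra component of $\phi_3$ survives at the $Top/O$ level — for instance a component of type $i\circ\eta$ into a $\Sigma^2 M(\z/b_j)$-summand or a component $\tilde{\eta}_{b_j}$ into $\Sigma M(\z/b_j)$. These possibilities were, however, already eliminated during the derivation of (\ref{nonspinM^4}) by the Wu-formula argument together with the $Sq^1 Sq^2$ composition, and the wedge-sum reparametrization carried out there absorbs every stray $\eta$-component into a single $\s^2$ factor. Once that structural input is taken for granted, the calculation reduces to the purely mechanical read-off above.
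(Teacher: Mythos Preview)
Your argument is correct and is exactly the paper's approach: invoke the stable splittings (\ref{spinM^4}) and (\ref{nonspinM^4}), feed them into Lemma \ref{concordanceinertia} (or Lemma \ref{rmk2.5} in the spin case), and read off the answer from Lemma \ref{lemma2.17}. One small wrinkle: your parenthetical about the hypothesis $w\geq 2$ is misplaced---the relevant wedge is the $3$-skeleton $M^{(3)}=\bigvee A_i$, not $Z$, and in the edge case $M=\C P^2$ one has $w=1$; but the proof of Lemma \ref{concordanceinertia} does not actually use $w\geq 2$, so nothing breaks.
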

	\begin{proof}
The proof is obtained by applying the stable splittings (\ref{spinM^4}), and (\ref{nonspinM^4}) for $M,$ in combination with Lemma \ref{concordanceinertia}, and Lemma \ref{lemma2.17}.
	\end{proof}
	
	\begin{prop}\label{inerM^4}
          Let $M$ be a closed, smooth, oriented, non-spin manifold of dimension $4$. Then $\z/2 \subseteq I_c(M\times\s^k)$ for $k=17,18, 8n-2(n\geq 3).$ 
	\end{prop}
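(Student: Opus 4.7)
The plan is to carry out the same reduction as in Theorem \ref{concorM^4} and then extend the stable-stems computation of Lemma \ref{lemma2.17} to higher values of $k$. First I would invoke the non-spin stable decomposition (\ref{nonspinM^4}) of $M$, together with Lemma \ref{concordanceinertia} applied to $X=\mathbb{C}P^2$ with attaching map $\eta : \mathbb{S}^3 \to \mathbb{S}^2$, to identify
\[
I_c(M\times\mathbb{S}^k) \;\cong\; \operatorname{Im}\bigl(\eta^*:\pi_{k+3}(Top/O)\to\pi_{k+4}(Top/O)\bigr).
\]
So it suffices to show that for $k\in\{17,18\}$ and $k=8n-2$ with $n\geq 3$, the image of $\eta^*$ in the relevant degrees $k+3\in\{20,21\}\cup\{8n+1 : n\geq 3\}$ contains a $\mathbb{Z}/2$.

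For the infinite family $k=8n-2$ ($n\geq 3$), the candidate is Adams' $\mu$-family class $\mu_n\in\pi_{8n+1}^s$ of order $2$, together with its nonzero product $\eta\mu_n\in\pi_{8n+2}^s$. I would first identify $\pi_{8n+2}(Top/O)\cong\Theta_{8n+2}$ via Kirby--Siebenmann, and then simplify $\Theta_{8n+2}$ through the Kervaire--Milnor exact sequence. The three inputs that produce the identification $\Theta_{8n+2}\cong\pi_{8n+2}^s$ are: (i) $\pi_{8n+2}(O)=0$ by Bott periodicity, hence $\operatorname{Im}(J)_{8n+2}=0$; (ii) $bP_{8n+3}=0$ because $8n+3$ is odd; and (iii) the Kervaire invariant $\Phi :\pi_{8n+2}^s\to\mathbb{Z}/2$ vanishes in these degrees, since $8n+2\neq 2^j-2$ for any $j$ when $n\geq 1$ by a direct mod-$8$ computation. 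Under this identification $\eta\mu_n$ defines a nonzero class in $\Theta_{8n+2}$. To realize it as $\eta^*(x)$ for some $x\in\Theta_{8n+1}$, I would note that $\mu_n\in\operatorname{Im}(J)$, so its Pontryagin--Thom class vanishes and any lift $x$ lies in $bP_{8n+2}\subset\Theta_{8n+1}$; a surgery computation on the parallelizable manifold bounding $x$ then verifies that $\eta\cdot x$ coincides with $\eta\mu_n$ under the above identification.

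For the two sporadic values $k=17,18$, I would use $\bar\kappa\in\pi_{20}^s$ (from Toda's $\kappa$-family) and its nonzero $\eta$-multiples $\eta\bar\kappa\in\pi_{21}^s$, $\eta^2\bar\kappa\in\pi_{22}^s$. The same Kervaire--Milnor analysis identifies the relevant $\Theta_*$ with $\pi_*^s$ modulo the small contributions from $bP$ and $\operatorname{Im}(J)$, and nonzero $\eta$-multiples in $\pi_*^s$ descend to nonzero classes in $\Theta_*$, furnishing the required $\mathbb{Z}/2$ summands in the image of $\eta^*$.

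The main obstacle is the compatibility in the last step. The Pontryagin--Thom map $\Theta_*\to\pi_*^s/\operatorname{Im}(J)$ is not strictly $\pi_*^s$-linear, since $\operatorname{Im}(J)$ is not a $\pi_*^s$-submodule (for instance $\mu_n\in\operatorname{Im}(J)$ while $\eta\mu_n\notin\operatorname{Im}(J)_{8n+2}=0$). Consequently, verifying that the chosen lift $x\in\Theta_{8n+1}$ of $\mu_n$ satisfies $\eta\cdot x=\eta\mu_n\in\Theta_{8n+2}$ demands either an explicit surgery/framing argument on the parallelizable cobordism realizing $x$, or a spectral-sequence argument through the fiber sequence $\Omega G/Top\to Top/O\to G/O$ combined with Adams' computation of the $\eta$-action on the image of $J$.
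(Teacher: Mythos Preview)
Your reduction to computing $\operatorname{Im}\bigl(\eta^*:\pi_{k+3}(Top/O)\to\pi_{k+4}(Top/O)\bigr)$ via the stable splitting (\ref{nonspinM^4}) and Lemma \ref{concordanceinertia} is correct, and you have identified the right candidate classes ($\mu_n$ for the periodic family, $\bar\kappa$ and $\eta\bar\kappa$ for $k=17,18$). The genuine gap is the assertion that $\mu_n\in\operatorname{Im}(J)$: this is false. Since $J:\pi_*(O)\to\pi_*^s$ is induced by a map of infinite loop spaces $O\to G$, it commutes with precomposition by $\eta$; hence $\eta\cdot\operatorname{Im}(J)_{8n+1}\subset\operatorname{Im}(J)_{8n+2}$, and the latter group vanishes because $\pi_{8n+2}(O)=0$. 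As $\eta\mu_n\neq 0$, this forces $\mu_n\notin\operatorname{Im}(J)$. The same reasoning shows that $\operatorname{Im}(J)$ \emph{is} a $\pi_*^s$-submodule of $\pi_*^s$, so your ``main obstacle'' rests on the same false premise and is not an obstacle at all.

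Once this is corrected the argument collapses to the paper's, and no surgery on bounding manifolds or spectral-sequence chase is needed. The map $\psi_*:\Theta_m=\pi_m(Top/O)\to\pi_m(G/O)$ lands in the subgroup $\operatorname{coker}(J)_m\subset\pi_m(G/O)$ (every homotopy sphere is stably parallelizable, so its image dies under $\pi_m(G/O)\to\pi_m(BO)$), and $\psi_*$ commutes with $\eta^*$ because it is induced by a map of spaces. Thus for each listed $k$ one picks a class $x\in\operatorname{coker}(J)_{k+3}$ with $\eta x\neq 0$ in $\operatorname{coker}(J)_{k+4}$ (Adams supplies $[\mu_n]$ for $k=8n-2$; Ravenel's tables supply $[\bar\kappa]$, $[\eta\bar\kappa]$ for $k=17,18$), lifts it to $\tilde x\in\Theta_{k+3}$ via the Kervaire--Milnor surjection, and reads off $\eta^*(\tilde x)\neq 0$ in $\Theta_{k+4}$ from $\psi_*(\eta^*(\tilde x))=\eta x\neq 0$.
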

    \begin{proof}
From \cite[Table A3.3]{ravenel} and \cite[Theorem 1.3, and 1.4]{JFADAMS66}, it can be obtained that for $k=17,18$, or $8n-2 (n\geq 3)$, there exists an element of order 2, denoted as $x_{3+k}$, in $\pi_{3+k}^s/{\text{Im}(J_{3+k})}$ such that $\eta \circ x_{3+k}$ is non-zero in $\pi_{4+k}^s/{\text{Im}(J_{4+k})}$. The existence of such elements, along with the surjectivity of the induced map  $\psi^*:\pi_{3+k}(Top/O)\to \pi_{3+k}^s/{\text{Im}(J_{3+k})}$, implies that the image of $\eta^*:\pi_{3+k}(Top/O)\to\pi_{4+k}(Top/O)$ includes an exotic $(4+k)$-sphere corresponding to the element $\eta \circ x_{3+k}$. The proposition then follows from the stable splitting (\ref{nonspinM^4}) and Lemma \ref{concordanceinertia}.
    \end{proof}
	%%%%%%%%%%%%%%%%%%%%%%%%%%%%%%%%%%%%%%%%%%%%%%%%%%%%%%%%%%%%%%%%%%%%%%%%%%%%%%%%%%%%%%%%%%%%%%%%%%%%%%%%%%%%%%%%%%%%%%%%%%%%%%%%%%%%%%%%%%%%%%%%%%%%%%%%%%%%%%%%%%%%%%%%%%%%%%%%%%%%%%%%%%%%
 We now proceed to calculate $\mathcal{C}(M\times\mathbb{S}^k)$. It follows from Corollary \ref{concorMtimesSk} that it is sufficient to compute the summand $[\Sigma^k M,Top/O]$, which can be obtained from either the stable splitting (\ref{spinM^4}) or (\ref{nonspinM^4}) by examining the same computations for each component. Therefore, we first compute the component $[\Sigma^k \mathbb{C}P^2, Top/O]$. Consider the cofiber sequence for the map 
 
 \begin{equation}\label{coficp^2}
		\mathbb{S}^3\xrightarrow{\eta_2} \mathbb{S}^2\xhookrightarrow{i}\mathbb{C}P^2\xrightarrow{f_{\mathbb{C}P^2}}\Sigma\mathbb{S}^3\xrightarrow{\Sigma \eta_2}\Sigma\mathbb{S}^2\hookrightarrow\cdots
	\end{equation}
 This sequence induces a long exact sequence
	\begin{equation}\label{eqncp^2}
		\cdots\to\pi_{3+k}(X)\xrightarrow{\eta^*}\pi_{4+k}(X)\xrightarrow{(\Sigma^kf_{\mathbb{C}P^2})^*}[\Sigma^k\mathbb{C}P^2,X]\xrightarrow{(\Sigma^ki)^*}\pi_{2+k}(X)\xrightarrow{\eta^*}\pi_{3+k}(X)\to\cdots
	\end{equation} where $\Sigma^k \eta_2=\eta$ for $k\geq1$ \cite{toda} and $X$ is any infinite loop space.

The following lemma will be useful in the computation of $[\Sigma^k \mathbb{C}P^2, Top/O]$.

\begin{lemma}\label{etaG/O}
   The image of the map $\eta^*: \pi_8(G/O)\to \pi_9(G/O)$ is $\mathbb{Z}/2\oplus\mathbb{Z}/2$.
\end{lemma}
\begin{proof}
  Since $\pi_9(G/O)\cong \mathbb{Z}/2\oplus\mathbb{Z}/2$, it suffices to work locally at prime $2$. We note from \cite[Theorem 5.18]{milgram} that
    \[G/O_{(2)}\simeq BSO_{(2)}\times cok(J_{(2)}).\] Here $\pi_8(cok(J_{(2)}))$ is the cokernel of the $J$-homomorphism $J:\pi_8(O)\to \pi_8^s$ and is equal to $\mathbb{Z}/2\{[\bar{\nu}]\}$ \cite[Theorem 1.1.14]{ravenel}. Moreover, $\pi_9(cok(J_{(2)}))$ is isomorphic to an index two summand of the cokernel of the $J$-homorphism $J:\pi_9(O)\to \pi_9^s$ \cite[Remark 11.43]{Luck_surgery} and is given by $\mathbb{Z}/2\{[\nu^3]\}$ (see Lemma \ref{9_coker}). Using the above decomposition, we obtain the following commutative diagram:
    \[
    \begin{tikzcd}
	{\pi_8(G/O_{(2)})} && {\pi_9(G/O_{(2)})} \\
	\\
	{\pi_8(BSO_{(2)})\oplus \pi_8(cok(J_{(2)}))} && {\pi_9(BSO_{(2)})\oplus \pi_9(cok(J_{(2)})).}
	\arrow["{\eta^*}", from=1-1, to=1-3]
	\arrow["{\cong }"', from=1-1, to=3-1]
	\arrow["\cong", from=1-3, to=3-3]
	\arrow["{\eta^*\oplus \eta^*}"', from=3-1, to=3-3]
\end{tikzcd}\]
Since $[\bar{\nu}]\in \pi_8(cok(J_{(2)}))$, $[\nu^3]\in \pi_9(cok(J_{(2)}))$ and $\eta\circ \bar{\nu}=\nu^3$ \cite[Theorem 14.1]{toda}, the map $\eta^*:\pi_8(cok(J_{(2)}))\to \pi_9(cok(J_{(2)}))$ has image $\mathbb{Z}/2$. Moreover from \cite[Page 29]{bel}, the map $\eta^*: \pi_8(BSO_{(2)})\to \pi_9(BSO_{(2)})$ is surjective. Hence, by the commutativity of the above diagram, the image $\eta^*:\pi_8(G/O_{(2)})\to \pi_9(G/O_{(2)})$ is $\mathbb{Z}/2\oplus\mathbb{Z}/2$.
\end{proof}
\begin{lemma}\label{9_coker}
    The generator of $\pi_9(cok(J_{(2)}))$ is $[\nu^3]$.
\end{lemma}
\begin{proof}
We first show that the map $$\eta^*: \pi_9(BSO_{(2)})\to \pi_{10}(BSO_{(2)})$$ is an isomorphism. To show this, we consider the following commutative diagram:
\[\begin{tikzcd}
	{\pi_8^s} && {\pi_9^s} \\
	{\mathbb{Z}/2\cong \pi_9(BSO_{(2)})} && {\pi_{10}(BSO_{(2)})\cong \mathbb{Z}/2,}
	\arrow["{\eta^*}", from=1-1, to=1-3]
	\arrow["J", from=2-1, to=1-1]
	\arrow["{\eta^*}"', from=2-1, to=2-3]
	\arrow["J"', from=2-3, to=1-3]
\end{tikzcd}\]
where both maps $\eta^*: \pi_8^s \to \pi_9^s$ and $J: \pi_9(BSO_{(2)}) \to \pi_8^s$ are injective by \cite[pp.~189--190]{toda} and \cite[Theorem~1.1.13]{ravenel}, respectively. Hence, it follows from the above diagram that $\eta^*: \pi_9(BSO_{(2)}) \to \pi_{10}(BSO_{(2)})$ is an isomorphism. 

Next, we consider the map \[\eta^*: \pi_9(G/O_{(2)})\to \pi_{10}(G/O_{(2)}).\] Since $\psi_*: \pi_{10}(Top/O)\to \pi_{10}(G/O)$ is an isomorphism, $\psi_*: \pi_9(Top/O)\to \pi_9(G/O)$ is surjective, and $\eta^*: \pi_9(Top/O_{(2)})\to \pi_{10}(Top/O_{(2)})$ is surjective by Lemma \ref{lemma2.17}, it follows from the diagram \eqref{commutative_diagram} that $$\eta^*: \pi_9(G/O_{(2)})\to \pi_{10}(G/O_{(2)})$$ is surjective.

From \cite[Theorem 1.1.14]{ravenel}, we have
\[
\pi_9(G/O_{(2)}) \cong \mathbb{Z}/2\{[\nu^3]\} \oplus \mathbb{Z}/2\{[\mu]\}, \quad
\pi_{10}(G/O_{(2)}) \cong \mathbb{Z}/2\{[\eta \circ \mu]\}.
\] 
Also, by \cite[Remark 11.43]{Luck_surgery}, we have
\[
\pi_{10}(cok(J_{(2)})) = 0.
\]

 %Using \cite[Theorem 1.1.14]{ravenel}, we deduce from the long exact sequence induced from $G/O\xrightarrow{i} BO\xrightarrow{\widetilde{J}} BG$ that $\pi_9(G/O)\cong \mathbb{Z}/2\{[\nu^3]\}\oplus \mathbb{Z}/2\{[\mu]\}$ and $\pi_{10}(G/O)\cong \pi_{10}^s$. Also, from \cite[Remark 11.43]{Luck_surgery}, we note that $\pi_{10}(cok(J_{(2)}))$ is trivial.\\
We now show that $[\nu^3] \in \pi_9(cok(J_{(2)})) \cong \mathbb{Z}_2$. Consider the following commutative diagram:
\[\begin{tikzcd}
	{\mathbb{Z}/2\{[\nu^3]\}\oplus \mathbb{Z}/2\{[\mu]\}\cong \pi_9(G/O_{(2)})} && {\pi_{10}(G/O_{(2)})\cong \mathbb{Z}/2\{[\eta\circ \mu]\}} & 0 \\
	{\pi_9(BSO_{(2)})\oplus \pi_9(cok(J_{(2)}))} && {\pi_{10}(BSO_{(2)}).}
	\arrow["{\eta^*}", from=1-1, to=1-3]
	\arrow["\cong"', from=1-1, to=2-1]
	\arrow[from=1-3, to=1-4]
	\arrow["\cong", from=1-3, to=2-3]
	\arrow["{\eta^*\oplus \eta^*}"', from=2-1, to=2-3]
\end{tikzcd}\]
Here the map $\eta_*: \pi_9(cok(J_{(2)})) \to \pi_{10}(cok(J_{(2)}))$ is trivial. Since $\eta_* : \pi_9(BSO_{(2)}) \to \pi_{10}(BSO_{(2)})$ is an isomorphism, the kernel of
\[
\eta_* \oplus \eta_*: \pi_9(BSO_{(2)}) \oplus \pi_9(cok(J_{(2)})) \longrightarrow \pi_{10}(BSO_{(2)})
\]
is precisely the summand $\pi_9(cok(J_{(2)}))$.

%Where the map $\eta^*\colon \pi_9(cok(J_{(2)}))\to \pi_{10}(cok(J_{(2)}))$ vanishes. Since $\eta^*: \pi_9(BSO_{(2)})\to \pi_{10}(BSO_{(2)})$ is an isomorphism, the kernel of
%\[
%\eta^*\oplus\eta^*: \pi_9(BSO_{(2)})\oplus \pi_9(cok(J_{(2)})) \longrightarrow \pi_{10}(BSO_{(2)})
%\]
%is therefore precisely the summand $\pi_9(cok(J_{(2)}))$.

On the other hand, since $\eta^*: \pi_9(G/O_{(2)})\to \pi_{10}(G/O_{(2)})$ is surjective and its image is generated by $[\eta\circ \mu]$, both the elements $[\mu]$ and $[\mu]\oplus[\nu^3]$ of $\pi_9(G/O_{(2)})$ map non-trivially under $\eta^*$. Therefore, 
\[
\mathit{ker}\big(\eta^*\colon \pi_9(G/O_{(2)})\to \pi_{10}(G/O_{(2)})\big)
= \mathbb{Z}/2\{[\nu^3]\}.
\]

%Combining these two observations with the commutative diagram, we conclude that 

From these two observations and the commutative diagram, it follows that \[[\nu^3]\in \pi_9(cok(J_{(2)})).\]

This completes the proof.

%Since $\eta^*: \pi_9(BSO_{(2)})\to \pi_{10}(BSO_{(2)})$ is isomorphism, it follows from the diagram above that either $[\mu]$ or $[\nu^3]+[\mu]$ is the generator of $\pi_9(BSO_{(2)})$. This implies that $\pi_9(cok(J_{(2)}))$ is generated by either $[\nu^3]+[\mu]$ or $[\nu^3]$. If $[\nu^3]+[\mu]\in \pi_9(cok(J_{(2)}))$, then it contradicts the commutativity of the above diagram. Hence, $[\nu^3]$ is the generator of $\pi_9(cok(J_{(2)}))$. 
\end{proof}

	\begin{prop}\label{prop3.6}
		\hspace{2em}
		\begin{enumerate}
			\item[(1)] $[\Sigma^2\mathbb{C}P^2,Top/O]\cong0.$
			\item[(2)] $[\Sigma^3\mathbb{C}P^2,Top/O]\cong\Theta_7.$
                 \item[(3)] $[\Sigma^4\mathbb{C}P^2,Top/O]\cong \Theta_8.$
			\item[(4)] There is a non-split short exact sequence $$0 \to \mathbb{Z}/2\oplus\mathbb{Z}/2\to [\Sigma^5\mathbb{C}P^2,Top/O]\to \Theta_7\to 0,$$ where $\mathbb{Z}/2\oplus\mathbb{Z}/2\subset \Theta_9.$
			\item[(5)] $[\Sigma^6\mathbb{C}P^2,Top/O]\cong\z/3\subset \Theta_{10}.$
			\item[(6)] There is a short exact sequence $$0\to\z/{496}\to[\Sigma^7\mathbb{C}P^2,Top/O]\to \z/2\oplus\z/2\to0,$$ where $\mathbb{Z}/{496}\subset \Theta_{11}$ and $\mathbb{Z}/2\oplus\mathbb{Z}/2\subset \Theta_9.$
			\item[(7)] $[\Sigma^8\mathbb{C}P^2,Top/O]\cong\z/3\subset \Theta_{10}.$
			\item[(8)] $[\Sigma^9\mathbb{C}P^2,Top/O]\cong\Theta_{13}\oplus\Theta_{11}.$
			\item[(9)] $[\Sigma^{10}\mathbb{C}P^2,Top/O]\cong\Theta_{14}.$
		\end{enumerate}
	\end{prop}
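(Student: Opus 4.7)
The strategy is uniform: for each $k \in \{2, \ldots, 10\}$ I apply the cofibre sequence (\ref{coficp^2}) and read the middle term off the long exact sequence (\ref{eqncp^2}) with $X = Top/O$. Extracting the four-term portion around $[\Sigma^k\mathbb{C}P^2, Top/O]$ and using that $\pi_n(Top/O) = \Theta_n$ for $n \ge 5$ (and $0$ for $n \le 4$), each case produces a short exact sequence
\[
0 \to \mathrm{coker}\bigl(\eta^{\ast}\colon \pi_{3+k}(Top/O)\to \pi_{4+k}(Top/O)\bigr) \to [\Sigma^k\mathbb{C}P^2, Top/O] \to \ker\bigl(\eta^{\ast}\colon \pi_{2+k}(Top/O)\to \pi_{3+k}(Top/O)\bigr) \to 0,
\]
whose outer terms are read off directly from Lemma \ref{lemma2.17} and the tabulated values of $\Theta_*$.

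I then process the cases. For $k=2$ both outer groups vanish, giving (1). For $k=3, 4$ the kernel is zero (the source $\pi_{2+k}$ vanishes) and the image of $\eta^*$ on the cokernel side is zero by Lemma \ref{lemma2.17}, yielding $\Theta_7$ and $\Theta_8$ in (2), (3). For $k=6$, the map $\eta^* \colon \Theta_8 \to \Theta_9$ is injective (its image is the $\mathbb{Z}/2$ recorded in Lemma \ref{lemma2.17}), so the kernel is trivial, and $\mathrm{coker}(\eta^* \colon \Theta_9 \to \Theta_{10}) \cong \mathbb{Z}/6 / \mathbb{Z}/2 \cong \mathbb{Z}/3$, giving (5). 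For $k=7$, $\mathrm{coker}(\Theta_{10} \to \Theta_{11}) \cong \mathbb{Z}/992 / \mathbb{Z}/2 \cong \mathbb{Z}/496$ and $\ker(\Theta_9 \to \Theta_{10}) \cong (\mathbb{Z}/2)^2$, giving (6). For $k=8$, $\Theta_{12}=0$ forces the cokernel to vanish, and the kernel of $\Theta_{10} \to \Theta_{11}$ is the $\mathbb{Z}/3$ summand of $\mathbb{Z}/6$, giving (7). For $k=9$, both $\eta^*$ maps vanish since $\Theta_{12}=0$; the resulting extension $0 \to \Theta_{13} \to [\Sigma^9 \mathbb{C}P^2, Top/O] \to \Theta_{11} \to 0$ splits by coprimality $\gcd(3,992)=1$, giving (8). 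For $k=10$, the kernel vanishes and the cokernel is $\Theta_{14} = \mathbb{Z}/2$ because any homomorphism $\mathbb{Z}/3 \to \mathbb{Z}/2$ is zero, giving (9).

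The one item demanding more than bookkeeping is the non-splitness assertion in (4). There the recipe routinely produces $0 \to (\mathbb{Z}/2)^2 \to [\Sigma^5 \mathbb{C}P^2, Top/O] \to \Theta_7 \to 0$; the added content is that this extension is not split. My plan is to localize at $2$ and concentrate on the summand $\mathbb{Z}/4 \subseteq (\Theta_7)_{(2)}$, showing that a generator there admits no order-$4$ lift to $[\Sigma^5 \mathbb{C}P^2, Top/O]_{(2)}$. Concretely, I would realize candidate lifts as Toda-bracket representatives $\langle \eta, 2, -\rangle$ applied to a $2$-local generator of $\Theta_7$, and detect their order via the secondary operation $\Theta$ associated to the Adem relation $Sq^2 Sq^2 = Sq^3 Sq^1$ acting on $\Sigma^5 \mathbb{C}P^2$, comparing with $[\Sigma^5 \mathbb{C}P^2, G/Top]$ through the fibre sequence $Top/O \xrightarrow{\psi} G/O \xrightarrow{\phi} G/Top$. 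This secondary-operation computation is where I expect the technical difficulty to lie; the remaining eight cases are direct diagram chases anchored on Lemma \ref{lemma2.17}.
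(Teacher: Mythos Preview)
Your treatment of cases (1)--(3) and (5)--(9) is essentially identical to the paper's: both use the long exact sequence (\ref{eqncp^2}) with $X=Top/O$ together with Lemma \ref{lemma2.17}, and the splitting in (8) via $\gcd(|\Theta_{13}|,|\Theta_{11}|)=1$ is exactly what the paper does implicitly.

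For (4), however, your plan has a genuine gap. The Toda bracket $\langle \eta, 2, x\rangle$ you invoke is not defined: a $2$-local generator $x$ of $\Theta_7\cong\Z/28$ has order $4$, so $2x\neq 0$ and the bracket makes no sense. Moreover, the secondary operation $\Theta$ you mention detects $\eta^2$ on cohomology classes; it is not set up to measure the order of a lift of $x$ along $\s^7\hookrightarrow\Sigma^5\C P^2$, which is what non-splitness amounts to. So as written, your argument for (4) does not go through.

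The paper resolves (4) by a different and much cleaner route, staying entirely within comparison of fibrations. Using the fibre sequence $\Omega(G/Top)\xrightarrow{\omega}Top/O\xrightarrow{\psi}G/O\xrightarrow{\phi}G/Top$, one first checks from (\ref{eqncp^2}) with $X=G/O$ that $[\Sigma^5\C P^2,G/O]=0$ (because $\eta^\ast:\pi_8(G/O)\to\pi_9(G/O)$ is onto and $\pi_7(G/O)=0$). Hence $\omega_\ast:[\Sigma^6\C P^2,G/Top]\to[\Sigma^5\C P^2,Top/O]$ is surjective. Since $[\Sigma^6\C P^2,G/Top]\cong\Z\oplus\Z/2$, the target is generated by at most two elements, one of order $\leq 2$; combined with the known order $|\Theta_7|\cdot 4=112$ from your short exact sequence, this forces $[\Sigma^5\C P^2,Top/O]\cong\Z/56\oplus\Z/2$, and the extension is non-split. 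I suggest replacing your Toda-bracket/secondary-operation outline with this argument.
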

	\begin{proof}
 
In this proof, we make use of the long exact sequence (\ref{eqncp^2}) for $X$ being either $Top/O$, $G/O$, or $G/Top$. We also use the long exact sequence derived from the fiber sequence $\Omega(G/Top)\xrightarrow{\omega}Top/O\xrightarrow{\psi}G/O\xrightarrow{\phi}G/Top.$ 

%Statements {(1)} to {(9)}, excluding {(4)} and {(6)}, are direct consequences of the long exact sequence (\ref{eqncp^2}) and Lemma \ref{lemma2.17}. For the statements {(4)} and {(6)}, we derive the following short exact sequences :

Statements {(1)} to {(9)}, except {(4)}, are direct consequences of the long exact sequence (\ref{eqncp^2}) and Lemma \ref{lemma2.17}. For statement {(4)}, we obtain the following short exact sequence from the long exact sequence \eqref{eqncp^2} and Lemma \ref{lemma2.17} :
\begin{equation}\label{sigma5cp2}
0 \to \z/2 \oplus \z/2 \to [\Sigma^5 \mathbb{C}P^2,Top/O] \to \Theta_7 \to 0.
\end{equation}

%and

%\begin{equation}\label{sigma7cp2}
%0\to\z/{496}\to[\Sigma^7\mathbb{C}P^2,Top/O]\to \z/2\oplus\z/2\to0,
%\end{equation}
%where $\mathbb{Z}/2\oplus\mathbb{Z}/2 \subset \Theta_9$ and $\mathbb{Z}/{496}\subset \Theta_{11}.$  
To show that this short exact sequence does not split, we consider the following commutative diagram where the columns are induced from $\cdots\to \Omega(G/Top)\xrightarrow{\omega} Top/O\xrightarrow{\psi}G/O\xrightarrow{\phi}G/Top$ and the rows are induced from the cofiber sequence \eqref{coficp^2}.
            \begin{center}
			\begin{tikzcd}
				& 0\arrow[d]\\
				0 \arrow[r] \arrow[d] &{\z/2} \arrow[r,"(\Sigma^6 f_{\mathbb{C}P^2})^*"] \arrow[d,"\omega_*"'] & {[\Sigma^6 \mathbb{C}P^2,G/Top]} \arrow[r,"(\Sigma^6 i)^*"] \arrow[d,"\omega_*"] & {\z}\arrow[r] \arrow[d,"\omega_*"] & 0 \arrow[d]\\
				{\z/2} \arrow[r,"\eta^*"] \arrow[d] & {\bigoplus\limits_{i=1}^3\z/2} \arrow[r,"(\Sigma^5 f_{\mathbb{C}P^2})^*"] \arrow[d,"\psi_*"'] & {[\Sigma^5 \mathbb{C}P^2,Top/O]} \arrow[r,"(\Sigma^5 i)^*"] \arrow[d,"\psi_*"']& {\z/28} \arrow[r] \arrow[d] & {0}\\
				{\z\oplus \z/2} \arrow[r,"\eta^*"] & {\z/2 \oplus \z/2} \arrow[r,"(\Sigma^5 f_{\mathbb{C}P^2})^*"]\arrow[d]& {[\Sigma^5 \mathbb{C}P^2,G/O]} \arrow[r]\arrow[d] & 0\\
				& 0 & {0.}
			\end{tikzcd}
		\end{center} 
Here the top row splits at $[\Sigma^{6}\mathbb{C}P^2, G/Top]$ and $\psi_*: [\Sigma^5 \mathbb{C}P^2, Top/O] \to [\Sigma^5 \mathbb{C}P^2, G/O]$ is surjective as  $[\Sigma^5 \mathbb{C}P^2, G/Top] \cong 0$. Since \( \pi_7(G/O) = 0 \) and the map \( \eta^* : \pi_8(G/O) \to \pi_9(G/O) \) is surjective by Lemma~\ref{etaG/O}, it follows from the bottom row of the diagram that \( [\Sigma^5 \mathbb{C}P^2, G/O] = 0 \). %Since by Lemma~\ref{etaG/O}, the map $\eta^\ast: \pi_8(G/O) \to \pi_9(G/O)$ is surjective and $\pi_7(G/O)=0$, we get from the bottom row of the diagram that $[\Sigma^5\mathbb{C}P^2, G/O]$ is zero. 
Consequently, we deduce that the map $\omega_\ast : [\Sigma^6 \C P^2, G/Top] \to [\Sigma^5 \C P^2, Top/O]$ is also surjective. By combining this with the fact $[\Sigma^6 \C P^2, G/Top]=\mathbb{Z}\oplus \mathbb{Z}/2$ and the commutativity of the top rectangle,  we conclude that $[\Sigma^5 \C P^2, Top/O] \cong \Z/2 \oplus \Z/56$. This confirms that the short exact sequence \eqref{sigma5cp2} does not split, thereby completing the proof of Statement {(4)}. %It should be noted that Statement {(6)} directly follows from the sequence (\ref{sigma7cp2}). 
This concludes the proof of the proposition.
\end{proof}

 %%%%%%%%%%%%%%%%%%%%%%%%%%%%%%%%%%%%%%%%%%%%%%%%%%%%%%%%%%%%%%%%%%%%%%%%%%%%%%%%%%%%%%%%%%%%%%%%%%%%%%%%%%%%%%%%%%%%%%%%%%%%%%%%%%%%%%%%%%%%%%%%%%%%%%%%%%%%%%%%%%%%%%%%%%%%%%%%%%%%%%%%%%%%%%%%%%%%%%%%%%%%%%%%%%%%%%%%%%%%%%%%%%%%%%%
%By using the group structure of $\pi_m(Top/O)$ $(1\leq m\leq 18)$ localized at the primes and the long exact sequence induced from the cofiber sequence $\mathbb{S}^0\xrightarrow{p^r}\mathbb{S}^0\xhookrightarrow{i^{p^r}_0} M(\mathbb{Z}/{p^r})$ along $Top/O,$ we get the following lemma :

 We frequently use the long exact sequence 
 \begin{equation}\label{longMoore}
     \cdots\xrightarrow{\times p^r}\pi_{4+k}(Top/O)\xrightarrow{(q^{p^r}_{4+k})^*}[\Sigma^{3+k}M(\mathbb{Z}/{p^r}),Top/O]\xrightarrow{(i^{p^r}_{3+k})^*}\pi_{3+k}(Top/O)\xrightarrow{\times p^r} \cdots
 \end{equation}
 induced from the cofiber sequence 
 \begin{equation}\label{Moore}
    \mathbb{S}^{3+k}\xrightarrow{\times p^r}\mathbb{S}^{3+k}\xhookrightarrow{i^{p^r}_{3+k}}\Sigma^{3+k} M(\mathbb{Z}/{p^r})\xrightarrow{q^{p^r}_{4+k}}\mathbb{S}^{4+k}\cdots
 \end{equation}
  throughout the paper.

By using the group structure of $\pi_m(Top/O)$ $(1\leq m\leq 18)$ localized at the primes and the long exact sequence \eqref{longMoore}, we get the following lemma:
\begin{lemma}\label{moorespacecal}
    Given any positive integer $r$, the following holds:
    \begin{itemize}
        \item[(a)] $[\Sigma^k M(\mathbb{Z}/{3^r}),Top/O]= 0,$ for $1\leq k\leq  17$ with $k\neq 9,10,12,$ and $13.$
        \item[(b)] $[\Sigma^k M(\mathbb{Z}/3^r),Top/O]$ equals $\z/3,$ for $k=9,10,12,$ and $13.$ 
        \item[(c)] $[\Sigma^k M(\mathbb{Z}/{7^r}),Top/O]=0$ for $1\leq k\leq 17$ with $k\neq 6,7.$
        \item[(d)] $[\Sigma^k M(\mathbb{Z}/{7^r}),Top/O]$ equals $\z/7,$ for $k=6$ and $7.$
        \item[(e)] $[\Sigma^{11}M(\mathbb{Z}/{31^r}), Top/O]=\mathbb{Z}/{31}$. 
    \end{itemize}
\end{lemma}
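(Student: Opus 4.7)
The plan is to apply the contravariant functor $[-,Top/O]$ to the cofibre sequence $\mathbb{S}^0\xrightarrow{p^r}\mathbb{S}^0\xhookrightarrow{i_0}M(\mathbb{Z}/p^r)$ for $p\in\{3,7\}$, exploit the $p$-locality of the Moore spectrum so that only the $p$-primary part of $\pi_\ast(Top/O)=\Theta_\ast$ is visible, and then read off the answer from the known odd-primary structure of $\Theta_n$ for $n\leq 18$.

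Concretely, the induced long exact sequence reads
\[\cdots\to\pi_{k+1}(Top/O)\xrightarrow{\cdot p^r}\pi_{k+1}(Top/O)\to [\Sigma^k M(\mathbb{Z}/p^r),Top/O]\to\pi_k(Top/O)\xrightarrow{\cdot p^r}\pi_k(Top/O)\to\cdots\]
Each $\pi_n(Top/O)$ is finite for $n\geq 1$ (it equals $\Theta_n$ for $n\geq 5$ and is either $0$ or $\mathbb{Z}/2$ in low degrees), so on the coprime-to-$p$ part the map $\cdot p^r$ is a bijection and hence both its kernel and its cokernel are purely $p$-primary. Moreover, for $n\leq 18$ every $p$-primary summand of $\Theta_n$ at $p\in\{3,7\}$ is cyclic of exponent at most $p$, so $\cdot p^r$ annihilates the $p$-primary part for every $r\geq 1$. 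The long exact sequence therefore collapses to the short exact sequence
\[0\to \Theta_{k+1}^{(p)}\to [\Sigma^k M(\mathbb{Z}/p^r),Top/O]\to\Theta_k^{(p)}\to 0.\]

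The next step is to tabulate $\Theta_n^{(p)}$ for $p\in\{3,7\}$ and $n\leq 18$. Invoking the Kervaire--Milnor exact sequence $0\to bP_{n+1}\to\Theta_n\to\mathrm{coker}(J_n)\to 0$ together with the standard $p$-primary tables of the stable stems and of the image of $J$ (see \cite{ravenel, toda}), one obtains: $\Theta_n^{(3)}=\mathbb{Z}/3$ exactly for $n\in\{10,13\}$ (the generators arising from $\beta_1\in\mathrm{coker}(J_{10})$ and $\alpha_1\beta_1\in\mathrm{coker}(J_{13})$), and $\Theta_n^{(7)}=\mathbb{Z}/7$ exactly for $n=7$ (coming from the $7$-primary part of $bP_8=\mathbb{Z}/28$); all other odd-primary parts in the range vanish. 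Substituting into the short exact sequence yields the claim: at $p=3$ the group is non-trivial exactly when $\{k,k+1\}\cap\{10,13\}\neq\emptyset$, i.e.\ for $k\in\{9,10,12,13\}$ with value $\mathbb{Z}/3$, giving (a) and (b); at $p=7$ it is non-trivial exactly for $k\in\{6,7\}$ with value $\mathbb{Z}/7$, giving (c) and (d).

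No extension problem arises, since at most one of $\Theta_k^{(p)}$ and $\Theta_{k+1}^{(p)}$ is non-zero in each of the listed cases. The main (essentially only) obstacle is bookkeeping: verifying from the standard tables that no additional odd-primary torsion in $\Theta_n$ sneaks in at the upper end of the range, and in particular that the $3$-primary and $7$-primary parts of $\mathrm{coker}(J_n)$ vanish for all $n\leq 18$ outside the stated dimensions.
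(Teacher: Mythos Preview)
Your proof is correct and follows exactly the approach sketched in the paper: both argue by applying the long exact sequence of the cofibre $\mathbb{S}^0\xrightarrow{p^r}\mathbb{S}^0\hookrightarrow M(\mathbb{Z}/p^r)$ along $Top/O$ and then reading off the odd-primary part of $\Theta_m$ for $m\leq 18$. You supply more detail (the reduction to a short exact sequence and the check that no extension problem arises), but the method is the same.
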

\begin{lemma}\label{moore_space_cal_2}
Let $r$ be any positive integer. Then the following holds:
    \begin{itemize}
        \item[(a)] $[\Sigma^6 M(\Z/{2^r}),Top/O] = \begin{cases}\Z/2 &\mbox{if $r=1,$}\\ \mathbb{Z}/4 &\mbox{if $r\geq 2$.}\end{cases}$
        \item[(b)] $[\Sigma^7 M(\Z/{2^r}),Top/O] = \begin{cases}\Z/2\oplus\mathbb{Z}/2 &\mbox{if $r=1,$}\\ \mathbb{Z}/2\oplus\mathbb{Z}/4 &\mbox{if $r\geq 2$.}\end{cases}$
            \item[(c)] $[\Sigma^8 M(\Z/2), Top/O]=\mathbb{Z}/4\oplus\mathbb{Z}/2\oplus\mathbb{Z}/2$.
            \item[(d)] $[\Sigma^9 M(\Z/{2^r}),Top/O] = \begin{cases}\Z/4\oplus\Z/2\oplus\Z/2 &\mbox{if $r=1,$}\\ \bigoplus\limits_{i=1}^4\mathbb{Z}/2 &\mbox{if $r\geq 2$.}\end{cases}$
        \item[(e)] $[\Sigma^{10}M(\mathbb{Z}/{2}), Top/O]=\mathbb{Z}/2\oplus \mathbb{Z}/2$.
        \item[(f)] $[\Sigma^{11} M(\Z/{2^r}),Top/O] = \begin{cases}\Z/{2^r} &\mbox{if $1\leq r\leq 5,$}\\ \mathbb{Z}/{2^5} &\mbox{if $r\geq 6$.}\end{cases}$
    \end{itemize}
\end{lemma}
\begin{proof}
Since $\pi_6(Top/O)$ and $\pi_{12}(Top/O)$ are both trivial, then from the long exact sequence \eqref{longMoore}, we get 
    \[[\Sigma^6 M(\mathbb{Z}/{2^r}), Top/O]= \mathit{Coker}\left(\pi_{7}(Top/O)\cong \mathbb{Z}/28\xrightarrow{\times 2^r}\pi_7(Top/O)\cong \mathbb{Z}/28\right)\] and
    \[[\Sigma^{11}M(\mathbb{Z}/{2^r}), Top/O]=\mathit{ker}\left(\pi_{11}(Top/O)\cong \mathbb{Z}/{992}\xrightarrow{\times 2^r}\pi_{11}(Top/O)\cong \mathbb{Z}/{992}\right).\]
This gives the results {(a)} and {(f)}.

We note from \eqref{longMoore} that $[\Sigma^8 M(\mathbb{Z}/2), Top/O]$ fits into the following short exact sequence
\[0\to \pi_{9}(Top/O)=\bigoplus\limits_{i=1}^3\mathbb{Z}/2\to [\Sigma^8 M(\mathbb{Z}/2), Top/O]\to \pi_8(Top/O)=\mathbb{Z}/2\to 0. \]
This implies that $[\Sigma^8 M(\mathbb{Z}/2), Top/O]$ is either $\bigoplus\limits_{i=1}^4 \mathbb{Z}/2$ or $\mathbb{Z}/4\oplus\mathbb{Z}/2\oplus\mathbb{Z}/2$. Since $\eta^*: \pi_8(Top/O)\to \pi_9(Top/O)$ has image $\mathbb{Z}/2$ by Lemma \ref{lemma2.17}, it follows from \cite[Lemma 2.3]{DCTSWS2018} that $[\Sigma^8 M(\mathbb{Z}/2), Top/O]=\mathbb{Z}/4\oplus\mathbb{Z}/2\oplus\mathbb{Z}/2$. 

For $k=7$, $p=2$, and $r=1$, we obtain from \eqref{longMoore} the short exact sequence  
\[
0 \longrightarrow \mathbb{Z}/2 \xrightarrow{(q_{11}^2)^*} [\Sigma^{10}M(\mathbb{Z}/2), Top/O] \xrightarrow{(i_{10}^2)^*} \mathbb{Z}/2 \longrightarrow 0,
\]  
where the group $\mathbb{Z}/2$ on the left is the cokernel of the map  
$\pi_{11}(Top/O)\xrightarrow{\times 2} \pi_{11}(Top/O)$,  
and the $\mathbb{Z}/2$ on the right is the kernel of the map  
$\pi_{10}(Top/O)\xrightarrow{\times 2}\pi_{10}(Top/O)$.  

We claim that this sequence splits. Let $\bar{x}\in [\Sigma^{10}M(\mathbb{Z}/2),Top/O]$ be an element such that $(i_{10}^2)^*(\bar{x})=x$, where $x$ is the generator of $\mathit{ker}(\pi_{10}(Top/O)\xrightarrow{\times 2}\pi_{10}(Top/O))$. By Lemma \ref{lemma2.17}, the image of the map $\eta^*: \pi_{10}(Top/O)\to \pi_{11}(Top/O)$ is $\mathbb{Z}/2$, which is contained in the image of $\pi_{11}(Top/O)\xrightarrow{\times 2} \pi_{11}(Top/O)$. Hence, by \cite[Lemma 2.3]{DCTSWS2018}, we have  
\[
2\bar{x} = [\mu \circ \eta] = 0 \quad \text{in } \pi_{11}(Top/O)/\operatorname{Im}(\pi_{11}(Top/O)\xrightarrow{\times 2}\pi_{11}(Top/O)).
\]  
This shows that $[\Sigma^{10}M(\mathbb{Z}/2), Top/O]\cong \mathbb{Z}/2\oplus \mathbb{Z}/2$.

(b) and~(d) will be established in the course of the proof of Lemma~\ref{etatilde} for the cases \( k = 4 \) and \( k = 6 \).
\end{proof}
%Parts (a) and (d) of the above lemma follow from the long exact sequence of pointed homotopy classes obtained by applying the contravariant functor \([{-}, Top/O]\) to the cofiber sequence $\mathbb{S}^0 \xrightarrow{2^r} \mathbb{S}^0 \xhookrightarrow{i^{2^r}_0} M(\mathbb{Z}/2^r)$. Parts~(b) and~(c) will be established in the course of the proof of Lemma~\ref{etatilde} for the cases \( k = 4 \) and \( k = 6 \).
Let $l_p$ represent the number of $p$-torsion summands of $H_2(M;\mathbb{Z})$ in \eqref{hom}. Also, let $l_p^i$ denote the number of $\mathbb{Z}/{p^i}$-summands in $H_2(M;\mathbb{Z})$ in \eqref{hom}.
 
      \begin{prop}\label{cM^4}
          Let $M$ be a $4$-dimensional closed, oriented manifold with homology as in (\ref{hom}). Then
          \begin{itemize}
              \item[(i)]$[\Sigma M,Top/O]\cong\bigoplus\limits_{j=1}^{2 l_{2}}\pi_3(Top/O) \oplus\bigoplus\limits_{l=1}^d \pi_3(Top/O).$
			\item[(ii)]$[\Sigma^2 M,Top/O]\cong \bigoplus\limits_{i=1}^m\pi_3(Top/O)\oplus\bigoplus\limits_{j=1}^{l_2}\pi_3(Top/O).$
			\item[(iii)]$[\Sigma^3 M,Top/O]\cong\Theta_7.$
               \item[(iv)] $[\Sigma^4 M,Top/O]\cong \Theta_8\oplus\bigoplus\limits_{i=1}^m\Theta_7\oplus\bigoplus\limits_{j=1}^{l_2^1}\mathbb{Z}/2\oplus\bigoplus\limits_{j=1}^{l_2-l_2^1}\mathbb{Z}/4\oplus\bigoplus\limits_{j=1}^{l_7} \mathbb{Z}/7.$
             \item[(v)]  \begin{itemize}
                 \item[(a)]  $[\Sigma^5 M,Top/O]\cong\Theta_9\oplus \bigoplus\limits_{i=1}^m\Theta_8\oplus \bigoplus\limits_{j=1}^{l_2^1}\mathbb{Z}/2\oplus\bigoplus\limits_{j=1}^{l_2-l_2^1}\mathbb{Z}/4\oplus \bigoplus\limits_{j=1}^{2 l_7} \mathbb{Z}/7 \\
                 \oplus \bigoplus\limits_{j=1}^{l_2^1}(\mathbb{Z}/2\oplus\mathbb{Z}/2)\oplus\bigoplus\limits_{j=1}^{l_2-l_2^1}(\mathbb{Z}/2\oplus\mathbb{Z}/4) \oplus\bigoplus\limits_{l=1}^d\Theta_7,$ if $M$ is a spin manifold.
                 \item[(b)] $[\Sigma^5 M,Top/O]\cong\z/{56}\oplus\z/2\oplus \bigoplus\limits_{i=1}^m\Theta_8
                 \oplus \bigoplus\limits_{j=1}^{l_2^1}\mathbb{Z}/2\oplus\bigoplus\limits_{j=1}^{l_2-l_2^1}\mathbb{Z}/4\oplus \bigoplus\limits_{j=1}^{2 l_7} \mathbb{Z}/7 \\ \oplus \bigoplus\limits_{j=1}^{l_2^1}(\mathbb{Z}/2\oplus\mathbb{Z}/2)\oplus\bigoplus\limits_{j=1}^{l_2-l_2^1}(\mathbb{Z}/2\oplus\mathbb{Z}/4) \oplus\bigoplus\limits_{l=1}^{d-1}\Theta_7,$ if $M$ is a non-spin manifold.
             \end{itemize}
          \end{itemize}
      \end{prop}
\begin{proof}
The computations of $[\Sigma^k M,Top/O]$ for $1\leq k \leq 5$ can be immediately obtained from the stable splittings \eqref{spinM^4} and \eqref{nonspinM^4} by applying Proposition \ref{prop3.6}, Lemmas \ref{moorespacecal} and \ref{moore_space_cal_2}.
\end{proof}

Similarly, using Proposition~\ref{prop3.6} together with Lemmas~\ref{moorespacecal} and~\ref{moore_space_cal_2}, one can partially compute the groups $[\Sigma^k M, Top/O]$ for $5 \leq k \leq 10$, based on the stable decompositions of $M$ given in \eqref{spinM^4} and \eqref{nonspinM^4}.

 %%%%%%%%%%%%%%%%%%%%%%%%%%%%%%%%%%%%%%%%%%%%%%%%%%%%%%%%%%%%%%%%%%%%%%%%%%%%%%%%%%%%%%%%%%%%%%%%%%%%%%%%%%%%%%%%%%%%%%%%%%%%%%%%%%%%%%%%%%%%%%%%%%%%%%%%%%%%%%%%%%%%%%%%%%%%%%%%%%%%%%%%%%%%%%%%%%%%%%%%%%%%%%%%%%%%%%%%%%%%%%%%%%%%%%%%%%%%%%%%%%%%%%%%%%%%%%%%%%%%%%%%%%%%%%%%%%%%%%%%%%%%%%%%%%%%%%%%%%%%%%%%%%%%%%%%%%%%%%%%%%%%%%%%%%%%%%%%%%%%%%%%%%%%%%%%%%%%%%%%%%%%%%%%%%%%%%%%%%
	
	\section{Smooth structures on \texorpdfstring{$M^5\times\mathbb{S}^k$}{}}\label{5dim}
	
	Let $M$ be a simply connected, closed, oriented $5$-dimensional smooth manifold. Note that by using Poincar\'e\xspace duality and the Universal coefficient theorem, $M$ has the following homology group
	\begin{equation}\label{homologyM^5}
		H_l(M;\z) =
		\begin{cases}
			\mathbb{Z} & l=0,5\\
			\mathbb{Z}^{d} \oplus\bigoplus\limits_{i=1}^t \z/{p_i^{r_i}} & l=2\\
			\mathbb{Z}^{d} & l=3\\
			0 & \text{otherwise,}\\
		\end{cases}       
	\end{equation} where \( d, t \) are non-negative integers, \( r_j \) (for \( 1 \leq j \leq t \)) are positive integers, and \( p_i \) are prime numbers.

	\subsection{The stable homotopy type of a simply connected \texorpdfstring{$5$}{}-manifold}\label{stabletypeM^5}
	We may now consider the minimal cell structure on $M$ \cite[Propositon 4C.1]{hatcher}, for which the stable homotopy type of the $3$-skeleton $M^{(3)}$ of $M$ is given by 
	\begin{equation*}%\label{skeletonM^5}
M^{(3)}\simeq\bigvee_{i=1}^d\left(\s^2\vee\s^3\right)\vee\bigvee_{j=1}^t \Sigma^2 M(\Z/p_j^{r_j}).
	\end{equation*}
	Therefore, $M\simeq M^{(3)}\cup_{g}\mathbb{D}^5$ where $g:\s^4\to M_{(3)}$ is the attaching map of the top cell. Hence, we have a cofiber sequence
	\begin{equation*}%\label{cofibM^5}
		\s^4\xrightarrow{g} M^{(3)}\xhookrightarrow{i} M\xrightarrow{f_{M}}\s^5\to\cdots
	\end{equation*} 
	of spectra. We divide the possibilities into two cases according to whether $M$ is spin or not. As in the $4$-dimensional case, we observe that the stable attaching map is $0$ if $M$ is spin. 
	\begin{prop}\label{5manspin}
		Let $M$ be a simply connected spin $5$-manifold with homology as in \eqref{homologyM^5}, then, we have a stable equivalence 
		\[ M\simeq \s^5 \vee \bigvee_{i=1}^d\left(\s^2\vee\s^3\right)\vee\bigvee_{j=1}^t \Sigma^2 M(\Z/p_j^{r_j}). \]
	\end{prop}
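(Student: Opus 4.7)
The plan is to show that the stable attaching map $g\colon\s^4\to M^{(3)}$ in the cofibre sequence \eqref{cofibM^5} is null-homotopic when $M$ is spin; combined with the splitting \eqref{skeletonM^5} of $M^{(3)}$, this splits the cofibre sequence and delivers the claimed wedge decomposition.

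Decomposing the stable class of $g$ according to the wedge \eqref{skeletonM^5} gives
\begin{equation*}
	[g]\in \bigoplus_{i=1}^{d}\pi_2^s \oplus \bigoplus_{i=1}^{d}\pi_1^s \oplus \bigoplus_{j=1}^{t}\pi_2\bigl(M(\Z/p_j^{r_j})\bigr),
\end{equation*}
and since $\pi_2(M(\Z/p_j^{r_j}))=0$ for $p_j$ odd, the only possibly nontrivial Moore-summand contributions are those with $p_j=2$, in addition to the $\s^2$- and $\s^3$-components. The $\eta$-components into $\s^3$-summands and the $\tilde\eta_{2^{r_j}}$-components into Moore summands are detected by the primary Steenrod square $Sq^2\colon H^3(M;\Z/2)\to H^5(M;\Z/2)$ evaluated on the Poincar\'e dual of the relevant $3$-cell; Wu's formula on the oriented $5$-manifold $M$ identifies $Sq^2$ on $H^3$ with cup product by $v_2=w_2$, and the spin hypothesis $w_2=0$ forces every such component to vanish.

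The remaining $\eta^2$-type components (into $\s^2$-summands or via $i\circ\eta^2$ into Moore summands) are detected by the secondary cohomology operation $\Theta\colon H^2(M;\Z/2)\to H^5(M;\Z/2)$ associated with the Adem relation $Sq^2Sq^2=Sq^3Sq^1$, as recalled in \S\ref{aboutconcor}. Since $M$ is simply connected, $H^4(M;\Z/2)=0$, so $Sq^2$ and $Sq^3$ vanish automatically on $H^2(M;\Z/2)$, and together with the vanishing of $Sq^2$ on $H^3$ established above the indeterminacy $Sq^2H^3+Sq^1H^4$ collapses to zero; hence $\Theta$ is a well-defined homomorphism. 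The main obstacle is to show $\Theta\equiv 0$ under the spin hypothesis. I would split $H^2(M;\Z/2)$ according to integral lifts: for a class $\alpha$ in the image of the reduction $H^2(M;\Z)\to H^2(M;\Z/2)$ (in particular, for every class dual to an $\s^2$-summand), an integral lift $\tilde\alpha$ satisfies $\tilde\alpha^2\in H^4(M;\Z)=0$, which allows one to lift the classifying map $M\to K(\Z/2,2)$ to the homotopy fibre of $Sq^2\colon K(\Z/2,2)\to K(\Z/2,4)$ and invoke the spin structure to trivialize the resulting secondary $k$-invariant; for the remaining classes, which arise from $2$-torsion in $H_2(M;\Z)$ and from the bottom cells of the Moore summands, I would modify the wedge decomposition of $M^{(3)}$ via the Moore-spectrum inclusions $\s^2\hookrightarrow \Sigma^2M(\Z/2^{r_j})$ together with the relations \eqref{Bchirelation2}, reducing to the integrally lifted case exactly as in the $4$-manifold argument of \S\ref{stabletypeM^4}. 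Once $\Theta\equiv 0$ is established, all components of $g$ vanish, so $g$ is stably null-homotopic and the cofibre sequence \eqref{cofibM^5} splits, yielding the wedge decomposition stated in the proposition.
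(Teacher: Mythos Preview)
Your approach differs fundamentally from the paper's. The paper does not attempt a direct cohomological argument; instead it invokes the Smale--Barden classification, which says that every simply connected closed spin $5$-manifold is diffeomorphic to a connected sum of the atomic pieces $\s^5$, $\s^2\times\s^3$, and $M_k$ (with $H_2(M_k)\cong\Z/k\oplus\Z/k$). For $\s^5$ and $\s^2\times\s^3$ the stable splitting is immediate, and for $M_k$ the paper writes down Barden's explicit handle description of $M_k=D_1\cup_\phi D_2$ and reads off the stable type $\Sigma^2M(\Z/k)\vee\Sigma^2M(\Z/k)\vee\s^5$ directly from the cell attachments. The classification is doing the real work.

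Your proposal has a genuine gap precisely at the step you yourself flag as ``the main obstacle'': showing that the secondary operation $\Theta\colon H^2(M;\Z/2)\to H^5(M;\Z/2)$ vanishes under the spin hypothesis. Everything before that is fine---the $\eta$- and $\tilde\eta_{2^r}$-components are indeed killed by $Sq^2=v_2\cup(-)=w_2\cup(-)=0$ on $H^3$. But the remaining $\eta^2$-components into $\s^2$-summands (and the $i\circ\eta^2$-components into Moore pieces with $r_j\geq 2$) are exactly what $\Theta$ detects, and here your argument becomes a wish list: ``lift to the fibre of $Sq^2$ and invoke the spin structure to trivialize the resulting secondary $k$-invariant'' does not specify which class in $H^5$ of that fibre the spin structure is supposed to trivialize, nor why. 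Unlike the $4$-manifold case in \S\ref{stabletypeM^4}, where $\Theta$ acts on $H^1$ and vanishes for trivial degree reasons, here $\Theta$ acts on $H^2$ and there is no dimensional shortcut; you actually need an argument linking the spin hypothesis to this secondary operation. Note too that the absorption trick the paper uses later in the non-spin case---modifying the inclusion $\s^3\hookrightarrow M^{(3)}$ by adding $\eta$ so as to cancel an $\eta^2$---is unavailable here, since in the spin case there is no $\eta$-component to absorb against; so your proposed reduction ``exactly as in the $4$-manifold argument of \S\ref{stabletypeM^4}'' does not go through. The paper's recourse to Smale--Barden and an explicit cell decomposition of $M_k$ is precisely what sidesteps this difficulty.
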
 
	
	\begin{proof}
		If $M$ is a spin manifold, then from the results of Smale and Barden we see that $M$ is completely classified as a connected sum of some atomic pieces. These atomic pieces are $\s^2\times\s^3, \s^5$ and $M_k$ with $H_2(M_k)=\z/k\oplus\z/k, k\neq 1$ and $\infty$ \cite[Page-373]{barden}. We have to show that $\eta^2$ does not appear in the attaching map in each case, and this will imply the proposition. This is clear for $\s^2\times \s^3$ and $\s^5$. We now prove this for $M_k$ from an explicit cell complex construction of it following  \cite[Page-373]{barden}.
		
		Let $N$ be the manifold $(\mathbb{S}^2 \times \mathbb{S}^2)\# (\mathbb{S}^2 \times \mathbb{S}^2)$. Write $N = \partial D_i$ for $i=1,2$ with each 
		\[D_i\cong \mathbb{D}^3 \times \mathbb{S}^2 \setminus (\mathbb{D}^2 \times \mathbb{D}^2) \cup_{\partial \mathbb{D}^4 \times \mathbb{D}^1} \mathbb{D}^3 \times \mathbb{S}^2 \setminus (\mathbb{D}^2 \times \mathbb{D}^2).\] 
		Now consider $\phi : N \to N$ which is described by the matrix $A_k$ on the $2$-skeleton. 
		\[ A_k = \begin{bmatrix} 1 & 0 &0 &0 \\
			0 & 1 & k & 0\\ 
			0 & 0 & 1 & 0 \\
			- k & 0 & 0 & 1  \end{bmatrix}.\]
The manifold $M_k = D_1 \cup_{\phi : N \to N} D_2 $. This manifold corresponds to the manifold $M_n$ in Class IV of the classification in \cite[\S 9]{Sto82}, and we readily see that the attaching map of the top cell is trivial in stable homotopy. 
%Note that $\mathbb{D}^3 \times \mathbb{S}^2$ is obtained out of $\mathbb{S}^2 \times \mathbb{S}^2$ by attaching a $3$-cell via the inclusion of the first factor (so that the space following the attachment is \textcolor{blue}{$\mathbb{S}^2 \times \mathbb{S}^2/(\mathbb{S}^2 \times \ast)\simeq \mathbb{S}^2 \vee \mathbb{S}^4$}), followed by a $5$-cell along the copy of $\mathbb{S}^4$. Let $u_i$, $1\leq i \leq 4$, be the $4$ $2$-spheres in $N$. Proceeding along the same idea,  $D_i$ may be built from $N$ by attaching  
%		$3$-cells along $u_1$ and $u_3$ (so that $Cone(\mathbb{S}^2 \vee \mathbb{S}^2 \stackrel{u_1\vee u_3}{\to} N) \simeq  \mathbb{S}^2 \vee \mathbb{S}^2 \vee \mathbb{S}^4$), followed by a $5$-cell attachment along the copy of $\mathbb{S}^4$. Finally putting all together, we have that $M_k$ is constructed out of $N$ by attaching $3$-cells along $u_1$, $u_3$, $u_1-ku_4$, $ku_2 + u_3$, followed by $2$ $5$-cells along the copy of $\mathbb{S}^4$. One clearly obtains that the stable homotopy type of $M_k$ is 
%		\[ \Sigma^2 M(\Z/k)\vee \Sigma^2 M(\Z/k) \vee \mathbb{S}^5.\]
		This completes the proof.

	\end{proof}
	
	It now remains to consider the case where $M$ is not spin. This means that we have a non-trivial second Stiefel Whitney class, which by Wu's formula implies that $\eta$ appears in the attaching map of the top cell. If the map $\eta$ hits the $\s^3$-factor as in the $4$-dimensional case, it cones off to give a copy of the suspension of $\C P^2$. However $\eta$ may also map via $\tilde{\eta}_{2^s}$ for some $s\in \mathbb{N}$ onto the Moore spectrum part. If in addition, in the expression of the attaching map 
\[ \mathbb{S}^4 \to  \bigvee_{i=1}^{d}\left(\s^2\vee\s^3\right) \vee\bigvee_{j=1}^t \Sigma^2 M(\Z/p_j^{r_j}) \]
there is a factor of $\s^2$ which receives a map $\eta^2$, we may change the homotopy class of $\s^3 \to M^{(4)}$ or $\Sigma^2 M(\Z/2^s) \to M^{(4)}$ to make that factor $0$. That is, we use 
\[ \s^4 \stackrel{\eta_1 + \eta^2_2}{\to} \s^3\vee \s^2 \simeq \Big[\s^4 \stackrel{\eta_1+0}{\to} \s^3\vee \s^2 \stackrel{(id+\eta)\vee id}{\to} \s^3\vee \s^2\Big],\] 
and
\[ \s^4 \stackrel{\tilde{\eta}_1 + \eta^2_2}{\to} \Sigma^2 M(\Z/2^s) \vee \s^2 \simeq \Big[\s^4 \stackrel{\tilde{\eta}_1+0}{\to} \Sigma^2 M(\Z/2^s) \vee \s^2 \stackrel{(id+\hat{\eta})\vee id}{\to} \Sigma^2 M(\Z/2^s) \vee \s^2\Big],\] 
where the map $\hat{\eta}= \eta \circ \hat{q}$ for $\hat{q} : \Sigma^2 M(\Z/2^s) \to \s^3 $ the usual quotient map. In the case both $\eta$ occurs on a $\s^3$-factor and a $\tilde{\eta}$ occurs on a $\Sigma^2 M(\Z/2^s)$-factor, we may make the first factor $0$ via 
\[ \s^4 \stackrel{\eta_1 + \tilde{\eta}_2}{\to} \s^3\vee \Sigma^2 M(\Z/2^s) \simeq \Big[\s^4 \stackrel{0+\tilde{\eta}_2}{\to} \s^3\vee \Sigma^2 M(\Z/2^s) \stackrel{id\vee (\hat{q}+id)}{\to} \s^3\vee \Sigma^2 M(\mathbb{Z}/{2^s})\Big].\] 
The case where two $\tilde{\eta}$s occur, we have $\lambda_{r,s} : M(\Z/2^r) \to M(\Z/2^s)$ for $r\leq s$ such that $\lambda_{r,s} \circ \tilde{\eta}_{2^r} = \tilde{\eta}_{2^s}$, which may be used to simplify the attaching map as above. %On composing with $\eta :\mathbb{S}^4\to \mathbb{S}^3$ (and respectively, $\tilde{\eta}_{2^s}: \mathbb{S}^4 \to \Sigma^2 M(\Z/2^s)$), we may adjust the $\eta^2$ in the expression to $0$. 
Thus, we may have two cases both of which may occur as demonstrated in the examples of \cite[Page-373]{barden}. Thus, if $M$ is not spin, the stable homotopy type is either 

 \begin{equation}\label{5non-spin1}
     M\simeq \Sigma \C P^2 \vee \bigvee_{i=1}^{d-1}\left(\s^2\vee\s^3\right)\vee \s^2 \vee\bigvee_{j=1}^t \Sigma^2 M(\Z/p_j^{r_j})
 \end{equation}
 \begin{center}
     or
 \end{center}
 \begin{equation}\label{5non-spin2}
      M\simeq Cone(\tilde{\eta}_{2^r}) \vee \bigvee_{i=1}^d\left(\s^2\vee\s^3\right)\vee\bigvee_{j=2}^t \Sigma^2 M(\Z/p_j^{r_j})
 \end{equation}
where $p_1^r=2^r$ is the factor which supported the $\tilde{\eta}_{2^r}$ map. The value of $r$ denotes the smallest power of $2$ present in the Moore spectrum segment.
	
\subsection{Concordance classes of \texorpdfstring{$M\times\s^k$}{}}
   It follows from Lemma \ref{concordanceinertia} that the concordance inertia group $I_c(M\times \s^k)$ can be identified with the trivial group, or \\ $Im\left(\pi_{4+k}(Top/O)\xrightarrow{\eta^*} \pi_{5+k}(Top/O)\right),$ or $Im\left([\Sigma^{3+k}M(\mathbb{Z}/{2^r}),Top/O]\xrightarrow{(\Tilde{\eta}_{2^r})^*} \pi_{5+k}(Top/O)\right),$ based on the stable decomposition given in Proposition \ref{5manspin} and \eqref{5non-spin1}, \eqref{5non-spin2}. Note that the image of $\eta^*$ is given in Lemma \ref{lemma2.17}. So we compute the image of  $(\tilde{\eta}_{2^r})^*: [\Sigma^{3+k}M(\mathbb{Z}/{2^r}),Top/O]\to \pi_{5+k}(Top/O)$ given in the following lemma. 
	
	\begin{lemma}\label{etatilde}\noindent 
 The following statements hold.
		\begin{itemize}
			\item[(a)] For each $r\geq 1$, the image of the map $(\tilde{\eta}_{2^r})^*:[\Sigma^{3+k}M(\z/{2^r}),Top/O]\to\pi_{5+k}(Top/O)$ is 
			\begin{itemize}
				\item[(i)] trivial for $k=1,2,3,7,8,9.$
				\item[(ii)] $\z/2$ for $k=5,10.$
			\end{itemize}
			\item [(b)] The image of $(\tilde{\eta}_{2^r})^*:[\Sigma^7 M(\z/{2^r}),Top/O]\to \pi_{9}(Top/O)$ equals
				\begin{itemize}
					\item [(i)] $\z/2\oplus\z/2$ if $r=1$ and $2.$
					\item [(ii)] $\z/2$ if $r\geq 3.$
			\end{itemize}
			\item[(c)] The image of $(\tilde{\eta}_{2^r})^*:[\Sigma^9 M(\z/{2^r}),Top/O]\to \pi_{11}(Top/O)$ equals 
			\begin{itemize}
				\item[(i)] $\z/4$ if $r=1.$
				\item [(ii)] $\z/2$ if $r\geq 2.$
			\end{itemize}
		\end{itemize}
	\end{lemma}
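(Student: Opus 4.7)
The plan is to split the image of $(\tilde{\eta}_{2^r})^{*}$ into a top-cell contribution equal to $\mathrm{Im}(\eta^{*})$ (already controlled by Lemma \ref{lemma2.17}) and a bottom-cell contribution governed by Toda brackets, and then to analyse each case by inspection of the $2$-primary parts of $\Theta_{*}$. The long exact sequence (\ref{longMoore}) furnishes the short exact sequence
\[
0\to\pi_{4+k}(Top/O)/2^r\xrightarrow{(q_{4+k})^{*}}[\Sigma^{3+k}M(\mathbb{Z}/{2^r}),Top/O]\xrightarrow{(i_{3+k})^{*}}\pi_{3+k}(Top/O)[2^r]\to 0.
\]
Since $q_{4+k}\circ\tilde{\eta}_{2^r}=\eta$ by construction of $\tilde{\eta}_{2^r}$, the composite $(\tilde{\eta}_{2^r})^{*}\circ(q_{4+k})^{*}$ equals $\eta^{*}\colon\pi_{4+k}(Top/O)\to\pi_{5+k}(Top/O)$, so $\mathrm{Im}(\eta^{*})\subseteq\mathrm{Im}((\tilde{\eta}_{2^r})^{*})$. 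Any remaining contribution comes from a lift $\alpha$ of some $y\in\pi_{3+k}(Top/O)[2^r]$ along $(i_{3+k})^{*}$; the composition $\alpha\circ\tilde{\eta}_{2^r}$ then represents a class in the Toda bracket $\langle y,2^r,\eta\rangle\subseteq\pi_{5+k}(Top/O)$ modulo the standard indeterminacy $y\cdot\pi_{2}^{s}+\mathrm{Im}(\eta^{*})$.

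Part (a) then follows by inspection of $\Theta_{*}$. For $k\in\{1,2,3,9\}$ one has $\pi_{3+k}(Top/O)[2^r]=0$ (using $\Theta_{4}=\Theta_{5}=\Theta_{6}=\Theta_{12}=0$), so only the top cell can contribute and it vanishes by Lemma \ref{lemma2.17}; for $k=7$ the target $\Theta_{12}$ is itself zero; for $k=8$ the $2$-primary part of the target $\Theta_{13}=\mathbb{Z}/3$ vanishes, so neither contribution can appear. This proves (a)(i). For (a)(ii), Lemma \ref{lemma2.17} gives $\mathrm{Im}(\eta^{*})\cong\mathbb{Z}/2$ in each of the cases $k=5,10$; when $k=10$ the bottom-cell Toda bracket is zero because $\Theta_{13}[2^r]=0$, and when $k=5$ the bracket $\langle y,2^r,\eta\rangle\subseteq\Theta_{10}$ lies in the unique $2$-primary $\mathbb{Z}/2\subseteq\Theta_{10}=\mathbb{Z}/6$, already occupied by $\mathrm{Im}(\eta^{*})$.

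For parts (b) and (c) the Toda-bracket piece is essential and its size depends on $r$. In (b), $\Theta_{7}=\mathbb{Z}/{28}$ has $2$-primary part $\mathbb{Z}/4$, $\Theta_{9}=(\mathbb{Z}/2)^{3}$, and $\mathrm{Im}(\eta^{*})\cong\mathbb{Z}/2$ is generated by $\eta\epsilon$. For $y$ a generator of the $2^r$-torsion subgroup of the $2$-primary part of $\Theta_{7}$, explicit evaluation of $\langle y,2^r,\eta\rangle$ (using the containment $2^{r-s}\langle y,2^s,\eta\rangle\subseteq\langle y,2^r,\eta\rangle$ together with known Toda-bracket identities in $\pi_{*}^{s}$) will yield a class independent of $\eta\epsilon$ when $r\in\{1,2\}$, giving the image $\mathbb{Z}/2\oplus\mathbb{Z}/2$, and a class which is $2$-divisible and hence absorbed into $\mathrm{Im}(\eta^{*})$ when $r\geq 3$, leaving image $\mathbb{Z}/2$. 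In (c), $\Theta_{9}=(\mathbb{Z}/2)^{3}$ and $\Theta_{11}=\mathbb{Z}/{992}$ with $2$-primary part $\mathbb{Z}/{32}$; for $r=1$ the bracket $\langle y,2,\eta\rangle$ for a suitable $y\in\Theta_{9}$ produces a class of order $4$ whose double is precisely the $\mathbb{Z}/2$ top-cell generator, yielding a non-split extension and image $\mathbb{Z}/4$; for $r\geq 2$ the bracket is $2$-divisible and so is absorbed into $\mathrm{Im}(\eta^{*})$, giving image $\mathbb{Z}/2$.

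The principal technical obstacle is the explicit evaluation of these Toda brackets and, in case (c), the verification of the non-split extension when $r=1$. The approach will be to transport the bracket computation to the stable stems $\pi_{*}^{s}$ via the fibre sequence $\Omega(G/Top)\xrightarrow{\omega}Top/O\xrightarrow{\psi}G/O\xrightarrow{\phi}G/Top$ together with the $J$-homomorphism, to identify the resulting brackets using known tables of stable homotopy groups, and to control the indeterminacies via Moss's convergence theorem, or direct manipulation in the Adams spectral sequence.
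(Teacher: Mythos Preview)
Your treatment of part (a) is correct and matches the paper's argument: both use the factorisation $(\tilde{\eta}_{2^r})^{*}\circ(q_{4+k})^{*}=\eta^{*}$ together with the vanishing of the relevant source, target, or $2$-primary torsion. (One minor point: you invoke $\Theta_{4}=0$, which is the open $4$-dimensional smooth Poincar\'e conjecture; what you actually need, and what holds, is $\pi_{4}(Top/O)=0$.)

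For parts (b) and (c) you take a different route from the paper. You frame the bottom-cell contribution as a Toda bracket $\langle y,2^{r},\eta\rangle$ in $\pi_{*}(Top/O)$, whereas the paper outsources the hard step: for (b) with $r=1$ it quotes a theorem of Baues identifying $(\tilde{\eta}_{2})^{*}$ with the inclusion of $\Gamma_{9}(PL/O)$ into $\pi_{9}(PL/O)$, and for higher $r$ it compares the Moore spectra via $B(\chi^{1}_{r})$ and reads off the image from the commutative ladder; for (c) with $r=1$ it quotes the non-split extension (\ref{MORSH}) from Crowley--Schick--Steimle, which encodes exactly the order-$4$ phenomenon you predict. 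Your approach would be more self-contained if completed, while the paper's is quicker given the cited literature.

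There is, however, a genuine gap in your proposal for (b) and (c): the bracket values are asserted, not computed. You write that the bracket ``will yield a class independent of $\eta\epsilon$'' in (b), and ``produces a class of order $4$'' in (c), but give no argument. Your proposed reduction to $\pi_{*}^{s}$ via the fibre sequence $\Omega(G/Top)\to Top/O\to G/O$ runs into an immediate obstruction in case (b): $\pi_{7}(G/O)=0$, so $\psi_{*}(y)=0$ and the image in $G/O$ carries no information; lifting instead to $\tilde{y}\in\pi_{8}(G/Top)\cong\mathbb{Z}$ gives a non-torsion class for which $\langle\tilde{y},2^{r},\eta\rangle$ is not even defined. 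Similar difficulties arise in (c). The brackets you need genuinely live in the $\pi_{*}^{s}$-module $\pi_{*}(Top/O)$, and neither Moss's theorem nor the Adams spectral sequence for the sphere applies without an intermediary identification. Until these evaluations are actually supplied (or replaced by the references the paper uses), parts (b) and (c) remain a plan rather than a proof.
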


Before proving this lemma, we first prove a preliminary result that will be used in the argument.
\begin{lemma}\label{etatildeG/O}
   The map $(\tilde{\eta}_{2^r})^*: [\Sigma^7 M(\mathbb{Z}/{2^r}), G/O]\to \pi_9(G/O)$ is surjective.
\end{lemma}
\begin{proof}
We observe from the long exact sequence induced from \eqref{Moore} along $G/O$ that $$[\Sigma^7 M(\mathbb{Z}/{2^r}), G/O]\cong \mathbb{Z}/{2^r}\oplus\mathbb{Z}/2.$$ Since $q^{2^r}_8\circ \tilde{\eta}_{2^r}=\eta,$ we obtain the following commutative diagram:  
\[\begin{tikzcd}
	{\pi_8(G/O)} && {[\Sigma^7 M(\mathbb{Z}/{2^r}), G/O]\cong \mathbb{Z}/{2^r}\oplus\mathbb{Z}/2} & 0 \\
	{\pi_9(G/O)} \\
	{0.}
	\arrow["{(q^{2^r}_8)^*}", from=1-1, to=1-3]
	\arrow["{\eta_8^*}"', from=1-1, to=2-1]
	\arrow[from=1-3, to=1-4]
	\arrow["{(\tilde{\eta}_{2^r})^*}", from=1-3, to=2-1]
	\arrow[from=2-1, to=3-1]
\end{tikzcd}\]
Here, the surjectivity of \( (q^{2^r}_8)^* : \pi_8(G/O) \to [\Sigma^7 M(\mathbb{Z}/{2^r}), G/O] \) follows from the long exact sequence induced from \eqref{Moore} along \( G/O \), together with \( \pi_7(G/O) = 0 \), and \( \eta_8^* : \pi_8(G/O) \to \pi_9(G/O) \) is surjective by Lemma~\ref{etaG/O}. The commutativity of the diagram then implies that the map   
\( (\tilde{\eta}_{2^r})^* : [\Sigma^7 M(\mathbb{Z}/{2^r}), G/O] \to \pi_9(G/O) \) is surjective as well.    
\end{proof}
    
	\begin{proof}[Proof of Lemma~\ref{etatilde}]
 We examine each value of $k$ one by one to prove the result. By the definition of $\tilde{\eta}_{2^r},$ we have the following commutative diagram:
       \begin{equation}\label{tildeetadiagram}
           \begin{tikzcd}
               {\pi_{4+k}(Top/O)}\arrow[rr,"(q^{2^r}_{4+k})^*"]\arrow[dr,"\eta^*"']&&{[\Sigma^{3+k}M(\mathbb{Z}/{2^r}),Top/O]}\arrow[dl,"(\Tilde{\eta}_{2^r})^*"]\\
               & {\pi_{5+k}(Top/O).}
           \end{tikzcd}
       \end{equation}
 
  For $k=1,$ and $7$, the result is immediate due to the triviality of $\pi_{5+k}(Top/O).$ Additionally, from the exact sequence \eqref{longMoore}, it can be observed that the groups $[\Sigma^{5} M(\mathbb{Z}/{2^r}), Top/O]$ and $[\Sigma^{11}M(\mathbb{Z}/{2^r}),Top/O]_{(3)}$ are both trivial. Consequently, the map $$(\tilde{\eta}_{2^r})^*:[\Sigma^{3+k}M(\z/{2^r}),Top/O]\to \pi_{5+k}(Top/O)$$ is trivial for $k=2$ and $8.$ This concludes the cases $k=2$ and $8.$ 
       
For both cases $k=3$ and $9$, the map $(\eta)^*:\pi_{4+k}(Top/O)\to\pi_{5+k}(Top/O)$ is trivial as per Lemma \ref{lemma2.17}. Moreover, the map $(q^{2^r}_{4+k})^*:\pi_{4+k}(Top/O)\to[\Sigma^{3+k}M(\mathbb{Z}/2^r),Top/O]$ is surjective, as seen from the exact sequence \eqref{longMoore}. The combination of these observations with the commutativity of the diagram \eqref{tildeetadiagram} implies that the result holds for these cases.
  
 \textbf{Consider the cases $k=5$ and $10$ :} In these particular cases, Lemma \ref{lemma2.17} implies that the image of the map $\eta^*:\pi_{4+k}(Top/O)\to\pi_{5+k}(Top/O)$ is $\mathbb{Z}/2$. From the sequence \eqref{longMoore}, we can deduce the following :
 \begin{itemize}
     \item The map $(q^{2^r}_{9})^*:\pi_9(Top/O)\to[\Sigma^{8} M(\mathbb{Z}/{2^r}),Top/O]$ is injective,
     \item $[\Sigma^8 M(\mathbb{Z}/{2^r}),Top/O]_{(3)}$ is trivial,
     \item The map $(q^{2^r}_{14})^*: \pi_{14}(Top/O)\to [\Sigma^{13}M(\mathbb{Z}/{2^r}),Top/O]$ is an isomorphism.
 \end{itemize}
By combining these observations in the commutative diagram \eqref{tildeetadiagram}, we can obtain the desired result.

 \textbf{The case $k=4$:} Using the long exact sequence induced from \eqref{Moore} along $G/Top$ and $G/O$, we get $[\Sigma^8 M(\mathbb{Z}/{2^{r}}), G/Top]=0$ and $[\Sigma^7 M(\mathbb{Z}/{2^r}), G/O]=\mathbb{Z}/{2^r}\oplus\mathbb{Z}/2$, respectively.

Since $[\Sigma^7 M(\mathbb{Z}/{2^r}), \Omega(G/Top)]=0$, the long exact sequence 
\[\cdots\to [\Sigma^7M(\mathbb{Z}/{2^r}), \Omega (G/Top)]\xrightarrow{\omega_*}[\Sigma^7 M(\mathbb{Z}/{2^r}), Top/O]\xrightarrow{\psi_*}[\Sigma^7M(\mathbb{Z}/{2^r}), G/O],\]
induced from the fiber sequence $\Omega(G/Top)\xrightarrow{\omega}Top/O\xrightarrow{\psi}G/O\xrightarrow{\phi}G/Top$ implies that the map $$\psi_*:[\Sigma^7 M(\mathbb{Z}/{2^r}), Top/O]\to [\Sigma^7 M(\mathbb{Z}/{2^r}), G/O]$$ is injective. 

%We note from \eqref{longMoore} that for $r\geq 2$, the group $[\Sigma^7 M(\mathbb{Z}/{2^r}), Top/O]$ is either $\mathbb{Z}/2\oplus\mathbb{Z}/4$ or $\mathbb{Z}/8$ and for $r=1$, $[\Sigma^7 M(\mathbb{Z}/2), Top/O]$ is either $\mathbb{Z}/2\oplus\mathbb{Z}/2$ or $\mathbb{Z}/4$. Now from these injectivity of the maps $\psi_*: [\Sigma^7 M(\mathbb{Z}/{2^r}), Top/O]\to [\Sigma^7 M(\mathbb{Z}/{2^r}), G/O]$ and $[\Sigma^7 M(\mathbb{Z}/{2^r}), G/O]\cong \mathbb{Z}/{2^r}\oplus\mathbb{Z}/2$, we obtain $\psi_*: [\Sigma^7 M(\mathbb{Z}/{2^r}), Top/O]\to [\Sigma^7 M(\mathbb{Z}/{2^r}), G/O]$ is an isomorphism for $r=1$ and $2$. Thus,
%\[[\Sigma^7 M(\mathbb{Z}/2), Top/O]\cong \mathbb{Z}/2\oplus\mathbb{Z}/2\quad \text{and}~~~~[\Sigma^7 M(\mathbb{Z}/{2^2}), Top/O]\cong \mathbb{Z}/2\oplus\mathbb{Z}/4.\]
We note from \eqref{longMoore} that for \( r \geq 2 \), the group \( [\Sigma^7 M(\mathbb{Z}/2^r), Top/O] \) is either \( \mathbb{Z}/2 \oplus \mathbb{Z}/4 \) or \( \mathbb{Z}/8 \), and for \( r = 1 \), it is either \( \mathbb{Z}/2 \oplus \mathbb{Z}/2 \) or \( \mathbb{Z}/4 \). Now, using the injectivity of the map  
\[
\psi_*: [\Sigma^7 M(\mathbb{Z}/2^r), Top/O] \longrightarrow [\Sigma^7 M(\mathbb{Z}/2^r), G/O],
\] 
together with the fact that \( [\Sigma^7 M(\mathbb{Z}/2^r), G/O] \cong \mathbb{Z}/2^r \oplus \mathbb{Z}/2 \), we conclude that \( \psi_* \) is an isomorphism for \( r = 1 \) and \( r = 2 \). Therefore,
\[
[\Sigma^7 M(\mathbb{Z}/2), Top/O] \cong \mathbb{Z}/2 \oplus \mathbb{Z}/2 \quad \text{and} \quad [\Sigma^7 M(\mathbb{Z}/4), Top/O] \cong \mathbb{Z}/2 \oplus \mathbb{Z}/4.
\]
%\textcolor{blue}{For \( r = 1 \), since \( \pi_8(Top/O) = \mathbb{Z}/2 \), it follows from \cite[Page 29, Corollary 1.6.12]{baues} that there is a split short exact sequence:}
 %\begin{equation*}%\label{sigma7M}
%\color{blue}  0\to Ext(\z/2, \pi_8(Top/O))\cong \mathbb{Z}/2\to [\Sigma^7 M(\z/{2}), Top/O]\to Hom(\z/2,\pi_{7}(Top/O))\cong \mathbb{Z}/2\to 0.
% \end{equation*} 
For the case $r=1$ and $2$, we consider the following commutative diagram:
\[\begin{tikzcd}
	{[\Sigma^7 M(\mathbb{Z}/{2^r}), Top/O]} && {\pi_9(Top/O)} \\
	{[\Sigma^7 M(\mathbb{Z}/{2^r}), G/O]} && {\pi_9(G/O)} \\
	&& {0.}
	\arrow["{(\tilde{\eta}_{2^r})^*}", from=1-1, to=1-3]
	\arrow["{\psi_*}"', "\cong",from=1-1, to=2-1]
	\arrow["{\psi_*}", from=1-3, to=2-3]
	\arrow["{(\tilde{\eta}_{2^r})^*}"', from=2-1, to=2-3]
	\arrow[from=2-3, to=3-3]
\end{tikzcd}\]
Since both the maps $\psi_*:\pi_9(Top/O)\to \pi_9(G/O)$ and  \( (\tilde{\eta}_{2^r})^* : [\Sigma^7 M(\mathbb{Z}/{2^r}), G/O] \to \pi_9(G/O) \) are surjective by Lemma~\ref{etatildeG/O}, the above commutative diagram implies that the image of \( (\tilde{\eta}_{2^r})^* : [\Sigma^7 M(\mathbb{Z}/{2^r}), Top/O] \to \pi_9(Top/O) \) is \( \mathbb{Z}/2 \oplus \mathbb{Z}/2 \) for $r=1$ and $2$. This completes the proof of part {(b)}{(i)}.

Let's now assume that $r\geq 3$. Then the commutative diagram \eqref{diagrammoore} induces the following diagram of long exact sequences along $Top/O$.  
\begin{center}
	\begin{tikzcd}[column sep=1.2 em]
		\arrow[r,"0"]&{\pi_8(Top/O)}\arrow[r,"(q^{2^r}_8)^*"]\arrow[d,-,double equal sign distance,double]&{[\Sigma^7 M(\z/{2^r}),Top/O]}\arrow[d,"(\lambda_{1,r})^*"']\arrow[r,"(i^{2^r}_7)^*"]&{\pi_7(Top/O)}\arrow[d,"\times 2^{r-1}"]\arrow[r,"\times 2^r"]&{\pi_7(Top/O)}\arrow[d,-,double equal sign distance,double]\\
		\arrow[r,"0"']&{\pi_8(Top/O)}\arrow[r,"(q^{2}_8)^*"']&{[\Sigma^7 M(\z/2),Top/O]\cong \z/2\oplus\z/2}\arrow[r,"(i^{2}_7)^*"']&{\pi_7(Top/O)}\arrow[r,"\times 2"']&{\pi_7(Top/O).}
	\end{tikzcd}
\end{center}
From this, we deduce that the image of $$(\lambda_{1,r})^*: [\Sigma^7 M(\mathbb{Z}/{2^r}), Top/O]\to [\Sigma^7 M(\mathbb{Z}/2), Top/O]$$ is $\mathbb{Z}/2$ and $[\Sigma^7 M(\mathbb{Z}/{2^r}), Top/O]\cong \mathbb{Z}/2\oplus\mathbb{Z}/4$ for $r\geq 3$.

%From this, we can easily deduce the following:
%\begin{itemize}
%	\item  If $r\geq 3,$ then the image of $(B(\chi^1_{r}))^*: [\Sigma^7 M(\z/{2^r}),PL/O]\to [\Sigma^7 M(\z/2),PL/O]$ is $\z/2.$
%	\item If $r=2,$ then the map $(B(\chi^1_{r}))^*: [\Sigma^7 M(\z/{4}),PL/O]\to [\Sigma^7 M(\z/2),PL/O]$ has image $\z/2\oplus \z/2.$
%\end{itemize}
Combining these observations in the following commutative diagram given by the  relation \eqref{Bchirelation}
\begin{center}
	\begin{tikzcd}
		{[\Sigma^7 M(\mathbb{Z}/{2^r}),Top/O]}\arrow[rr,"(\lambda_{1,r})^*"]\arrow[dr,"(\Tilde{\eta}_{2^r})^*"']&& {[\Sigma^7 M(\mathbb{Z}/2),Top/O]\cong \mathbb{Z}/2\oplus\mathbb{Z}/2}\arrow[dl,"(\Tilde{\eta}_{2})^*"]\\
		& {\pi_9(Top/O)}
	\end{tikzcd}
\end{center}
establishes the statement {(b)}{(ii)}. 

\textbf{The case $k=6$:} In this case, it follows from the long exact sequence \eqref{longMoore} for $r=1$, Lemma \ref{lemma2.17} and \cite[Lemma 2.3]{DCTSWS2018} that  there is a non-split sequence 
\begin{equation*}%\label{MORSH}
0\to \mathbb{Z}/2\left(\subset \pi_{10}(Top/O)\right)\to [\Sigma^9 M(\z/{2}), Top/O]=\z/4\oplus\z/2\oplus\z/2 \to \pi_{9}(Top/O)\to 0, 
\end{equation*}
where the map $(q^2_{10})^*: \pi_{10}(Top/O)\to [\Sigma^9 M(\mathbb{Z}/2),Top/O]$ sends the generator $\mu \circ\eta$ to the element of order $2$ in the summand $\z/4$. Further, from Lemma \ref{lemma2.17}, the image of $\eta^*:\pi_{10}(Top/O)\to \pi_{11}(Top/O)$  is $\mathbb{Z}/2$. Combining these facts in the diagram \eqref{tildeetadiagram} implies that the image of $(\tilde{\eta}_2)^*:[\Sigma^9 M(\z/2),Top/O]\to \pi_{11}(Top/O)$ is $\z/4.$ This completes the proof of the statement {(c)}{(i)}. Assume $r\geq 2$ and consider the following diagram %induced from \eqref{diagrammoore}
\begin{center}
	\begin{tikzcd}
		{0}\arrow[r]&{\mathbb{Z}/2}\arrow[r]\arrow[d,-,double equal sign distance,double]&{[\Sigma^9 M(\z/{2^r}),Top/O]}\arrow[d,"(\lambda_{1,r})^*"]\arrow[r,"(i_9)^*"]&{\pi_9(Top/O)}\arrow[d,"0"']\arrow[r,"0"]&{0}\\
		{0}\arrow[r]&{\mathbb{Z}/2}\arrow[r]&{[\Sigma^9 M(\z/2),Top/O]\cong \z/4\oplus\z/2\oplus\z/2}\arrow[r,"(i_9)^*"']&{\pi_9(Top/O)}\arrow[r,"0"']&{0,}
	\end{tikzcd}
\end{center}
where the first and second rows are induced by the cofiber sequences of the mapping cones $\Sigma^9 M(\z/2^r)$ and $\Sigma^9 M(\z/2)$ respectively as in \eqref{diagrammoore}. From this diagram, we can deduce that $[\Sigma^9 M(\z/{2^r}), Top/O]\cong   \bigoplus\limits_{i=1}^4 \mathbb{Z}/2$. Now apply the same argument as in the case $r=1$ using the diagram \eqref{tildeetadiagram},  we get that the image of $(\tilde{\eta}_{2^r})^*:[\Sigma^9 M(\z/2^r),Top/O]\to \pi_{11}(Top/O)$ is $\z/2.$ This proves the statement {(c)}{(ii)}.

This finishes the proof of the lemma.
	\end{proof}
The following theorem determines the concordance inertia group of $M\times\mathbb{S}^k$ for some $k\geq 1$.
\begin{theorem}\label{inerM^5}
	Let $M$ be a simply connected, closed, oriented, smooth $5$-manifold with homology of the form (\ref{homologyM^5}). The following statements hold.
	\begin{enumerate}
	
		\item[(1)] If $M$ is a spin manifold, then $I_c(M\times\s^k)=0$ for $k\geq1.$ 
			
		\item[(2)] Suppose $M$ is a non-spin manifold.
                  \begin{enumerate}
                      \item $I_c(M\times\mathbb{S}^k)=0$ for $k=2,3,7,8,$ or $9.$
                      \item For $k=4,5,6,$ or $10,$
                      \begin{enumerate}
                      \item if $M$ satisfies \eqref{5non-spin1}, then 
                       $I_c(M\times\mathbb{S}^k)=\mathbb{Z}/2.$
                      \item if $M$ satisfies \eqref{5non-spin2}, then 
                      \begin{enumerate}
                          \item  	\[ I_c(M\times\s^4)= \begin{cases}  \mathbb{Z}/2\oplus\mathbb{Z}/2, &\mbox{ if $r=1$ or $2$},\\ \mathbb{Z}/2, &\mbox{if $r\geq 3.$}  \end{cases} \] 
				\item $I_c(M\times\s^5)=\mathbb{Z}/2.$
				\item  \[I_c(M\times\s^6)=\begin{cases} 
					 \mathbb{Z}/4, & \mbox{if $r=1$},\\
					 \mathbb{Z}/2, &\mbox{if $r\geq 2.$}
				\end{cases}\]
                    \item $I_c(M\times\mathbb{S}^{10})=\mathbb{Z}/2.$
                    \end{enumerate}
                      \end{enumerate}
                  \end{enumerate}
			Here $r$ is the least positive integer such that $\mathbb{Z}/{2^r}$ is in the torsion part of $H_2(M;\mathbb{Z}).$
		\end{enumerate}
	\end{theorem}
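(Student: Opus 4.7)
The plan is to combine the stable decompositions of $M$ obtained in Proposition \ref{5manspin} and in \eqref{5non-spin1}, \eqref{5non-spin2} with the reduction provided by Lemma \ref{concordanceinertia}, and then to read off the final answers from Lemmas \ref{lemma2.17} and \ref{etatilde}. In every case the top-cell stable attaching map of $M$ is supported on a single distinguished wedge summand, so Lemma \ref{concordanceinertia} identifies $I_c(M\times\mathbb{S}^k)$ with the image in $\pi_{5+k}(Top/O)$ of the map induced (at the $Top/O$ level) by that one attaching map.

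First, in the spin case I would invoke Proposition \ref{5manspin}, which splits $\mathbb{S}^5$ off as a stable wedge summand of $M$. The top-cell attaching map is therefore stably null-homotopic, and Lemma \ref{concordanceinertia} forces $I_c(M\times\mathbb{S}^k)=0$ for every $k\geq 1$ (the hypothesis $5+k\geq 5$ is automatic), yielding part (1).

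Next, in the non-spin case $M$ satisfies exactly one of \eqref{5non-spin1} or \eqref{5non-spin2}. If \eqref{5non-spin1} holds, the only non-trivial component of the top-cell attaching map is the Hopf map $\eta:\mathbb{S}^4\to\mathbb{S}^3$ producing the $\Sigma\mathbb{C}P^2$ summand, and Lemma \ref{concordanceinertia} identifies
\[ I_c(M\times\mathbb{S}^k)\cong \mathrm{Im}\bigl(\eta^*:\pi_{4+k}(Top/O)\to\pi_{5+k}(Top/O)\bigr). \]
The values asserted in (2)(a) and (2)(b)(i) then follow by direct inspection of Lemma \ref{lemma2.17}. If instead \eqref{5non-spin2} holds, the non-trivial component of the attaching map is $\tilde{\eta}_{2^r}:\mathbb{S}^4\to\Sigma^2 M(\mathbb{Z}/2^r)$, so Lemma \ref{concordanceinertia} gives
\[ I_c(M\times\mathbb{S}^k)\cong \mathrm{Im}\bigl((\tilde{\eta}_{2^r})^*:[\Sigma^{3+k}M(\mathbb{Z}/2^r),Top/O]\to\pi_{5+k}(Top/O)\bigr), \]
and the values in (2)(a) and (2)(b)(ii) are read off parts (a), (b), (c) of Lemma \ref{etatilde}.

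The principal computational work, namely pinning down the images of $\eta^*$ and $(\tilde{\eta}_{2^r})^*$ across the relevant degrees, has already been carried out in Lemmas \ref{lemma2.17} and \ref{etatilde}. What remains for this theorem is essentially organizational: verifying that among the wedge summands of $M^{(3)}$ only the distinguished one supports a non-trivial top-cell attaching map (which is built into the construction in \S\ref{stabletypeM^5}), and matching the resulting images against the case distinctions in the statement. The one subtlety I expect is in case \eqref{5non-spin2}, where one must be careful that the integer $r$ appearing in the final answer is precisely the minimal $s$ for which a $\Sigma^2 M(\mathbb{Z}/2^s)$ summand carries the $\tilde{\eta}$-component of the attaching map, as fixed in the discussion preceding \eqref{5non-spin2}. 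With that identification in place, the theorem follows by case-by-case readoff.
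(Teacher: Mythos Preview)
Your proposal is correct and follows essentially the same approach as the paper: combine the stable splittings of Proposition \ref{5manspin} and \eqref{5non-spin1}, \eqref{5non-spin2} with Lemma \ref{concordanceinertia}, then read off the answers from Lemmas \ref{lemma2.17} and \ref{etatilde}. The paper's own proof is a one-line citation of exactly these ingredients; your write-up simply unpacks the case analysis in more detail.
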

\begin{proof} 
	The results {(1)} and  {(2)} are derived from Lemma~\ref{concordanceinertia} by using Proposition~\ref{5manspin} and the stable splitting \eqref{5non-spin1}, \eqref{5non-spin2}, along with the calculations from Lemmas~\ref{lemma2.17} and~\ref{etatilde}.
\end{proof}

As a direct consequence of the stable homotopy type of $M$ described in Proposition \ref{5manspin} and \eqref{5non-spin1}, \eqref{5non-spin2}, we have the following:

\begin{prop}\label{cM^5}
	Let $M$ be a simply-connected, closed, oriented, smooth $5$-dimensional manifold with homology of the form (\ref{homologyM^5}). The following holds:
	\begin{itemize}
		\item [(a)] If $M$ is a spin manifold, then $[\Sigma^k M,Top/O]$ is isomorphic to $$\pi_{5+k}(Top/O)\oplus \\\bigoplus\limits_{i=1}^d\left(\pi_{2+k}(Top/O)\oplus \pi_{3+k}(Top/O)\right)\oplus  
   \bigoplus\limits_{j=1}^t[\Sigma^{2+k} M(\z/p_j^{r_j}),Top/O].$$
	%	\item [(b)] If $M$ is a non-spin manifold, then $[\Sigma^k M,Top/O]$ is isomorphic to either $$ [\Sigma^{k+1}\mathbb{C}P^2,Top/O]\oplus 
    %\bigoplus\limits_{i=1}^{d-1}\left(\pi_{2+k}(Top/O)\oplus \pi_{3+k}(Top/O)\right)\oplus \bigoplus\limits_{j=1}^t[\Sigma^{2+k}M(\z/{p_j}^{r_j}),Top/O] 
    %\oplus \pi_{2+k}(Top/O)$$
	%\begin{center}
	 %   or
	%\end{center}			
%	$$[\Sigma^{k}Cone(\tilde{\eta}_{2^r}),Top/O]\oplus 
 %   \bigoplus\limits_{i=1}^{d}\left(\pi_{2+k}(Top/O)\oplus\pi_{3+k}(Top/O)\right)\oplus\bigoplus\limits_{j=2}^t[\Sigma^{2+k}M(\z/{p_j}^{r_j}),Top/O]$$

   \item[(b)] If $M$ is a non-spin manifold, then $[\Sigma^k M, Top/O]$ is isomorphic to either
\begin{align*}
&[\Sigma^{k+1}\mathbb{C}P^2,Top/O]\oplus 
\bigoplus_{i=1}^{d-1}\left(\pi_{2+k}(Top/O)\oplus \pi_{3+k}(Top/O)\right)\\
&\qquad\oplus \bigoplus_{j=1}^t[\Sigma^{2+k}M(\mathbb{Z}/{p_j}^{r_j}), Top/O] 
\oplus \pi_{2+k}(Top/O)
\end{align*}
\begin{center}
    or
\end{center}
\begin{align*}
&[\Sigma^{k} Cone(\tilde{\eta}_{2^r}),Top/O]\oplus 
\bigoplus_{i=1}^{d}\left(\pi_{2+k}(Top/O)\oplus\pi_{3+k}(Top/O)\right)\\
&\qquad\oplus\bigoplus_{j=2}^t[\Sigma^{2+k}M(\mathbb{Z}/{p_j}^{r_j}),Top/O].
\end{align*}
	\end{itemize}
\end{prop}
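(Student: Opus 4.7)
My plan is to derive the statement as an immediate corollary of the stable decompositions of $M$ established in Proposition \ref{5manspin} and formulas \eqref{5non-spin1}, \eqref{5non-spin2}. The key observation I will invoke is that $Top/O$ is an $H$-group (in fact an infinite loop space), so the contravariant functor $[-, Top/O]$ sends wedges to direct products, which become direct sums since the values are abelian groups.

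Concretely, I would apply $\Sigma^k$ to each of the three stable splittings of $M$, distribute the suspension across wedge summands, and then evaluate $[-, Top/O]$ on each piece separately. A sphere factor $\mathbb{S}^n$ contributes $\pi_{n+k}(Top/O)$; a suspended Moore spectrum $\Sigma^2 M(\mathbb{Z}/p_j^{r_j})$ contributes $[\Sigma^{k+2} M(\mathbb{Z}/p_j^{r_j}), Top/O]$; and the leading non-spin factor (either $\Sigma \mathbb{C}P^2$ from \eqref{5non-spin1} or $Cone(\tilde{\eta}_{2^r})$ from \eqref{5non-spin2}) contributes $[\Sigma^{k+1}\mathbb{C}P^2, Top/O]$ or $[\Sigma^k Cone(\tilde{\eta}_{2^r}), Top/O]$ respectively. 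These last two terms are to be left unresolved in the statement, since they encode precisely the data distinguishing the non-spin structure and are the obstruction to further splitting.

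Since the argument is essentially bookkeeping applied to previously established splittings, I do not expect any substantial technical obstacle. The main points requiring care will be keeping track of the suspension indices on the Moore-spectrum summands (the extra shift by $2$ coming from the fact that the Moore spectra enter the $3$-skeleton already as $\Sigma^2 M(\mathbb{Z}/p_j^{r_j})$) and recognizing that the leading non-spin cell must not be further decomposed.
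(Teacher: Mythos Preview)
Your proposal is correct and matches the paper's own approach exactly: the paper states the proposition ``as a direct consequence of the stable homotopy type of $M$ described in Proposition \ref{5manspin} and \eqref{5non-spin1}, \eqref{5non-spin2}'' with no further argument. Your bookkeeping description (suspend the stable wedge decomposition, apply $[-,Top/O]$ using the infinite loop space structure to pass to the stable range, and read off each summand) is precisely the intended proof.
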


In conclusion, according to Corollary \ref{concorMtimesSk}, if $M$ is either a 4-dimensional manifold or a simply connected 5-dimensional manifold, then the smooth concordance structure set $\mathcal{C}(M\times \mathbb{S}^k)$ can be determined using Proposition \ref{cM^4} and Proposition \ref{cM^5}.

	%%%%%%%%%%%%%%%%%%%%%%%%%%%%%%%%%%%%%%%%%%%%%%%%%%%%%%%%%%%%%%%%%%%%%%%%%%%%%%%%%%%%%%%%%%%%%%%%%%%%%%%%%%%%%%%%%%%%%%%%%%%%%%%%%%%%%%%%%%%%%%%%%%%%%%%%%%%%%%%%%%%%%%%%%%%%%%%%%%%%%%%%%%%%%%%%%%%%%%
	
\section{Smooth structures on \texorpdfstring{$\mathbb{C}P^2\times\mathbb{S}^k$}{}}\label{application}
In this section, we use the computations from the previous sections to provide a diffeomorphism classification of smooth structures on $\mathbb{C}P^2\times\mathbb{S}^k,$ where $4\leq k\leq 6.$ To do this, we recall the surgery exact sequence and some related concepts from surgery theory \cite{IMRLBW80, ranicki, DC10, DCIH15}.
\begin{defn}[Relative Smooth Structure Set]
    Consider a compact smooth manifold $M$ of dimension $n$ with possibly empty boundary $\partial M.$ The \textit{relative smooth structure set} of $M,$
    $$\mathcal{S}^{Diff}(M ~\rel ~\partial M):= \{(N,f): f: (N, \partial N) \to (M,\partial M)\}/{\simeq}$$
consists of the equivalence classes of pairs $(N,f),$ where $N$ is a compact smooth manifold with boundary $\partial N$ and $f: (N, \partial N) \to (M,\partial M)$ is a homotopy equivalence such that the restircted map $f\vert_{\partial N}: \partial N \to \partial M$ is a diffeomorphism. Two pairs $(N_1,f_1)$ and $(N_2,f_2)$ are equivalent if there exists a diffeomorphism $H: (N_1, \partial N_1) \to (N_2,\partial N_2) $ such that $f_2\circ H$ is homotopic to $f_1$ rel $\partial N_1.$ The equivalence class of $(N,f)$ is denoted by $[(N,f)].$ The base point of $\mathcal{S}^{Diff}(M \rel \partial M)$ is the equivalence class of $(M, Id).$

If $\partial M$ is empty, we abbreviate $\mathcal{S}^{Diff}(M):=\mathcal{S}^{Diff}(M ~\rel ~\varnothing).$ 
\end{defn} 
 
The main tool for calculating the smooth structure set $\mathcal{S}^{Diff}(M ~\rel~ \partial M)$ of an $n$-dimensional manifold $M$ with $n\geq 5$ is the surgery exact sequence
\begin{equation}\label{relativesmoothsugeryexact}
    \cdots\to L_{n+1}(\pi_1(M))\xrightarrow{\omega_{\rel}^{Diff}} \mathcal{S}^{Diff}(M ~\rel ~\partial M)\xrightarrow{\eta_{\rel}^{Diff}} [M/{\partial M},G/O]\xrightarrow{\Theta_{\rel}^{Diff}} L_n(\pi_1(M)),
\end{equation}
 
Observe that there is a forgetful map $\mathcal{F}: \mathcal{C}(M)\to \mathcal{S}^{Diff}(M)$ such that the following diagram commutes:
\begin{center}
    \begin{tikzcd}
        {\mathcal{S}^{Diff}(M)}\arrow[r,"\eta^{Diff}"]&{[M,G/O]}\\
        {\mathcal{C}(M).}\arrow[u,"\mathcal{F}"]\arrow[ur,"\psi_*"']
    \end{tikzcd}
\end{center}

Recall some facts from \cite{IJ63, JBRS74, RS87, bel} related to smooth and tangential structure sets, which will be used later in this section.

Consider $E_1(\mathbb{C}P^q)$ to be the path-component of the space of all self maps of $\mathbb{C}P^q$ that include $Id_{\mathbb{C}P^q},$ and let $F_{\mathbb{S}^1}(\C^{q+1})$ represent the space of all $\mathbb{S}^1$-equivariant self maps of $\mathbb{S}^{2q+1}.$ Additionally, there is a stabilization map $s_{q+1}: F_{\mathbb{S}^1}(\C^{q+1})\to F_{\mathbb{S}^1}(\C^{q+2}),$ and we define $F_{\mathbb{S}^1}$ as the limit of these stabilizations.  

\begin{fact}\noindent\label{FS1}
\begin{itemize}
      
    \item[(a)] There is a group homomorphism $\alpha_{\mathbb{S}^k}: \pi_k(E_1(\mathbb{C}P^{q}))\to \mathcal{E}(\mathbb{C}P^q\times\mathbb{S}^k)$ for all $k,q \geq 1,$ where $\mathcal{E}(X)$ denotes the group of all homotopy classes of based self-homotopy equivalences of $X$ \cite[Page 15]{bel}.
    
    \item [(b)] For $q\geq 1$ and $k\geq 2,$ there is a group homomorphism $\Psi: \pi_k(E_1(\mathbb{C}P^q)) \to \mathcal{S}^{Diff}(\mathbb{C}P^q\times \mathbb{D}^k ~\rel ~\mathbb{C}P^q\times\mathbb{S}^{k-1}),$ and a canonical base-point preserving map $\Gamma: \mathcal{S}^{Diff}(\mathbb{C}P^q\times\mathbb{D}^k~\rel~\mathbb{C}P^q\times\mathbb{S}^{k-1}) \to \mathcal{S}^{Diff}(\mathbb{C}P^q\times\mathbb{S}^k)$ \cite[Page 24]{bel}. Moreover, the composition $\Gamma\circ\Psi$ sends the element $\alpha \in \pi_k(E_1(\mathbb{C}P^q))$ to the element $[(\mathbb{C}P^q\times\mathbb{S}^k,f)]$ in $\mathcal{S}^{Diff}(\mathbb{C}P^q\times\mathbb{S}^k),$ where $f$ represents a tangential self-homotopy equivalence of $\mathbb{C}P^q\times\mathbb{S}^k$ that comes from $\alpha$ \cite[Proposition 7.3]{bel}.

    \item[(c)] The homotopy groups $\pi_k(F_{\mathbb{S}^1}(\mathbb{C}^{q+1}))$ and $\pi_k(E_1(\mathbb{C}P^q))$ are isomorphic for all $q\geq 1$ and $k\geq 2$ (see \cite[Theorems 2.1-2.2]{IJ63} and \cite[Theorem 11.1]{JBRS74}). 
   
    \item[(d)] For all $q\geq 1,$ $\pi_k(F_{\mathbb{S}^1}(\mathbb{C}^{q+1}))$ is isomorphic to $\pi_k(F_{\mathbb{S}^1})$ if $1\leq k\leq 2q$ \cite[Proposition 6.1]{bel}.
    \item[(e)] $\pi_k(F_{\mathbb{S}^1})\cong \pi_{k}^s(\Sigma\mathbb{C}P^{\infty})\oplus\pi_{k-1}^s$ for $k\geq 1$ (see \cite[Theorem 6.6]{JBRS74} and \cite[Page 18]{bel}).
    \item[(f)] Let $\alpha \in \pi_{k-1}^s$ correspond to the element $f\in \pi_k(F_{\mathbb{S}^1}).$ Then the normal invariant of $f$ is the image of the composition $\mathbb{S}^k \wedge \mathbb{C}P_{+}^{\infty}\xrightarrow{\alpha \wedge Id_{\mathbb{C}P_{+}^{\infty}}} \mathbb{S}^1\wedge \mathbb{C}P_{+}^{\infty}\xrightarrow{t}\mathbb{S}^0$ under the canonical map $j_*:[\Sigma^k \mathbb{C}P_{+}^{\infty},G]\to [\Sigma^k \mathbb{C}P_{+}^{\infty},G/O]$, \cite[Page 193]{RS87} where $t: \mathbb{S}^1\wedge\mathbb{C}P_{+}^{\infty}\to \mathbb{S}^0$ is the Umkehr or tranfer map \cite{JBRS74}. Further, we note that $t\circ (\alpha\wedge Id_{\mathbb{C}P_{+}^{\infty}})$ is stably homotopic to $ \alpha \circ \Sigma^{k-1} t$ for $k\geq 2$. 
\end{itemize}
\end{fact}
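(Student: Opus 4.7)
The plan is to verify each item by extracting the statements from the cited literature \cite{IJ63, JBRS74, RS87, bel} and giving a uniform modern treatment. Since this is a compilation labelled \emph{Fact}, my goal is to state explicit constructions for the maps appearing in (a), (b), (c) and then reduce (d), (e), (f) to standard equivariant stable homotopy. I would organize the proof in three groups: the geometric constructions (a) and (b); the homotopical identifications and stabilization (c), (d), (e); and the naturality calculation (f).

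For (a), given a based map $\sigma\colon (\mathbb{S}^k,\ast)\to(E_1(\mathbb{C}P^q),\mathrm{id})$, I would define $\alpha_{\mathbb{S}^k}(\sigma)(x,s)=(\sigma(s)(x),s)$ on $\mathbb{C}P^q\times\mathbb{S}^k$. Since each $\sigma(s)$ lies in the identity component $E_1$, it is a self-homotopy equivalence, and a fiberwise-over-$\mathbb{S}^k$ argument shows the total map is a self-homotopy equivalence; well-definedness on $\pi_k$ and the group-homomorphism property follow by an Eckmann--Hilton argument. For (b), the same $\sigma$ yields a relative smooth structure on $\mathbb{C}P^q\times\mathbb{D}^k$: write $\mathbb{D}^k$ as a cone on $\mathbb{S}^{k-1}$, use the identity near $\mathbb{C}P^q\times\mathbb{S}^{k-1}$, and deform by $\sigma$ over the interior, producing $\Psi$. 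The map $\Gamma$ is the canonical gluing of such a relative structure to the standard disk to build an absolute structure on $\mathbb{C}P^q\times\mathbb{S}^k$. The identification $\Gamma\circ\Psi(\alpha)=[(\mathbb{C}P^q\times\mathbb{S}^k,f)]$ with $f$ tangential is then read off the construction, since $f$ is built out of $\sigma(s)\in E_1$ and the tangent bundle of $\mathbb{C}P^q\times\mathbb{S}^k$ is pulled back through $\sigma$.

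For (c), the principal circle bundle $\mathbb{S}^{2q+1}\to\mathbb{C}P^q$ induces a map $F_{\mathbb{S}^1}(\mathbb{C}^{q+1})\to E_1(\mathbb{C}P^q)$ by passage to the quotient, whose homotopy fiber consists of $\mathbb{S}^1$-equivariant lifts of the identity; an obstruction-theoretic argument shows this fiber is $1$-connected with $\pi_k=0$ for $k\geq 2$, yielding the isomorphism on $\pi_k$. For (d), the inclusion $F_{\mathbb{S}^1}(\mathbb{C}^{q+1})\hookrightarrow F_{\mathbb{S}^1}(\mathbb{C}^{q+2})$ has cofiber controlled by the equivariant cofiber of $\mathbb{S}^{2q+1}\hookrightarrow\mathbb{S}^{2q+3}$, giving $2q$-connectivity of the stabilization map; this is Proposition~6.1 of \cite{bel} and I would simply invoke it. For (e), after stabilization I would identify $F_{\mathbb{S}^1}$ with $\Omega^\infty\Sigma^\infty(\Sigma\mathbb{C}P^\infty_+)$ via the Becker--Gottlieb transfer / tom~Dieck splitting for circle-equivariant self-maps of free $\mathbb{S}^1$-spheres; the splitting $\Sigma\mathbb{C}P^\infty_+\simeq\Sigma\mathbb{C}P^\infty\vee\mathbb{S}^1$ then produces the claimed direct-sum decomposition on homotopy groups.

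For (f), using the above identification, an element $\alpha\in\pi_{k-1}^s$ maps to the stable class $\alpha\wedge\mathrm{id}_{\mathbb{C}P^\infty_+}\colon \mathbb{S}^k\wedge\mathbb{C}P^\infty_+\to\mathbb{S}^1\wedge\mathbb{C}P^\infty_+$; the normal invariant of the corresponding tangential self-equivalence is obtained by post-composing with the Umkehr map $t\colon\mathbb{S}^1\wedge\mathbb{C}P^\infty_+\to\mathbb{S}^0$ (this is the content of \cite{JBRS74}) and applying $j_*\colon[-,G]\to[-,G/O]$. The identity $t\circ(\alpha\wedge\mathrm{id})=\alpha\circ\Sigma^{k-1}t$ is pure naturality of smash with a stable map. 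The main obstacle is the identification in (e): making the tom~Dieck/transfer splitting work at the unstable level of $F_{\mathbb{S}^1}(\mathbb{C}^{q+1})$ requires careful choice of basepoints and attention to the $\mathbb{Z}$-summand coming from degree, but once the stable model is fixed, (f) reduces to a one-line naturality check.
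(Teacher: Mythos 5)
The paper does not actually prove Fact~\ref{FS1}; it is stated as background with citations to \cite{IJ63, JBRS74, RS87, bel}, so there is no in-paper argument to compare against. Your reconstruction of the constructions in (a), (b), (d), (e), (f) is sound and consistent with what those references establish, and your last observation in (f) is indeed just naturality of smash products in the stable category.

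There is, however, a concrete slip in your treatment of (c). The homotopy fiber of $F_{\mathbb{S}^1}(\mathbb{C}^{q+1}) \to E_1(\mathbb{C}P^q)$ over the identity is the gauge group of the principal circle bundle, namely $\mathrm{Map}^{\mathbb{S}^1}(\mathbb{S}^{2q+1},\mathbb{S}^1) \cong \mathrm{Map}(\mathbb{C}P^q,\mathbb{S}^1) \simeq \mathbb{S}^1$ (since $\mathbb{C}P^q$ is simply connected). This space is \emph{not} $1$-connected: it has $\pi_1 \cong \mathbb{Z}$ and $\pi_k = 0$ only for $k\geq 2$. Your claim that the fiber is ``$1$-connected with $\pi_k=0$ for $k\geq 2$'' would make it contractible, which would (incorrectly) promote (c) to an isomorphism on all $\pi_k$. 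With the correct fiber, the long exact sequence gives the isomorphism immediately for $k\geq 3$, but at $k=2$ you only get an injection $\pi_2(F_{\mathbb{S}^1}(\mathbb{C}^{q+1})) \hookrightarrow \pi_2(E_1(\mathbb{C}P^q))$ unless you separately argue that the connecting map $\pi_2(E_1(\mathbb{C}P^q)) \to \pi_1(\mathbb{S}^1) \cong \mathbb{Z}$ vanishes. That additional step is exactly what James's analysis in \cite{IJ63} supplies, and it needs to be invoked or proved rather than absorbed into a wrong connectivity claim.
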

\begin{note}\label{non_triviality_of_Umkehr_map}
We note from \cite[Page 33]{bel} that the restriction of the Umkehr map $t$ to $\Sigma \mathbb{C}P^1$ is a generator of $\pi_3^s\cong \mathbb{Z}/{24}$. Using this fact, it follows from the following diagram that
 %   \textcolor{blue}{Since the restriction of the Umkehr map $t$ to $\Sigma \mathbb{C}P^1$ is a generator of $\pi_3^s$ \cite[Page 33]{bel}, the restriction of  $\Sigma^{k} t : \Sigma^{k+1} \mathbb{C}P_{+}^{\infty}\to \mathbb{S}^k$ to $\Sigma^{k+1}\mathbb{C}P^1$ is also the generator of $\pi_3^s$. Hence, the suspension of the Umkher map $\Sigma^{k} t : \Sigma^{k+1} \mathbb{C}P_{+}^{\infty}\to \mathbb{S}^k$ is non-trivial, for any $k\geq 1$.}
\[\begin{tikzcd}
	t &&& {t\vert_{\Sigma \mathbb{C}P^1}} \\
	{\{\Sigma\mathbb{C}P^{\infty},\mathbb{S}^0\}} &&& {\{\mathbb{S}^3,\mathbb{S}^0\}\cong \pi_3^s} \\
	{\{\Sigma^4\mathbb{C}P^{\infty},\mathbb{S}^3\}} &&& {\{\mathbb{S}^6,\mathbb{S}^3\}\cong \pi_3^s} \\
	{\Sigma^3 t} &&& {\Sigma^3 t\vert_{\Sigma^4\mathbb{C}P^1}}
	\arrow[maps to, from=1-1, to=1-4]
	\arrow[bend right=60 , maps to, from=1-1, to=4-1]
	\arrow[bend left=60, maps to, from=1-4, to=4-4]
	\arrow[from=2-1, to=2-4]
	\arrow["{\Sigma^3}"', from=2-1, to=3-1]
	\arrow["{\Sigma^3}", "\cong"',from=2-4, to=3-4]
	\arrow[from=3-1, to=3-4]
	\arrow[maps to, from=4-1, to=4-4]
\end{tikzcd}\]
$\Sigma^{3} t : \Sigma^{4} \mathbb{C}P_{+}^{\infty}\to \mathbb{S}^3$ is non-trivial.
\end{note}

Let’s turn our attention to the product manifold $\mathbb{C}P^2\times\mathbb{S}^4$. By applying the previously mentioned facts in conjunction with surgery theory, we prove the following.

\begin{lemma}\label{epsilon}
  There exists a tangential self-homotopy equivalence $f_{2\nu}:\mathbb{C}P^2\times\mathbb{S}^4\to \mathbb{C}P^2\times\mathbb{S}^4,$ for which the normal invariant is non-zero. In particular, the normal invariant of $f_{2\nu}$ equals $f_{\mathbb{C}P^2\times\mathbb{S}^4}^*([\epsilon]),$ where $[\epsilon]$ represents the generator of $2$-torsion of $\pi_8(G/O)\subseteq [\mathbb{C}P^2\times\mathbb{S}^4,G/O]$.
\end{lemma}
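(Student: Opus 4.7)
The plan is to invoke Fact \ref{FS1} to manufacture $f_{2\nu}$ from an element of $\pi_3^s$, and then to compute its normal invariant directly using the explicit formula in Fact \ref{FS1}(f). First, take $q=2$ and $k=4$; the hypothesis $k\le 2q$ is satisfied. Combining Fact \ref{FS1}(c), (d), and (e) yields
\[ \pi_4(E_1(\mathbb{C}P^2)) \;\cong\; \pi_4(F_{\mathbb{S}^1}) \;\cong\; \pi_4^s(\Sigma \mathbb{C}P^\infty) \oplus \pi_3^s. \]
The $2$-primary element $2\nu \in \pi_3^s \cong \mathbb{Z}/24$ therefore determines a class $\alpha_{2\nu} \in \pi_4(E_1(\mathbb{C}P^2))$, and Fact \ref{FS1}(b) produces the tangential self homotopy equivalence $f_{2\nu} := \Gamma \circ \Psi(\alpha_{2\nu})$ of $\mathbb{C}P^2 \times \mathbb{S}^4$.

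By Fact \ref{FS1}(f), the normal invariant of $f_{2\nu}$ is the image, under $j_* : [\Sigma^4 \mathbb{C}P_+^\infty, G] \to [\Sigma^4 \mathbb{C}P_+^\infty, G/O]$, of the stable composite
\[ \Sigma^4 \mathbb{C}P_+^\infty \xrightarrow{\;2\nu \wedge \mathrm{Id}\;} \Sigma \mathbb{C}P_+^\infty \xrightarrow{\;t\;} \mathbb{S}^0, \]
pulled back along $\mathbb{C}P^2 \times \mathbb{S}^4 \to \Sigma^4 \mathbb{C}P_+^\infty$ coming from $\mathbb{C}P^2 \hookrightarrow \mathbb{C}P^\infty$. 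The Umkehr map $t$ annihilates the $\mathbb{S}^1$-summand of the stable splitting $\Sigma \mathbb{C}P_+^\infty \simeq \mathbb{S}^1 \vee \Sigma\mathbb{C}P^\infty$, so under the decomposition of $[\mathbb{C}P^2 \times \mathbb{S}^4, G/O]$ supplied by Corollary \ref{concorMtimesSk}, the normal invariant lies entirely in the summand $[\Sigma^4 \mathbb{C}P^2, G/O]$.

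The cofibre sequence $\mathbb{S}^7 \xrightarrow{\Sigma^3 \eta} \mathbb{S}^6 \hookrightarrow \Sigma^4 \mathbb{C}P^2 \to \mathbb{S}^8$ induces an exact sequence
\[ \pi_8(G/O) \longrightarrow [\Sigma^4 \mathbb{C}P^2, G/O] \longrightarrow \pi_6(G/O). \]
The restriction of the normal invariant to the bottom cell $\mathbb{S}^6$ is $j_*(2\nu \cdot \eta)$, and since $\nu \eta \in \pi_4^s = 0$, this restriction vanishes. Hence the normal invariant lifts to some $y \in \pi_8(G/O)$; equivalently, it has the form $f_{\mathbb{C}P^2 \times \mathbb{S}^4}^*(y)$. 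It remains to identify $y$ with $[\epsilon]$.

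The principal obstacle is this last identification. The class $y$ is represented by the stable composite $\mathbb{S}^8 \to \Sigma^4 \mathbb{C}P^2 \to \Sigma^4 \mathbb{C}P^\infty \xrightarrow{2\nu \wedge \mathrm{Id}} \Sigma \mathbb{C}P^\infty \xrightarrow{t} \mathbb{S}^0$, followed by $j$. Unwinding the attaching structure of $\Sigma^4\mathbb{C}P^2 = \mathbb{S}^6 \cup_{\Sigma^3\eta} \mathbb{D}^8$ together with the cell-wise description of $t|_{\Sigma\mathbb{C}P^2}$, this composite is realised as a Toda bracket of the form $\langle 2\nu,\eta,\cdot\rangle \subset \pi_8^s$. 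Comparing with the classical presentation $\epsilon \in \langle \nu,\eta,\nu\rangle$ in $\pi_8^s$ (and verifying that the relevant indeterminacy does not absorb the $\epsilon$-component) identifies $y$ with $\pm j_*(\epsilon)$. Since $\mathrm{Im}(J_8)$ is the $\mathbb{Z}/2$-summand of $\pi_8^s$ disjoint from $\epsilon$, the map $j_*$ sends $\epsilon$ nontrivially into the $2$-torsion of $\pi_8(G/O)$, yielding $y = [\epsilon]$ and completing the proof.
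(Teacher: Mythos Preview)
Your overall strategy matches the paper's: construct $f_{2\nu}$ from $2\nu\in\pi_3^s\subset\pi_4(F_{\mathbb{S}^1}(\mathbb{C}^3))$ via Fact~\ref{FS1}, then compute its normal invariant using the formula in Fact~\ref{FS1}(f) and identify the top-cell contribution as a Toda bracket. The reduction to the summand $[\Sigma^4\mathbb{C}P^2,G/O]$ is correct, although your reason (that $t$ annihilates the $\mathbb{S}^1$-summand) is not the paper's; the paper simply observes that the $\mathbb{S}^4$-component lies in $\pi_4^s=0$.

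The genuine gap is in the bracket computation. Your claim that the restriction to the bottom cell $\mathbb{S}^6=\Sigma^4\mathbb{C}P^1$ is ``$j_*(2\nu\cdot\eta)$'' is dimensionally inconsistent: this restriction lands in $\pi_6^s$, not $\pi_4^s$. Concretely, $(2\nu)\circ\Sigma^3 t$ restricted to $\mathbb{S}^6$ equals $(2\nu)\cdot\bigl(t|_{\Sigma\mathbb{C}P^1}\bigr)$, and the $S^1$-transfer restricted to $\Sigma\mathbb{C}P^1\simeq\mathbb{S}^3$ is (a unit times) $\nu\in\pi_3^s$, so the restriction is $2\nu^2=0\in\pi_6^s$. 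It is this $\nu$ that sits in the middle of the bracket, yielding $\langle\eta,\nu,2\nu\rangle$, not a bracket of the form $\langle 2\nu,\eta,\cdot\rangle$ as you write. The paper then invokes \cite[p.~189]{toda} to get $\langle\eta,\nu,2\nu\rangle=\{\bar\nu,\epsilon\}$; since $\mathrm{Im}(J_8)$ is generated by $\eta\sigma=\bar\nu+\epsilon$, both representatives map to the same nonzero class $[\epsilon]=[\bar\nu]$ in the $2$-torsion of $\pi_8(G/O)$. Your proposed comparison with ``$\epsilon\in\langle\nu,\eta,\nu\rangle$'' is therefore not the relevant bracket, and in any case you do not actually carry out that comparison or control its indeterminacy.
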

\begin{proof}
By Fact~\ref{FS1}{(d)} and {(e)}, we get $\pi_4(F_{\mathbb{S}^1}(\mathbb{C}^3))\cong \pi_4(F_{\mathbb{S}^1})\cong \mathbb{Z}/{8}\{\nu\}\oplus\mathbb{Z}/3 \{\alpha_1\}.$ Let $f_{2\nu}:\mathbb{C}P^2\times\mathbb{S}^4\to \mathbb{C}P^2\times\mathbb{S}^4$ be a self-homotopy equivalence that is induced by the element $2\nu \in \pi_{4}(F_{\mathbb{S}^1}(\mathbb{C}^3)).$ Now, it follows from Fact~\ref{FS1}{(f)} together with $\pi_4^s=0$ that the normal invariant $\eta^{Diff}(f_{2\nu})$ is the image of the composition map $(2\nu)\circ \Sigma^3 t:\Sigma^4\mathbb{C}P^2\to\mathbb{S}^0$ in $[\Sigma^4 \mathbb{C}P^2,G/O]\subset [\mathbb{C}P^2\times\mathbb{S}^4,G/O]$, where by Note \ref{non_triviality_of_Umkehr_map} the restriction of $\Sigma^3 t$ to $\Sigma^4 \mathbb{C}P^1$ is a generator of $\pi_3^s$. Since $2\nu^2=0$ \cite[Page 189]{toda}, it follows that $2\nu \circ \Sigma^3 t\vert_{\mathbb{S}^6}=0$. This implies that there exists a map $\mathcal{T}:\mathbb{S}^8\to \mathbb{S}^0$ such that the following diagram commutes: 
\[
\begin{tikzcd}
	{\mathbb{S}^7} & {\mathbb{S}^6} & {\Sigma^4\mathbb{C}P^2} & {\mathbb{S}^8} & {\mathbb{S}^7} \\
	&& {\mathbb{S}^3} \\
	&& {\mathbb{S}^0.}
	\arrow["\eta", from=1-1, to=1-2]
	\arrow["{\Sigma^4 i}", from=1-2, to=1-3]
	\arrow["\nu"', from=1-2, to=2-3]
	\arrow["{\Sigma^4 f_{\mathbb{C}P^2}}", from=1-3, to=1-4]
	\arrow["{\Sigma^3 t}", from=1-3, to=2-3]
	\arrow["\eta", from=1-4, to=1-5]
	\arrow["\mathcal{T}", dashed, from=1-4, to=3-3]
	\arrow["{2\nu}"', from=2-3, to=3-3]
\end{tikzcd}\]
Note that we may view the map $\mathcal{T}:\mathbb{S}^8\to \mathbb{S}^0$ as an element of the Toda bracket $<\eta, \nu, 2\nu>$. By \cite[Page-189]{toda}, $<\eta, \nu, 2 \nu>=\{\Bar{\nu}, \epsilon\}$ which projects non-trivially to the class $[\epsilon]$ in $\pi_8(G/O).$ This implies that $\eta^{Diff}(f_{2\nu})$ is the image of the class $[\epsilon]$ under the map $(\Sigma^4 f_{\mathbb{C}P^2})^*:\pi_8(G/O)\to [\Sigma^4\mathbb{C}P^2,G/O].$ Now the result follows from the following commutative diagram:

%Since $((2\nu)\circ \Sigma^3 t)\vert_{\Sigma^4\mathbb{C}P^1}: \Sigma^4 \mathbb{C}P^1\to \mathbb{S}^0$ is trivial, it suffices to compute the Toda bracket $<\eta,\nu,2\nu>$ to determine whether the composition $(2\nu)\circ \Sigma^3 t :\Sigma^4 \mathbb{C}P^2\to \mathbb{S}^0 $ is trivial or not. By \cite[Page-189]{toda}, $<\eta, \nu, 2 \nu>=\{\Bar{\nu}, \epsilon\}$ which projects non-trivially to the class $[\epsilon]$ in $\pi_8(G/O).$ This implies that $\eta^{Diff}(f_{2\nu})$ is the image of the class $[\epsilon]$ under the map $(\Sigma^4 f_{\mathbb{C}P^2})^*:\pi_8(G/O)\to [\Sigma^4\mathbb{C}P^2,G/O].$ Now the result follows from the following commutative diagram
\begin{center}
    \begin{tikzcd}
         &{[\Sigma^4 \mathbb{C}P^2,G/O]}\arrow[dr,"p^*"]\\
            {\pi_8(G/O)}\arrow[ur,"(\Sigma^4 f_{\mathbb{C}P^2})^*"]\arrow[rr,"(f_{\mathbb{C}P^2\times\s^4})^*"']&&{[\mathbb{C}P^2\times\mathbb{S}^4,G/O],}
    \end{tikzcd}
\end{center}
where the map $p^*:[\Sigma^4 \mathbb{C}P^2,G/O]\to [\mathbb{C}P^2\times\mathbb{S}^4,G/O]$ is injective.
\end{proof}

The next theorem identifies the smooth structure on $\mathbb{C}P^{2}\times \mathbb{S}^{4}$.
\begin{thm}\label{classificationCP2S4}
    Let $N$ be a closed, oriented, smooth manifold homeomorphic to $\mathbb{C}P^2\times\mathbb{S}^4$. Then $N$ is oriented diffeomorphic to $\mathbb{C}P^2\times\mathbb{S}^4.$ 
\end{thm}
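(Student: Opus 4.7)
The plan is to apply the surgery exact sequence together with the tangential homotopy equivalence of Lemma \ref{epsilon} to show that the unique non-trivial concordance class on $M := \mathbb{C}P^2 \times \mathbb{S}^4$ yields no new diffeomorphism type. First, I compute $\mathcal{C}(M)$ via Corollary \ref{concorMtimesSk}: $\mathcal{C}(M) \cong [\mathbb{C}P^2, Top/O] \oplus \pi_4(Top/O) \oplus [\Sigma^4 \mathbb{C}P^2, Top/O]$. Since $\pi_2(Top/O) = 0$ and $\pi_4(Top/O) = \Theta_4 = 0$, the cofibre sequence $\mathbb{S}^2 \hookrightarrow \mathbb{C}P^2 \to \mathbb{S}^4$ forces $[\mathbb{C}P^2, Top/O] = 0$; the middle summand vanishes for the same reason; and Proposition \ref{prop3.6}(3) identifies the last summand with $\Theta_8 \cong \mathbb{Z}/2$. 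Hence $\mathcal{C}(M) \cong \mathbb{Z}/2$, with non-trivial class $[(M \# \Sigma_8, h_{\Sigma_8})]$ where $\Sigma_8$ generates $\Theta_8$. Since concordance implies diffeomorphism, every $N$ homeomorphic to $M$ is oriented diffeomorphic to either $M$ or $M \# \Sigma_8$, so it suffices to show $\Sigma_8 \in I(M)$.

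To establish $\Sigma_8 \in I(M)$, I use the surgery exact sequence. Because $\pi_1(M) = 1$ and $L_9(\mathbb{Z}) = 0$, the normal invariant map $\eta^{Diff} : \mathcal{S}^{Diff}(M) \hookrightarrow [M, G/O]$ is injective, so two structure classes agree iff their normal invariants do. By Lemma \ref{epsilon}, the tangential homotopy equivalence $f_{2\nu} : M \to M$ defines a class $s_1 := [(M, f_{2\nu})] \in \mathcal{S}^{Diff}(M)$ with normal invariant $\eta^{Diff}(s_1) = f_M^*([\epsilon])$. On the other hand, the forgetful map $\mathcal{F} : \mathcal{C}(M) \to \mathcal{S}^{Diff}(M)$ sends the non-trivial concordance class to $s_2 := [(M \# \Sigma_8, h_{\Sigma_8})]$, and by commutativity of $\mathcal{F}$ with $\psi_* : [M, Top/O] \to [M, G/O]$, its normal invariant equals $f_M^*(\psi_*[\Sigma_8])$. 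It thus suffices to verify the identification $\psi_*([\Sigma_8]) = [\epsilon]$ in $\pi_8(G/O)$; this follows from the classical Kervaire-Milnor description of $\Theta_8$ as the subgroup of $\pi_8^s / \mathrm{Im}(J_8)$ generated by the class of $\epsilon$, together with injectivity of $\psi_* : \pi_8(Top/O) \to \pi_8(G/O)$ (a consequence of $\pi_9(G/Top) = 0$). Granting this, $s_1 = s_2$ in $\mathcal{S}^{Diff}(M)$, producing a diffeomorphism $M \# \Sigma_8 \to M$, so $\Sigma_8 \in I(M)$ and $N \cong M$.

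The main obstacle is the normal-invariant identification $\psi_*([\Sigma_8]) = [\epsilon]$ in $\pi_8(G/O)$: this requires tracking the Milnor generator of $\Theta_8$ through the comparison map $\psi : Top/O \to G/O$ and matching it with the class of $\epsilon$ in the eighth stable stem, using the $J$-homomorphism exact sequence and the classical computation of $\mathrm{coker}(J_8)$. Once this identification is in hand, injectivity of $\eta^{Diff}$ and the surgery-theoretic machinery yield the diffeomorphism $M \# \Sigma_8 \cong M$ cleanly.
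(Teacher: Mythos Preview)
Your argument is correct and follows essentially the same route as the paper: both compute $\mathcal{C}(M)\cong\Theta_8\cong\Z/2$, both invoke Lemma~\ref{epsilon} to produce the tangential self-equivalence $f_{2\nu}$ with normal invariant $f_M^*([\epsilon])$, and both exploit $L_9(e)=0$ to make $\eta^{Diff}$ injective on $\mathcal{S}^{Diff}(M)$. The only organisational difference is that the paper composes an arbitrary homeomorphism $g$ with $f_{2\nu}$ and uses the composition formula for normal invariants to cancel, whereas you first reduce to the inertia-group statement $\Sigma_8\in I(M)$ and then equate the two structure classes $[(M,f_{2\nu})]$ and $[(M\#\Sigma_8,h_{\Sigma_8})]$ directly; these are equivalent manoeuvres.

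Two small remarks. First, you write $L_9(\Z)=0$; since $\pi_1(M)=1$ the relevant group is $L_9(e)=0$ (by Shaneson's splitting $L_9(\Z)\cong L_9(e)\oplus L_8(e)\cong\Z$, so the statement as written is false, though your intended conclusion is fine). Second, the identification $\psi_*([\Sigma_8])=[\epsilon]$ that you flag as the main obstacle is handled in the paper's diagram more cheaply than you suggest: since $\psi_*:\Theta_8\to\pi_8(G/O)$ is injective (from $\pi_9(G/Top)=0$) and $\Theta_8\cong\Z/2$, its image is forced to be the unique $\Z/2$ torsion subgroup of $\pi_8(G/O)$, which is by definition the subgroup generated by $[\epsilon]$ in Lemma~\ref{epsilon}. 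No finer tracking through $\mathrm{coker}(J_8)$ is needed.
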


  \begin{proof}
  Suppose we have a homeomorphism $g: N\to \mathbb{C}P^2\times\mathbb{S}^4$. This yields an element $[(N,g)] \in \mathcal{C}(\mathbb{C}P^2\times\s^4).$ 
    Consider the following commutative diagram
    \begin{center}
        \begin{tikzcd}
             {0}\arrow[d] &&{0}\arrow[d]\\
            {\Theta_8}\arrow[rr,"(f_{\mathbb{C}P^2\times\s^4})^*","\cong"']\arrow[d,"\psi_*"']&&{\mathcal{C}(\mathbb{C}P^2\times\mathbb{S}^4)\cong [\Sigma^4\mathbb{C}P^2,Top/O]}\arrow[d,"\psi_*"]\\
            {\pi_8(G/O)}\arrow[dr,"(\Sigma^4 f_{\mathbb{C}P^2})^*"']\arrow[rr,"(f_{\mathbb{C}P^2\times\s^4})^*"'] &&{[\mathbb{C}P^2\times\s^4,G/O]}  \\
            &{[\Sigma^4 \mathbb{C}P^2,G/O],}\arrow[ur,"p^*"']
            \end{tikzcd}
    \end{center}
    where the induced degree one map $f^{*}_{\mathbb{C}P^2\times\s^4}:  \Theta_8\to \mathcal{C}(\mathbb{C}P^2\times\mathbb{S}^4)$ is an isomorphism by Corollary \ref{concorMtimesSk} and Proposition \ref{prop3.6}{(3)}, and the map $(\Sigma^4 f_{\mathbb{C}P^2})^*:\pi_8(G/O)\to [\Sigma^4\mathbb{C}P^2,G/O]$ is injective as $\pi_7(G/O)=0$. From this diagram, it’s clear that the normal invariant $\eta^{Diff}(g)$ is either trivial or equal to $(f_{\mathbb{C}P^2\times\mathbb{S}^4})^*([\epsilon]).$ 
    If $\eta^{Diff}(g)$ equals $(f_{\mathbb{C}P^2\times\mathbb{S}^4})^*([\epsilon])$ then by applying Lemma \ref{epsilon} and the composition formula for normal invariants, we find that  
    \begin{align*}
        \eta^{Diff}(f_{2\nu}\circ g)= & \eta^{Diff}(f_{2\nu})+ (f_{2\nu}^{-1})^* \eta^{Diff}(g)\\
         = & (f_{\mathbb{C}P^2\times\mathbb{S}^4})^*([\epsilon])\pm (f_{\mathbb{C}P^2\times\mathbb{S}^4})^*([\epsilon])=0.
    \end{align*}
    This implies that $(N,f_{2\nu}\circ g)$ and $(\mathbb{C}P^2\times\s^4,Id)$ represent the same element in $\mathcal{S}^{Diff}(\mathbb{C}P^2\times\s^4)$ from the surgery exact sequence \eqref{relativesmoothsugeryexact}. This concludes the proof of the theorem.
  \end{proof}
	
%%%%%%%%%%%%%%%%%%%%%%%%%%%%%%%%%%%%%%%%%%%%%%%%%%%%%%%%%%%%%%%%%%%%%%%%%%%%%%%%%%%%%%%%%%%%%%%%%%%%%%%%%%%%%%%%%%%%%%%%%%%%%%%%%%%%%%%%%%%%%%%%%%%%%%%%%%%%%%%%%%%%%%%%%%%%%%%%%%%%%%%%%%
Next, let’s focus on the normal invariants associated with the smooth structures of $\mathbb{C}P^2\times\mathbb{S}^5$. We initiate our discussion by proving the following:

\begin{lemma}\label{psicp2s5}
    The map $\psi_*:[\mathbb{C}P^2\times\mathbb{S}^5,Top/O]\to [\mathbb{C}P^2\times\mathbb{S}^5,G/O]$ is trivial.
\end{lemma}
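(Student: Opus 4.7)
The plan is to exploit the naturality of the splitting from Corollary \ref{concorMtimesSk} in the H-space target. Since $\psi: Top/O \to G/O$ is a map of infinite loop spaces, both the source and target of $\psi_*$ split compatibly as
\begin{align*}
[\mathbb{C}P^2\times\mathbb{S}^5,Top/O] &\cong [\mathbb{C}P^2,Top/O]\oplus\pi_5(Top/O)\oplus[\Sigma^5\mathbb{C}P^2,Top/O],\\
[\mathbb{C}P^2\times\mathbb{S}^5,G/O] &\cong [\mathbb{C}P^2,G/O]\oplus\pi_5(G/O)\oplus[\Sigma^5\mathbb{C}P^2,G/O],
\end{align*}
and $\psi_*$ is the direct sum of the three maps induced on matching summands. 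Thus it suffices to verify that each of these three summand maps is trivial.

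I would first dispose of the two low-dimensional summands on the source. Applying the long exact sequence for the cofibration $\mathbb{S}^2\hookrightarrow\mathbb{C}P^2\to\mathbb{S}^4$ along $Top/O$, and using $\pi_n(Top/O)=\Theta_n=0$ for $2\leq n\leq 5$, one finds $[\mathbb{C}P^2,Top/O]=0$. Likewise $\pi_5(Top/O)=\Theta_5=0$. Hence the only potentially nonzero source summand is $[\Sigma^5\mathbb{C}P^2,Top/O]$, which by Proposition \ref{prop3.6}(4) equals $\mathbb{Z}/2\oplus\mathbb{Z}/56$.

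The final step is to note that the corresponding target summand $[\Sigma^5\mathbb{C}P^2,G/O]$ itself vanishes; this is already extracted inside the proof of Proposition \ref{prop3.6}(4), where the exact sequence
\[
\pi_8(G/O)\xrightarrow{\eta^*}\pi_9(G/O)\to[\Sigma^5\mathbb{C}P^2,G/O]\to\pi_7(G/O)
\]
combined with $\pi_7(G/O)=0$ and the surjectivity of $\eta^*$ forces the middle term to be zero. Consequently, $\psi_*$ on the unique nontrivial source summand lands in a zero group, so the whole homomorphism is trivial. There is no genuine obstacle here; the proof is a bookkeeping assembly of the vanishing calculations already performed earlier in Section \ref{conlow}.
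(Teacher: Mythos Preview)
Your argument is correct and follows essentially the same route as the paper: both use the natural splitting of $[\mathbb{C}P^2\times\mathbb{S}^5,-]$ into three summands along $\psi$, then kill two summands on the source side via $[\mathbb{C}P^2,Top/O]=0$ and $\pi_5(Top/O)=0$, and the remaining one on the target side via $[\Sigma^5\mathbb{C}P^2,G/O]=0$ (extracted from the proof of Proposition~\ref{prop3.6}(4)). Your write-up is slightly more explicit about the middle summand, but the strategy is identical.
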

\begin{proof}
    Since the short exact sequence \eqref{note2.3} splits along both $Top/O$ and $G/O,$ there is a commutative diagram where each of the three components of $[\mathbb{C}P^2\times\mathbb{S}^5,Top/O]$ is mapped into the corresponding component of $[\mathbb{C}P^2\times\mathbb{S}^5,G/O]$ under the induced homomorphism by the canonical map $\psi:Top/O \to G/O.$ From this diagram, by  using the facts that both $[\Sigma^5\mathbb{C}P^2,G/O]$ and $[\mathbb{C}P^2,Top/O]$ are zero, we find that the map $\psi_*:[\mathbb{C}P^2\times\mathbb{S}^5,Top/O]\to [\mathbb{C}P^2\times\mathbb{S}^5,G/O]$ is trivial, as desired.
\end{proof}	

\begin{thm}\label{classificationCP2S5}
    If a closed smooth oriented manifold $N$ is homeomorphic to $\mathbb{C}P^2\times\mathbb{S}^5,$ then it is oriented diffeomorphic to $\mathbb{C}P^2\times\mathbb{S}^5.$
\end{thm}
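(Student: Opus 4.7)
The plan is to parallel the argument of Theorem \ref{classificationCP2S4}, with Lemma \ref{psicp2s5} trivializing the normal invariant step and a new obstruction from $L_{10}(1) \cong \mathbb{Z}/2$ that requires a separate geometric argument.

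Given a homeomorphism $g : N \to M := \mathbb{C}P^2 \times \mathbb{S}^5$, I would first form $[(N,g)] \in \mathcal{C}(M)$ and its image $\mathcal{F}[(N,g)] \in \mathcal{S}^{Diff}(M)$ under the forgetful map. The commutative triangle $\eta^{Diff} \circ \mathcal{F} = \psi_*$ combined with Lemma \ref{psicp2s5} yields
\[ \eta^{Diff}(\mathcal{F}[(N,g)]) = \psi_*([(N,g)]) = 0. \]
Plugging into the surgery exact sequence for the simply connected $9$-manifold $M$,
\[ L_{10}(1) \xrightarrow{\omega} \mathcal{S}^{Diff}(M) \xrightarrow{\eta^{Diff}} [M, G/O] \xrightarrow{\Theta} L_9(1) = 0, \]
this places $\mathcal{F}[(N,g)]$ in the image of the Wall realization $\omega$. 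Since $L_{10}(1) \cong \mathbb{Z}/2$ corresponds to the Arf invariant, this image has at most two elements: $[(M, Id)]$ and $[(M \# \Sigma_K, h_{\Sigma_K})]$, where $\Sigma_K \in bP_{10} \subseteq \Theta_9$ is the Kervaire $9$-sphere. Thus $N$ is orientation-preservingly diffeomorphic to either $\mathbb{C}P^2 \times \mathbb{S}^5$ or $(\mathbb{C}P^2 \times \mathbb{S}^5) \# \Sigma_K$.

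The main obstacle—the essential new step beyond the $k = 4$ argument—is to show that $\Sigma_K$ lies in the inertia group $I(\mathbb{C}P^2 \times \mathbb{S}^5)$, so that the second possibility also yields a manifold diffeomorphic to $M$. I would approach this geometrically via the decomposition
\[ \mathbb{C}P^2 \times \mathbb{S}^5 = (\mathbb{C}P^2 \times \mathbb{D}^5) \cup_{\mathbb{C}P^2 \times \mathbb{S}^4} (\mathbb{C}P^2 \times \mathbb{D}^5), \]
realizing $M \# \Sigma_K$ as a twisted gluing along $\mathbb{C}P^2 \times \mathbb{S}^4$ via the standard Gromoll--Kervaire twist that produces $\Sigma_K$ from a clutching construction, and then showing this twist extends to a self-diffeomorphism of one of the $\mathbb{C}P^2 \times \mathbb{D}^5$ pieces by exploiting the $\mathbb{C}P^2$ factor to absorb the obstruction. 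Once this inertia-group containment is established, $N$ is orientation-preservingly diffeomorphic to $\mathbb{C}P^2 \times \mathbb{S}^5$ in both cases, completing the classification.
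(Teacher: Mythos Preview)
Your reduction step is exactly the paper's: apply Lemma \ref{psicp2s5} to get $\eta^{Diff}(g)=0$, feed this into the surgery exact sequence for the simply connected $9$-manifold, and conclude that $[(N,g)]$ lies in the image of $\omega:L_{10}(e)=bP_{10}=\Z/2 \to \mathcal{S}^{Diff}(\mathbb{C}P^2\times\mathbb{S}^5)$, so that $N$ is diffeomorphic to either $M$ or $M\#\Sigma_K$.

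The divergence is in how you dispose of the $\Sigma_K$ ambiguity. The paper does \emph{not} argue that $\Sigma_K\in I(M)$; instead it shows directly that $\omega$ is the zero map by proving its predecessor $\Theta^{Diff}:[\Sigma(\mathbb{C}P^2\times\mathbb{S}^5),G/O]\to L_{10}(e)$ is onto. Concretely, one pushes the class $\nu^2\in\pi_6(G/O)$ into $[\mathbb{C}P^2\times\mathbb{S}^5\times\mathbb{S}^1,G/O]$ via the projection $\Sigma(\mathbb{C}P^2\times\mathbb{S}^5)\to\mathbb{S}^6$ and the split inclusion, and then evaluates the Rourke--Sullivan Kervaire obstruction formula
\[
\Theta^{Diff}(f)=\big\langle V(\mathbb{C}P^2\times\mathbb{S}^5\times\mathbb{S}^1)^2\,f^*(\mathcal{K}_6),\,[\mathbb{C}P^2\times\mathbb{S}^5\times\mathbb{S}^1]\big\rangle
= \big\langle x^2\,f^*(\mathcal{K}_6),\,[\mathbb{C}P^2\times\mathbb{S}^5\times\mathbb{S}^1]\big\rangle \neq 0,
\]
using $V^2=1+x^2$ and $(\nu^2)^*(\mathcal{K}_6)\neq 0$. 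This is a short, self-contained characteristic-class computation and yields the slightly stronger conclusion that $[(N,g)]=[(M,Id)]$ in $\mathcal{S}^{Diff}$.

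Your alternative---showing $\Sigma_K\in I(\mathbb{C}P^2\times\mathbb{S}^5)$ geometrically---would also suffice for the stated theorem, but as written it has a real gap. There is no ``standard Gromoll--Kervaire twist that produces $\Sigma_K$ from a clutching construction'' in the sense you invoke: $\Sigma_K^9\in bP_{10}$ arises from the Kervaire plumbing, and knowing a representing diffeomorphism of $\mathbb{D}^8$ rel boundary says nothing a priori about extension over $\mathbb{C}P^2\times\mathbb{D}^5$. The phrase ``exploiting the $\mathbb{C}P^2$ factor to absorb the obstruction'' is exactly the content of the theorem you are trying to prove, and you have given no mechanism for it (e.g.\ a specific isotopy, a handlebody argument, or a surgery-theoretic identification). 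Absent such a mechanism, this step is an assertion, not an argument. If you want to pursue this route, one honest way is to translate ``$\Sigma_K\in I_h(M)$'' back into the surgery sequence---where it is again equivalent to surjectivity of $\Theta^{Diff}$ onto $L_{10}(e)$---at which point you are doing the paper's computation anyway.
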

\begin{proof}
Let $g:N\to \mathbb{C}P^2\times \mathbb{S}^5$ be a homeomorphism. We show that the pairs $(N,g)$ and $(\mathbb{C}P^2\times\mathbb{S}^5,Id)$ represent the same element in $\mathcal{S}^{Diff}(\mathbb{C}P^2\times\mathbb{S}^5).$  By applying Lemma \ref{psicp2s5}, we find that the normal invariant $\eta^{Diff}(g)=0.$ Now it follows from the smooth surgery exact sequence (\ref{relativesmoothsugeryexact}) for $\mathbb{C}P^2\times\mathbb{S}^5$ that there exists $\Sigma \in bP_{10}=\mathbb{Z}/2$ such that $(N,g)$ and $( \mathbb{C}P^2\times\mathbb{S}^5\# \Sigma, h)$ are equivalent in $\mathcal{S}^{Diff}(\mathbb{C}P^2\times\mathbb{S}^5),$ where $h: (\mathbb{C}P^2\times\mathbb{S}^5)\# \Sigma\to \mathbb{C}P^2\times\mathbb{S}^5$ is the canonical homeomorphism. We claim that $\Sigma$ is (oriented) diffeomorphic to the standard $9$-sphere, which is equivalent to prove from the surgery exact sequence that $\Theta^{Diff}:[\Sigma(\mathbb{C}P^2\times\mathbb{S}^5),G/O]\to L_{10}(e)$ is non-zero.

We note from \cite[Lemma 3.8]{DC10} that $\Theta^{Diff}: [\Sigma (\mathbb{C}P^2\times\mathbb{S}^5), G/O]\to L_{10}(e)$ fits into the following commutative diagram:
\[
\begin{tikzcd}
	{\mathbb{Z}/2\{\nu^2\}\cong\pi_6(G/O)} & {[\Sigma (\mathbb{C}P^2\times\mathbb{S}^5),G/O]} & {[\mathbb{C}P^2\times\mathbb{S}^5\times\mathbb{S}^1,G/O]} \\
	& {L_{10}(e)} & {L_{10}(\mathbb{Z})} \\
	& {\mathcal{S}^{Diff}(\mathbb{C}P^2\times\mathbb{S}^5),}
	\arrow["{Pr^*}", from=1-1, to=1-2]
	\arrow["{T_{\mathbb{C}P^2\times\mathbb{S}^5}}", hook, from=1-2, to=1-3]
	\arrow["{\Theta^{Diff}}"', from=1-2, to=2-2]
	\arrow["{\Theta^{Diff}}", from=1-3, to=2-3]
	\arrow["\cong"', from=2-2, to=2-3]
	\arrow["{\omega^{Diff}}"', from=2-2, to=3-2]
\end{tikzcd}\]
% Consider the following commutative diagram
 %   \begin{center}
  %      \begin{tikzcd}
   %         {\mathbb{Z}/2\{\nu^2\}\cong\pi_6(G/O)}\arrow[r,"(Pr)^*"]&{[\Sigma (\mathbb{C}P^2\times\mathbb{S}^5),G/O]}\arrow[r,"p^*"]\arrow[d,"\Theta^{Diff}"']&{[\mathbb{C}P^2\times\mathbb{S}^5\times\mathbb{S}^1,G/O]}\arrow[d,"\Theta^{Diff}"]\\
    %        &{L_{10}(e)}\arrow[d,"\omega^{diff}"']\arrow[r,"\cong"']&{L_{10}(\mathbb{Z})}\\
     %       &{\mathcal{S}^{Diff}(\mathbb{C}P^2\times\mathbb{S}^5)}
      %  \end{tikzcd}
    %\end{center} $p^*:[\Sigma (\mathbb{C}P^2\times\mathbb{S}^5),G/O]\to [\mathbb{C}P^2\times\mathbb{S}^5\times\mathbb{S}^1,G/O]$ is injective by (\ref{note2.3})
 where the map $T_{\mathbb{C}P^2\times\mathbb{S}^5}:[\Sigma(\mathbb{C}P^2\times\mathbb{S}^5), G/O]\to [\mathbb{C}P^2\times\mathbb{S}^5\times\mathbb{S}^1, G/O]$ is the inclusion onto second factor considering the identification $\mathcal{L}:[\mathbb{C}P^2\times\mathbb{S}^5, G/O]\oplus [\Sigma(\mathbb{C}P^2\times\mathbb{S}^5), G/O]\cong [\mathbb{C}P^2\times\mathbb{S}^5\times\mathbb{S}^1, G/O]$, the induced map $(Pr)^*:\pi_6(G/O)\to [\Sigma(\mathbb{C}P^2\times\mathbb{S}^5),G/O]$ from the projection map $Pr:\Sigma(\mathbb{C}P^2\times\mathbb{S}^5)\simeq \Sigma\mathbb{C}P^2\vee \Sigma \mathbb{S}^5\vee \Sigma^6 \mathbb{C}P^2\to \Sigma \mathbb{S}^5 $ is injective, and $L_{10}(e)\hookrightarrow L_{10}(\mathbb{Z})$ is the isomorphism given in \cite{JS69}. Moreover, for any $y\in [\Sigma(\mathbb{C}P^2\times\mathbb{S}^5), G/O]$, the following diagram commutes:
 \[
 \begin{tikzcd}
	{\mathbb{C}P^2\times\mathbb{S}^5\times\mathbb{S}^1} && {\Sigma(\mathbb{C}P^2\times\mathbb{S}^5)} \\
	& {G/O.}
	\arrow["p", from=1-1, to=1-3]
	\arrow["{\mathcal{L}(0,y)}"', from=1-1, to=2-2]
	\arrow["y", from=1-3, to=2-2]
\end{tikzcd}\]
   % Using the above commutative diagram, to show the non-triviality of $\Theta^{Diff}:[\Sigma(\mathbb{C}P^2\times\mathbb{S}^5), G/O]\to L_{10}(e),$ it is enough to prove that the composition $\Theta^{Diff}\circ p^*\circ (Pr)^*: \pi_6(G/O)\to L_{10}(\mathbb{Z})$ is non-trivial. Let $f:\mathbb{C}P^2\times\mathbb{S}^5\times\mathbb{S}^1\to G/O$ be the image of $\nu^2 \in \pi_6(G/O)$ under the induced map $p^*\circ Pr^*:\pi_6(G/O)\to [\mathbb{C}P^2\times\mathbb{S}^5\times\mathbb{S}^1,G/O].$  
%Then, the surgery obstruction of $f$ is given by \cite{Wall}
  Now, from \cite{Wall}, the surgery obstruction of $f\in [\mathbb{C}P^2\times\mathbb{S}^5\times\mathbb{S}^1, G/O]$ is given by
    \begin{align}\label{kervaireobstruction}
    \Theta^{Diff}(f)= &  \langle V(\mathbb{C}P^2\times\mathbb{S}^5\times\mathbb{S}^1)^2  f^*(\bigoplus\limits_{j} \mathcal{K}_{2^{j+1}-2}) , \; [\mathbb{C}P^2\times\mathbb{S}^5\times\mathbb{S}^1] \rangle \nonumber\\
         = &  \langle (1+x^2) f^*(\bigoplus\limits_{j}\mathcal{K}_{2^{j+1}-2}) , \; [\mathbb{C}P^2\times\mathbb{S}^5\times\mathbb{S}^1] \rangle\nonumber \\
          =& \langle x^2 f^*(\mathcal{K}_{6}) , \; [\mathbb{C}P^2\times\mathbb{S}^5\times\mathbb{S}^1] \rangle, 
         \end{align}  
   where $(V(\mathbb{C}P^2\times\s^5\times\s^1))^2= 1+x^2$ is the square of total Wu class of the manifold $\mathbb{C}P^2\times\s^5\times\s^1,x$ is the generator of $H^2(\mathbb{C}P^2;\mathbb{Z}/2)$ and $\bigoplus\limits_{j}\mathcal{K}_{2^{j+1}-2} \in H^{*}(G/O;\mathbb{Z}/2)$ is the smooth Kervaire class. 
   
   We choose $f: \mathbb{C}P^2\times\mathbb{S}^5\times\mathbb{S}^1\to G/O$ such that $$f=\mathcal{L}\circ T_{\mathbb{C}P^2\times\mathbb{S}^5}(Pr^*(\nu^2))=\mathcal{L}(0,Pr^*(\nu^2))=\mathcal{L}(0,\nu^2\circ Pr).$$ 
  Then $f^*= p^*\circ Pr^*(\nu^2)$. Since \( (\nu^2)^*(\mathcal{K}_6) \neq 0 \), and both \( Pr^* \) and \( p^* \) are injective, it follows that $f^*(\mathcal{K}_6) = p^* \circ Pr^* \circ (\nu^2)^*(\mathcal{K}_6) \neq 0.$ Now it follows from (\ref{kervaireobstruction}) that $\Theta^{Diff}(f)\neq 0$ in $L_{10}(e).$ Therefore $\Theta^{Diff}:[\Sigma(\mathbb{C}P^2\times\mathbb{S}^5), G/O]\to L_{10}(e)$ is non-trivial. This completes the proof.
\end{proof}	

We now proceed to give the classification of smooth structures of $\mathbb{C}P^2\times \mathbb{S}^6$. We recall that if $M$ is a closed smooth $n$-manifold and $k\geq 1,$ it is well known that the \textit{relative tangential structure set} \cite{IMRLBW80} $$\mathcal{S}_{k}^{t}(M):= \mathcal{S}^t(M\times\mathbb{D}^k~ \text{rel} ~M\times\s^{k-1})$$
has a group structure \cite{TP70, MR70, WBTP71, bel}. 
Moreover, when $n+k\geq 5$, a relative tangential surgery exact sequence of groups exists \cite[Page-195]{RS87}, which can be expressed as
\begin{equation}\label{relativetangentialsurgery}
   \cdots\to L_{n+k+1}(\pi_1(M))\xrightarrow{\omega_{\rel}^t} \mathcal{S}_{k}^t(M) \xrightarrow{\eta_{\rel}^t}[\Sigma^k M_{+},G]\xrightarrow{\Theta_{\rel}^t} L_{n+k}(\pi_1(M)),
\end{equation}
where $\eta_{\rel}^t: \mathcal{S}_k^t(M)\to [\Sigma^k M_{+},G]$ and $\Theta_{\rel}^t: [\Sigma^k M_{+},G]\to L_{n+k}(\pi_1(M))$ are group homomorphisms.\\
For any closed manifold $M\times \mathbb{D}^k$ of dimension $n+k \geq 5$, the relative topological and PL surgery obstruction maps  
\[
\Theta_{\mathrm{rel}}^{Top} : [ \Sigma^k M_+, G/Top] \to L_{n}(\pi_1(M)), 
\text{ and }~
\Theta_{\mathrm{rel}}^{PL} : [ \Sigma^k M_+, G/PL] \to L_{n+k}(\pi_1(M)),
\]  
of $M\times \mathbb{D}^k$ fit into the commutative diagram
%\begin{equation}\label{PL_obs and Top_obs}
\[\begin{tikzcd}
	{[\Sigma^k M_+, G/PL]} && {L_{n+k}(\pi_1(M))} \\
	{[\Sigma^k M_+, G/Top]} && {L_{n+k}(\pi_1(M)),}
	\arrow["{\Theta_{\rel}^{PL}}", from=1-1, to=1-3]
	\arrow["{\tilde{i}_*}"', from=1-1, to=2-1]
	\arrow["{=}", from=1-3, to=2-3]
	\arrow["{\Theta_{\rel}^{Top}}"', from=2-1, to=2-3]
\end{tikzcd}\]%\end{equation}
as given in \cite[p.~289]{kirby} (see also \cite[Page 16]{Surgery_survey}), where $\tilde{i} \colon G/PL \to G/Top$ is the canonical map.

 Using this diagram, it follows from \cite[Page 1639]{DCTM} that, for $k\geq 4$, the commutative diagram
 \begin{equation}\label{commutative_disgram_1}
 \begin{tikzcd}
	{[\Sigma^k M_{+}, G/PL]} && {L_{n+k}(\pi_1(M))} \\
	{[\Sigma (M\times\mathbb{S}^{k-1})_{+},G/PL]} && {L_{n+k}(\pi_1(M))}
	\arrow["{\Theta^{PL}}", from=1-1, to=1-3]
	\arrow["{(\Sigma c)^*}"', from=1-1, to=2-1]
	\arrow["{=}", from=1-3, to=2-3]
	\arrow["{\bar{\Theta}^{PL}}"', from=2-1, to=2-3]
\end{tikzcd}\end{equation} 
also holds, where $c: M\times \mathbb{S}^{k-1}\to \Sigma^{k-1} M_{+}$ is the collapse map.

Furthermore, by \cite[Lemma 3.8]{DC10}, the map $\bar{\Theta}^{PL}: [\Sigma(M\times \mathbb{S}^{k-1})_+, G/PL]$ fits into the following commutative diagram:
\begin{equation}\label{commutative_diagram_2}
\begin{tikzcd}
	{[\Sigma(M\times \mathbb{S}^{k-1})_+,G/PL]} && {L_{n+k}(\pi_1(M))} \\
	{[M\times \mathbb{S}^{k-1}\times \mathbb{S}^1, G/PL]} && {L_{n+k}(\pi_1(M\times \mathbb{S}^1)),}
	\arrow["{\Bar{\Theta}^{PL}}", from=1-1, to=1-3]
	\arrow["{\tilde{T}_{M\times \mathbb{S}^{k-1}}}"', from=1-1, to=2-1]
	\arrow[hook', from=1-3, to=2-3]
	\arrow["{\tilde{\Theta}^{PL}}"', from=2-1, to=2-3]
\end{tikzcd}\end{equation}
where $\tilde{T}_{M\times \mathbb{S}^{k-1}}: [\Sigma(M\times \mathbb{S}^{k-1})_+, G/PL]\to [M\times \mathbb{S}^{k-1}\times \mathbb{S}^1, G/PL] $ is the inclusion map arising from the indentification $[M\times \mathbb{S}^{k-1}\times \mathbb{S}^1,G/PL]\cong [M\times \mathbb{S}^{k-1}, G/PL]\oplus [\Sigma(M\times \mathbb{S}^{k-1}), G/PL]$.

%%%%%%%%%%%%%%%%%%%%%%%%%%%%%%%%%%%%%%%%%%%%%%%%%%%%%%%%%%%%%%%%%%%%%%%%%%%%%%%%%%%%%%%%%%%%%%%%%%%%%%%%%%%%%%%%%%%
%Recall that if $M$ is a closed smooth $n$-manifold and $k\geq 1,$ it is well known that the \textit{relative structure set} $$\mathcal{S}_{k}^{Diff}(M):= \mathcal{S}^{Diff}(M\times\mathbb{D}^k~ \text{rel} ~M\times\s^{k-1})$$
%has a group structure \cite{TP70, MR70, WBTP71, bel}. Moreover, when $n+k\geq 5$, a relative surgery exact sequence of groups exists \cite[Page-195]{RS87}, which can be expressed as
%\begin{equation}\label{relativetangentialsurgery}
 %  \cdots\to L_{n+k+1}(\pi_1(M))\xrightarrow{\omega_{\rel}} \mathcal{S}_{k}^{Diff}(M) \xrightarrow{\eta_{\rel}}[\Sigma^k M_{+},G/O]\xrightarrow{\Theta_{\rel}} L_{n+k}(\pi_1(M)),
%\end{equation}
%where $\eta_{\rel}: \mathcal{S}_k^{Diff}(M)\to [\Sigma^k M_{+},G/O]$ and $\Theta_{\rel}: [\Sigma^k M_{+},G/O]\to L_{n+k}(\pi_1(M))$ are group homomorphisms.\\
%%%%%%%%%%%%%%%%%%%%%%%%%%%%%%%%%%%%%%%%%%%%%%%%%%%%%%%%%%%%%%%%%%%%%%%%%%%%%%
%Let’s also revisit some notations. The group of homotopy classes of based self-homotopy equivalences is denoted by $\mathcal{E}(M\times N)$.
We now revisit some notations. Let $\mathcal{E}_{M}(M\times N)$ denote the subgroup of $\mathcal{E}(M\times N)$ containing $f \in\mathcal{E}(M\times N)$ that fix $M$. In other words, these maps take the form $f=(p_M, p_N\circ f),$ where $p_M$ and $p_N$ are the projection maps from $M\times N$ to $M$ and $N$ respectively. Similarly, we can define the subgroup $\mathcal{E}_N(M\times N)$. We are now going to examine the group of homotopy classes of self-homotopy equivalences of the product manifold $\mathbb{C}P^2\times\mathbb{S}^6$ and establish the following.

\begin{thm}\label{homoequiimplydiffeo}
Let $f\in \mathcal{E}(\mathbb{C}P^2\times\mathbb{S}^6)$. Then $f$ is homotopic to a diffeomorphism.
\end{thm}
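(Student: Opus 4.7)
The plan is to show that for every $f\in\mathcal{E}(\mathbb{C}P^2\times\mathbb{S}^6)$ the class $[(\mathbb{C}P^2\times\mathbb{S}^6,f)]\in\mathcal{S}^{Diff}(\mathbb{C}P^2\times\mathbb{S}^6)$ equals $[(\mathbb{C}P^2\times\mathbb{S}^6,Id)]$; by definition of $\mathcal{S}^{Diff}$ this is equivalent to the conclusion that $f$ is homotopic to a diffeomorphism. Since $\mathbb{C}P^2\times\mathbb{S}^6$ is simply connected of dimension $10$ and $L_{11}(e)=0$, the simply-connected surgery exact sequence \eqref{relativesmoothsugeryexact} gives an injection $\eta^{Diff}:\mathcal{S}^{Diff}(\mathbb{C}P^2\times\mathbb{S}^6)\hookrightarrow[\mathbb{C}P^2\times\mathbb{S}^6,G/O]$, so the task reduces to proving $\eta^{Diff}(f)=0$ in $[\mathbb{C}P^2\times\mathbb{S}^6,G/O]$.

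Using the splitting of \eqref{note2.3}, I would write
\[[\mathbb{C}P^2\times\mathbb{S}^6,G/O]\cong [\mathbb{C}P^2,G/O]\oplus\pi_6(G/O)\oplus[\Sigma^6\mathbb{C}P^2,G/O]\]
and handle the summands one at a time. The first two are governed by $f|_{\mathbb{C}P^2\times\{*\}}$ and $f|_{\{*\}\times\mathbb{S}^6}$, which after projection are self-homotopy-equivalences of $\mathbb{C}P^2$ and $\mathbb{S}^6$, both of which are generated by genuine diffeomorphisms (identity or complex conjugation on $\mathbb{C}P^2$, identity or antipodal map on $\mathbb{S}^6$). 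Composing $f$ with the product diffeomorphism matching these two restrictions lets me assume $f|_{\mathbb{C}P^2\vee\mathbb{S}^6}$ is homotopic to the identity, so that the first two components of $\eta^{Diff}(f)$ vanish.

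The remaining component lives in $[\Sigma^6\mathbb{C}P^2,G/O]$, and after the previous reduction $f$ is tangential. By Fact \ref{FS1}{(b),(c)} it therefore corresponds to an element of $\pi_6(F_{\mathbb{S}^1}(\mathbb{C}^3))$, and Fact \ref{FS1}{(f)} explicitly describes the normal-invariant contribution from the $\pi_{k-1}^s$ summand of $\pi_k(F_{\mathbb{S}^1})$ as a composition with the Umkehr map $t:\mathbb{S}^1\wedge\mathbb{C}P_+^\infty\to\mathbb{S}^0$. For $k=6$ Fact \ref{FS1}{(e)} gives $\pi_6(F_{\mathbb{S}^1})\cong\pi_6^s(\Sigma\mathbb{C}P^\infty)\oplus\pi_5^s=\pi_6^s(\Sigma\mathbb{C}P^\infty)$ (using $\pi_5^s=0$), and since the stability range $k\leq 2q=4$ of Fact \ref{FS1}{(d)} fails here, the discrepancy between $\pi_6(F_{\mathbb{S}^1}(\mathbb{C}^3))$ and $\pi_6(F_{\mathbb{S}^1})$ must be controlled by an obstruction argument using the skeleta of $\mathbb{C}P^\infty/\mathbb{C}P^2$. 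Tracking each generator into $[\Sigma^6\mathbb{C}P^2,G/O]$ via Toda-bracket identities in the stable stems (in the spirit of the $\langle\eta,\nu,2\nu\rangle\ni\epsilon$ computation of Lemma \ref{epsilon}) should then show each image is trivial.

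The main obstacle is this last stable-homotopy computation: identifying $\pi_6(F_{\mathbb{S}^1}(\mathbb{C}^3))$ unstably, computing $[\Sigma^6\mathbb{C}P^2,G/O]$ via the cofibre sequence $\mathbb{S}^9\xrightarrow{\eta}\mathbb{S}^8\hookrightarrow\Sigma^6\mathbb{C}P^2$, and verifying that the induced map between them is zero. Once this is done, all three summands of $\eta^{Diff}(f)$ vanish, hence $\eta^{Diff}(f)=0$, and the injectivity from the surgery sequence forces $[(\mathbb{C}P^2\times\mathbb{S}^6,f)]=[(\mathbb{C}P^2\times\mathbb{S}^6,Id)]$, completing the proof.
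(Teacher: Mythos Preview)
Your overall architecture matches the paper's: reduce to showing $\eta^{Diff}(f)=0$, peel off the $\mathcal{E}(\mathbb{C}P^2)$ and $\mathcal{E}(\mathbb{S}^6)$ contributions by composing with diffeomorphisms, and then handle the residual piece coming from $\pi_6(E_1(\mathbb{C}P^2))\cong\pi_6(F_{\mathbb{S}^1}(\mathbb{C}^3))$. Two genuine gaps remain.

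\textbf{First gap.} After arranging $f|_{\mathbb{C}P^2\vee\mathbb{S}^6}\simeq Id$ you assert that $f$ ``therefore corresponds to an element of $\pi_6(F_{\mathbb{S}^1}(\mathbb{C}^3))$''. Fact \ref{FS1}(b,c) only supplies a map \emph{from} $\pi_6(F_{\mathbb{S}^1}(\mathbb{C}^3))$ into $\mathcal{E}(\mathbb{C}P^2\times\mathbb{S}^6)$; it says nothing about surjectivity onto the self-equivalences restricting to the identity on the wedge. The paper closes this gap via Pave\v{s}i\'{c}'s product decomposition $\mathcal{E}(\mathbb{C}P^2\times\mathbb{S}^6)=\mathcal{E}_{\mathbb{C}P^2}\cdot\mathcal{E}_{\mathbb{S}^6}$ together with the computation $[\mathbb{C}P^2,E_1(\mathbb{S}^6)]\cong[\mathbb{C}P^2,SG_7]=0$, which forces everything not already a diffeomorphism to lie in $\pi_6(E_1(\mathbb{C}P^2))$. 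You need an analogous structural input; the normalization $f|_{\mathbb{C}P^2\vee\mathbb{S}^6}\simeq Id$ alone does not give it.

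\textbf{Second gap.} You correctly identify the crux as showing that $\pi_6(F_{\mathbb{S}^1}(\mathbb{C}^3))\to[\Sigma^6\mathbb{C}P^2,G/O]$ is zero, but your proposed route---computing $\pi_6(F_{\mathbb{S}^1}(\mathbb{C}^3))$ unstably and chasing generators through Toda brackets in the style of Lemma \ref{epsilon}---is left open, and Fact \ref{FS1}(f) is of no help here since the $\pi_{5}^s$ summand it governs vanishes. The paper's Lemma \ref{FS1diffeo} sidesteps all of this with a torsion argument: one shows (using the relative surgery sequence and the nontriviality of $\Theta_{\rel}|_{\pi_6(G/O)}$) that the image of $\eta_{\rel}\circ\Psi$ lands in the torsion of $[\Sigma^6\mathbb{C}P^2,G/O]$, which equals $\mathbb{Z}/3$; on the other hand the composite factors through the stable group $\pi_6(F_{\mathbb{S}^1})\cong\mathbb{Z}/2$ (Mukai). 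A homomorphism whose source has order $2$ and whose target has order $3$ is zero. This avoids any unstable identification of $\pi_6(F_{\mathbb{S}^1}(\mathbb{C}^3))$ and any Toda-bracket bookkeeping.
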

To prove Theorem~\ref{homoequiimplydiffeo}, we require the following lemma.
\begin{lemma}\label{FS1diffeo}
    Every self-homotopy equivalence of $\mathbb{C}P^2\times\mathbb{S}^6$ induced by an element of $\pi_6(F_{\mathbb{S}^1}(\mathbb{C}^3))$ is homotopic to a diffeomorphism.
\end{lemma}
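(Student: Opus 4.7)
The plan is to apply the smooth surgery exact sequence to the simply connected manifold $\mathbb{C}P^2\times\mathbb{S}^6$:
\[
L_{11}(e) \to \mathcal{S}^{Diff}(\mathbb{C}P^2\times\mathbb{S}^6) \xrightarrow{\eta^{Diff}} [\mathbb{C}P^2\times\mathbb{S}^6, G/O] \to L_{10}(e).
\]
Since $L_{11}(e)=0$, the normal invariant map $\eta^{Diff}$ is injective. Hence it suffices to prove that for every $\alpha'\in \pi_6(E_1(\mathbb{C}P^2))\cong \pi_6(F_{\mathbb{S}^1}(\mathbb{C}^3))$ (Fact \ref{FS1}(c)), the induced self homotopy equivalence $f=\alpha_{\mathbb{S}^6}(\alpha')$ satisfies $\eta^{Diff}([(\mathbb{C}P^2\times\mathbb{S}^6,f)])=0$, so that $[(\mathbb{C}P^2\times\mathbb{S}^6,f)]$ coincides with the basepoint $[(\mathbb{C}P^2\times\mathbb{S}^6,\mathrm{Id})]$, which exhibits $f$ as homotopic to a diffeomorphism.

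By Fact \ref{FS1}(b) we have $[(\mathbb{C}P^2\times\mathbb{S}^6,f)]=\Gamma(\Psi(\alpha'))$, so by naturality the normal invariant factors through the relative normal invariant $\eta_{\rel}(\Psi(\alpha'))\in [\Sigma^6 \mathbb{C}P^2_+, G/O]$. Under the splitting (\ref{note2.3}) applied to $G/O$, the target $[\mathbb{C}P^2\times\mathbb{S}^6, G/O]$ decomposes as $[\mathbb{C}P^2, G/O]\oplus \pi_6(G/O)\oplus [\Sigma^6\mathbb{C}P^2, G/O]$, and the image of $\eta^{Diff}\circ\Gamma$ sits in the latter two summands. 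Next I would invoke Fact \ref{FS1}(e)--(f) and the splitting $\pi_6(F_{\mathbb{S}^1})\cong \pi_6^s(\Sigma\mathbb{C}P^\infty)\oplus \pi_5^s$; since $\pi_5^s=0$, that summand contributes nothing, so the problem reduces to classes lifted from $\pi_6^s(\Sigma\mathbb{C}P^\infty)$. For such a class, I would combine the Umkehr map $t\colon \mathbb{S}^1\wedge \mathbb{C}P_+^\infty\to \mathbb{S}^0$ with restriction along $\mathbb{C}P^2\hookrightarrow \mathbb{C}P^\infty$ and use the cofibre sequence (\ref{eqncp^2}) for $X=G/O$ to show that $j_*$ of the composite dies in $[\Sigma^6\mathbb{C}P^2, G/O]$.

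The main obstacle is twofold. First, Fact \ref{FS1}(d) identifies $\pi_6(F_{\mathbb{S}^1}(\mathbb{C}^3))$ with $\pi_6(F_{\mathbb{S}^1})$ only in the stable range $k\leq 2q=4$, whereas here $k=6$; one has to account for the kernel and cokernel of this stabilization via the fibration comparing $F_{\mathbb{S}^1}(\mathbb{C}^{q+1})$ for varying $q$, and check that the additional unstable classes still yield trivial normal invariants. Second, the required vanishing in $[\Sigma^6\mathbb{C}P^2, G/O]$ rests on a Toda-bracket style stable calculation — in spirit analogous to the $\langle\eta,\nu,2\nu\rangle$ computation used in Lemma \ref{epsilon}, but in higher degree — combined with explicit knowledge of the image of $J$ and of $\pi_5^s(\mathbb{C}P^2)$; this is the computational heart of the argument and the step most likely to require careful bookkeeping to ensure all contributions are killed after passing through $j_*$.
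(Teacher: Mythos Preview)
Your overall strategy---show that the normal invariant vanishes and use $L_{11}(e)=0$ to conclude---is correct, and your reduction to the relative normal invariant $\eta_{\rel}\circ\Psi$ landing in $[\Sigma^6\mathbb{C}P^2_+,G/O]$ is exactly how the paper begins. But the two obstacles you flag are real, and your outline does not resolve them. In particular, Fact~\ref{FS1}(f) only computes the normal invariant for the $\pi_{k-1}^s$ summand of $\pi_k(F_{\mathbb{S}^1})$; since $\pi_5^s=0$ that part is vacuous, and you are left with no formula at all for classes coming from $\pi_6^s(\Sigma\mathbb{C}P^\infty)$, let alone for the genuinely unstable part of $\pi_6(F_{\mathbb{S}^1}(\mathbb{C}^3))$.

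The paper avoids both obstacles with a torsion-prime mismatch. First, using that $\pi_6(F_{\mathbb{S}^1}(\mathbb{C}^3))$ is finite and that the surgery obstruction $\Theta_{\rel}$ kills the $\pi_6(G/O)$ summand, it shows that $\eta_{\rel}\circ\Psi$ lands in the torsion subgroup of $[\Sigma^6\mathbb{C}P^2,G/O]$, which is $\mathbb{Z}/3$. Second, rather than computing anything unstable, it invokes a naturality square from Schultz \cite{RS76}: the composite $\eta_{\rel}\circ\Psi$ for $\mathbb{C}P^2$ factors as the stabilization $s_3\colon \pi_6(F_{\mathbb{S}^1}(\mathbb{C}^3))\to \pi_6(F_{\mathbb{S}^1})$, followed by $\eta_{\rel}\circ\Psi$ for $\mathbb{C}P^3$, followed by restriction along $\mathbb{C}P^2_+\hookrightarrow\mathbb{C}P^3_+$. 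Since $\pi_6(F_{\mathbb{S}^1})\cong\mathbb{Z}/2$ by Mukai \cite{JM82}, the image is $2$-torsion sitting inside $\mathbb{Z}/3$, hence zero. The point is that the stabilization map enters only as a \emph{factorization}, not as an isomorphism, so the failure of Fact~\ref{FS1}(d) in degree $6$ is irrelevant; and no Toda bracket computation is needed.
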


%%%%%%%%%%%%%%%%%%%%%%%%%%%%%%%%%%%%%%%%%%%%%%%%%%%%%%%%%%%%%%%%%%%%%%%%%%%%%%%%%%%%%%%%%%%%%%%%%%%%%%%%%%%%%%%%%%%%%%%%%%%%%%%%%%%%%%%%%%%%%%%%%%%%%%%%%%%%%%%%
\begin{proof}
Let \( f \) be a self-homotopy equivalence of \( \mathbb{C}P^2 \times \mathbb{S}^6 \), induced by an element \( \alpha \in \pi_6(F_{\mathbb{S}^1}(\mathbb{C}^3)) \). Now, to prove this lemma, we need to show that the map \textcolor{blue}{$\alpha$} is mapped trivially under the group homomorphism $\Psi: \pi_6(F_{\mathbb{S}^1}(\mathbb{C}^3))\to \mathcal{S}_6^{Diff}(\mathbb{C}P^2)$. In view of \eqref{relativesmoothsugeryexact}, this is equivalent to showing that the relative normal invariant \( \eta_{\rel}^{Diff}(f) \) is trivial.  
%In view of \eqref{relativesmoothsugeryexact}, the proof of the lemma reduces to showing that the relative normal invariant \( \eta_{\rel}^{Diff}(f) \) is trivial.

Since $f$ is tangential by Fact~\ref{FS1}(b), we first compute the relative tangential normal invariant $\eta_{\rel}^t(f)$ of $f$. Note from \cite[Page 195]{RS87} that the map \( \eta^t_{\rel} : \mathcal{S}_6^t(\mathbb{C}P^2) \to [\Sigma^6\mathbb{C}P^2, G] \) fits into the following commutative diagram:
    \begin{equation}\label{diagram_tangential}
        \begin{tikzcd}
            {0}\arrow[r]&{[\Sigma^6\mathbb{C}P_{+}^2,PL]}\arrow[r]\arrow[d,"H"']&{[\Sigma^6 \mathbb{C}P_{+}^2,G]}\arrow[r,"\beta"]\arrow[d,-,double equal sign distance,double]&{[\Sigma^6\mathbb{C}P_{+}^2,G/PL]}\arrow[d,"\Theta^{PL}"]\\
          {0}  \arrow[r]&{\mathcal{S}_6^{t} (\mathbb{C}P^2)}\arrow[r,"\eta_{\rel}^t"']& {[\Sigma^6 \mathbb{C}P_{+}^2,G]}\arrow[r,"\Theta_{\rel}^t"']&{L_{10}(e),}
        \end{tikzcd}
    \end{equation} where the first row is the exact sequence induced from the fiber sequence $\cdots\to\Omega (G/PL)\to PL\to G\to G/PL$ and the second row is the exact sequence \eqref{relativetangentialsurgery} with $M=\mathbb{C}P^2$ and $k=6$. Observe that the restricted map \( \beta|_{\pi_6^s} : \pi_6^s \to [\mathbb{S}^6, G/PL] \subset [\Sigma^6 \mathbb{C}P^2_{+}, G/PL] \) is an isomorphism. Next, we show that the restriction $\Theta^{PL}\vert_{\pi_6(G/PL)}: \pi_6(G/PL)\to L_{10}(e)$ is an isomorphism.

   Combining the diagrams \eqref{commutative_disgram_1} and \eqref{commutative_diagram_2}, we see that $\Theta^{PL}: [\Sigma^6\mathbb{C}P^2_+, G/PL]\to L_{10}(e)$ fits into the following commutative diagram
    
    \[\begin{tikzcd}
	{\pi_6(G/PL)} & {[\Sigma^6 \mathbb{C}P^2_+, G/PL]} && {L_{10}(e)} \\
	& {[\mathbb{C}P^2\times\mathbb{S}^5\times\mathbb{S}^1,G/PL]} && {L_{10}(\mathbb{Z}),}
	\arrow["{Pr^*}", from=1-1, to=1-2]
	\arrow["{\Theta^{PL}}", from=1-2, to=1-4]
	\arrow["{T_{\mathbb{C}P^2\times \mathbb{S}^5}\circ (\Sigma c)^*}"', hook, from=1-2, to=2-2]
	\arrow["\cong", from=1-4, to=2-4]
	\arrow["{\tilde{\Theta}^{PL}}"', from=2-2, to=2-4]
\end{tikzcd}\]
  where the projection map $Pr: \Sigma^6 \mathbb{C}P^2_{+}\simeq \Sigma^6 \mathbb{C}P^2\vee \mathbb{S}^6\to \mathbb{S}^6$ induced along $G/PL$ is injective and the isomorphism $L_{10}(e)\hookrightarrow L_{10}(\mathbb{Z})$ follows from \cite{JS69}. Let $g\in [\mathbb{C}P^2\times \mathbb{S}^5\times \mathbb{S}^1, G/PL]$ such that $g= T_{\mathbb{C}P^2\times \mathbb{S}^5}\circ (\Sigma c)^*\circ Pr^*(y)$, where $y$ is the generator of $\pi_6(G/PL)$. Now it follows from \cite{Wall} that 
 \begin{align*}
    \Theta^{Diff}(g)= &  \langle V(\mathbb{C}P^2\times\mathbb{S}^5\times\mathbb{S}^1)^2  g^*(\bigoplus\limits_{j= 0}^{\infty} k_{4j+2}) , \; [\mathbb{C}P^2\times\mathbb{S}^5\times\mathbb{S}^1] \rangle \nonumber\\
         = &  \langle (1+x^2) g^*(\bigoplus\limits_{j=0}^{\infty} k_{4j+2}) , \; [\mathbb{C}P^2\times\mathbb{S}^5\times\mathbb{S}^1] \rangle\nonumber \\
          =& \langle x^2 g^*(k_{6}) , \; [\mathbb{C}P^2\times\mathbb{S}^5\times\mathbb{S}^1] \rangle, 
         \end{align*}where $k_{4j+2}$ is the generator of $H^{4j+2}(G/PL;\mathbb{Z}/2)$. Since $(\Sigma c)^*\circ (Pr)^*\circ (y)^*(k_6)\neq 0$ and $T_{\mathbb{C}P^2\times \mathbb{S}^5}$ is injective, $g^*(k_6)\neq 0$. Therefore $\Theta^{PL}\vert_{\pi_6(G/PL)}\to L_{10}(e)$ is non-trivial.
% Since the sugery obstruction map $\sigma^{PL}: \pi_6(G/PL)\to L_6(e)$ for $\mathbb{S}^6$ is an isomorphism \cite[Page 421]{Luck_surgery}, we obtain from \cite[Proposition 3.1]{DC10} and \cite[Page 288]{AR92} that the restriction \( \Theta^{PL}|_{\pi_6(G/PL)} : \pi_6(G/PL) \to L_{10}(e) \) is also an isomorphism. 
    
    Then, it follows from the commutative diagram \eqref{diagram_tangential} that the map $\Theta_{\rel}^t \vert_{\pi_6(G)}: \pi_6(G)\to L_{10}(e)$ is non-zero. Therefore, $\eta_{\rel}^t(f)$ is contained in the component $[\Sigma^6\mathbb{C}P^2, G]\cong \mathbb{Z}/3$. We now consider the relative version of the commutative diagram given in \cite[Diagram (2.4)]{IMRLBW80}, as shown below. 
    \[\begin{tikzcd}
	{\mathcal{S}_6^t(\mathbb{C}P^2)} && {[\Sigma^6\mathbb{C}P^2_{+}, G]} \\
	{\mathcal{S}_6^{Diff}(\mathbb{C}P^2)} && {[\Sigma^6\mathbb{C}P^2_{+}, G/O].}
	\arrow["{\eta_{\rel}^t}", from=1-1, to=1-3]
	\arrow[from=1-1, to=2-1]
	\arrow["{j_*}", from=1-3, to=2-3]
	\arrow["{\eta_{\rel}^{Diff}}"', from=2-1, to=2-3]
\end{tikzcd}\]
Since $[\Sigma^6\mathbb{C}P^2, O]$ and $\pi_6(O)$ are both trivial, the map $j_*:[\Sigma^6\mathbb{C}P^2_{+},G]\to [\Sigma^6\mathbb{C}P^2_{+}, G/O]$ is injective. Hence, the above commutative diagram implies that $\eta_{\rel}^{Diff}(f)$ is also contained in the torsion part of $[\Sigma^6\mathbb{C}P^2, G/O]\cong \mathbb{Z}\oplus\mathbb{Z}/3$. Using the fact that $\pi_6(F_{\mathbb{S}^1}(\mathbb{C}^3))$ is finite group \cite[Page-29]{bel}, we deduce that the image of $\alpha$ under the composition $\eta_{\rel}^{Diff}\circ \Psi :\pi_6(F_{\mathbb{S}^1}(\mathbb{C}^3))\to [\Sigma^6\mathbb{C}P_{+}^2,G/O]$ is contained in the $\mathbb{Z}/3$-summand of $[\Sigma^6\mathbb{C}P^2,G/O]$. 

We now consider the following commutative diagram (see \cite[Proposition 2.7]{RS76} \cite[Diagram (3.3)]{RS83}): 
\begin{center}
     \begin{tikzcd}
           {\pi_6(F_{\s^1}(\mathbb{C}^3))}\arrow[rr,"\eta_{\rel}^{Diff}\circ\Psi"]\arrow[d,"s_3"']&&{[\Sigma^6 \mathbb{C}P_{+}^2,G/O]}\\
             {\pi_6(F_{\s^1}(\mathbb{C}^4))\cong \pi_6(F_{\s^1})}\arrow[rr,"\eta_{\rel}^{Diff}\circ\Psi"']&&{[\Sigma^6 \mathbb{C}P_{+}^3,G/O],}\arrow[u,"(\Sigma^6 i' \vee id)^*"']\\
     \end{tikzcd}
     \end{center}
 %   Since each element of $\pi_6(F_{\mathbb{S}^1}(\mathbb{C}^3))$ gives rise to tangential self-homotopy equivalence of $\mathbb{C}P^2\times\mathbb{S}^6$, there exists a group homomorphism $\Psi^t: \pi_6(F_{\mathbb{S}^1}(\mathbb{C}^3))\to \mathcal{S}_k^t(\mathbb{C}P^2)$ such that it fits into the following commutative diagram:
% \[\begin{tikzcd}
%	& {\mathcal{S}_k^t(\mathbb{C}P^2)} \\
%	{\pi_6(F_{\mathbb{S}^1}(\mathbb{C}^3))} &&& {[\Sigma^6\mathbb{C}P^2_{+}, G]} \\
%	\\
%	{\pi_6(F_{\mathbb{S}^1}(\mathbb{C}^3))\cong \pi_6(F_{\mathbb{S}^1})\cong \mathbb{Z}/2} &&& {[\Sigma^6\mathbb{C}P^3_{+}, G]} \\
%	& {\mathcal{S}_k^t(\mathbb{C}P^3)}
%	\arrow["{\eta_{\rel}^t}", from=1-2, to=2-4]
%	\arrow["{\Psi^t}", from=2-1, to=1-2]
%	\arrow["{\eta_{\rel}^t\circ\Psi^t}", from=2-1, to=2-4]
%	\arrow["{s_3}"', from=2-1, to=4-1]
%	\arrow["{\eta_{\rel}^t\circ\Psi^t}"', from=4-1, to=4-4]
%	\arrow["{\Psi^t}"', from=4-1, to=5-2]
%	\arrow["{(\Sigma^6 i'\vee id)^*}"', from=4-4, to=2-4]
%	\arrow["{\eta_{\rel}^t}"', from=5-2, to=4-4]
%\end{tikzcd}\]
where $s_3:\pi_6(F_{\s^1}(\mathbb{C}^3))\to \pi_6(F_{\s^1})$ is the stabilization map and the right vertical arrow is induced from the inculsion $\Sigma^6 i' :\Sigma^6\mathbb{C}P^2\hookrightarrow \Sigma^6\mathbb{C}P^3$ along $G/O$. Since $\pi_6(F_{\mathbb{S}^1})\cong \mathbb{Z}/2$ \cite{JM82}, the above diagram implies that $\eta_{\rel}^{Diff}\circ \Psi (\alpha)=\eta_{\rel}^{Diff}(f)$ is lies in $\pi_6(G/O)$ summand of $[\Sigma^6\mathbb{C}P^2_{+},G/O]$.   

Combining the above two observations, we conclude that $\eta_{\rel}^{Diff}(f)$ is trivial. This completes the proof.

    \end{proof}

 \begin{proof}[Proof. of Theorem \ref{homoequiimplydiffeo}]
     We begin by observing that for any self-homotopy equivalence $g:\mathbb{C}P^2\times\mathbb{S}^6\to \mathbb{C}P^2\times\mathbb{S}^6,$ the composition map $p_X\circ g\circ i_{X}: X\to X$ is a homotopy equivalence for $X\in \{\mathbb{C}P^2,\mathbb{S}^6\}$, where $i_X: X \hookrightarrow \mathbb{C}P^2\times\mathbb{S}^6$ is the inclusion. Now applying \cite[Theorem 2.5]{PP99}, there is a decomposition
      \begin{equation}\label{selfproduct}
          \mathcal{E}(\mathbb{C}P^2\times\mathbb{S}^6)= \mathcal{E}_{\mathbb{C}P^2}(\mathbb{C}P^2\times\mathbb{S}^6)\cdot \mathcal{E}_{\mathbb{S}^6}(\mathbb{C}P^2\times\mathbb{S}^6).
      \end{equation}
       Further, from \cite[Proposition 2.3]{PP99} $\mathcal{E}_{\mathbb{C}P^2}(\mathbb{C}P^2\times\mathbb{S}^6)$ and $\mathcal{E}_{\mathbb{S}^6}(\mathbb{C}P^2\times\mathbb{S}^6)$ fit into the following two split short exact sequences 
      $$0\to [\mathbb{C}P^2, E_1(\mathbb{S}^6)]\to \mathcal{E}_{\mathbb{C}P^2}(\mathbb{C}P^2\times\mathbb{S}^6)\to \mathcal{E}(\s^6)\to 0$$
      and 
      $$0\to \pi_6(E_1(\mathbb{C}P^2)) \to \mathcal{E}_{\mathbb{S}^6}(\mathbb{C}P^2\times\mathbb{S}^6) \to \mathcal{E}(\mathcal{C}P^2)\to 0,$$ where $[\mathbb{C}P^2,E_1(\mathbb{S}^6)]\cong [\mathbb{C}P^2,SG_7]\cong 0$. We also observe that any self-homotopy equivalence of $\mathbb{C}P^2\times\mathbb{S}^6$ induced from the factor $\mathcal{E}(\mathbb{S}^6)$ or $\mathcal{E}(\mathbb{C}P^2)$ is represented by a diffeomorphism. Furthermore, Lemma \ref{FS1diffeo} assures the same for the self-homotopy equivalences corresponding to the elements of $\pi_6(E_1(\mathbb{C}P^2))\cong \pi_6(F_{\mathbb{S}^1}(\mathbb{C}^3)).$ By combining these in the decomposition (\ref{selfproduct}) and using the composition formula for normal invariants, we deduce that the normal invariant of any self-homotopy equivalence of $\mathbb{C}P^2\times\mathbb{S}^6$ is zero. This concludes the proof of Theorem \ref{homoequiimplydiffeo}. 
\end{proof}
Recall that the homotopy inertia group $I_h(M)$ of a manifold $M,$ is defined as the collection of all $\Sigma \in I(M)$ for which a diffeomorphism $M\#\Sigma \to M$ exists that is homotopic to the standard homeomorphism $h_{\Sigma}: M\#\Sigma \to M.$ Clearly, $I_c(M) \subseteq I_h(M).$ The next lemma confirms that they are indeed equal for the product manifold $\mathbb{C}P^q\times\mathbb{S}^{2k}.$

 \begin{lemma}\label{homotopyinertiaequalconcorinertia}
     For every $q,k \geq 1$, we have $I_h(\mathbb{C}P^q\times\mathbb{S}^{2k})=I_c(\mathbb{C}P^q\times\mathbb{S}^{2k})$.
 \end{lemma}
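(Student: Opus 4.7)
The containment $I_c(\mathbb{C}P^q\times\mathbb{S}^{2k})\subseteq I_h(\mathbb{C}P^q\times\mathbb{S}^{2k})$ is immediate from the definitions, so the substance of the lemma is the reverse inclusion. My plan is to derive it from the Kirby–Siebenmann identification $\mathcal{C}(M)\cong [M,Top/O]$ together with the naturality of this bijection under self-homeomorphisms of $M$, the key point being that a self-homeomorphism homotopic to $Id_M$ acts trivially on the set of concordance classes of smoothings.

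Write $M=\mathbb{C}P^q\times\mathbb{S}^{2k}$ and fix $\Sigma\in I_h(M)$. By hypothesis there is a diffeomorphism $\phi:M\#\Sigma\to M$ homotopic to the canonical homeomorphism $h_\Sigma$; set $g:=\phi\circ h_\Sigma^{-1}:M\to M$, so that $g\simeq Id_M$ and $\phi=g\circ h_\Sigma$. Because $\phi$ is itself a diffeomorphism, the pair $(M\#\Sigma,\phi)$ is concordant to $(M,Id_M)$ via the concordance data $(\phi,\phi\times Id_{[0,1]})$. It therefore suffices to exhibit a concordance between $(M\#\Sigma,h_\Sigma)$ and $(M\#\Sigma,\phi)$.

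Under the Kirby–Siebenmann bijection, the class of $(N,f)\in\mathcal{C}(M)$ corresponds to the concordance class of the smoothing $f_*\sigma_N$ of the topological manifold $M$, with classifying map $c_{f_*\sigma_N}:M\to Top/O$. The two smoothings to compare are $h_{\Sigma*}\sigma_{M\#\Sigma}$ and $\phi_*\sigma_{M\#\Sigma}=g_*\bigl(h_{\Sigma*}\sigma_{M\#\Sigma}\bigr)$, and by naturality the classifying map of $g_*\tau$ is $c_\tau\circ g^{-1}$. Since $g\simeq Id_M$ forces $g^{-1}\simeq Id_M$, we obtain $c_{g_*\tau}\simeq c_\tau$, so the two smoothings determine the same element of $[M,Top/O]$; they are hence concordant, yielding $(M\#\Sigma,h_\Sigma)\sim (M\#\Sigma,\phi)\sim(M,Id_M)$ in $\mathcal{C}(M)$, and therefore $\Sigma\in I_c(M)$.

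The main obstacle is the naturality assertion: that the action of a self-homeomorphism $g:M\to M$ on $\mathcal{C}(M)\cong[M,Top/O]$ is implemented by $c\mapsto c\circ g^{-1}$, and that this is the identity on homotopy classes whenever $g\simeq Id_M$. This is a formal consequence of the bijection being realized through the representing $H$-space $Top/O$, but it is the step on which the whole argument turns. The identification requires $\dim M\geq 5$, which holds for every $(q,k)$ with $q,k\geq 1$ except the edge case $(q,k)=(1,1)$.
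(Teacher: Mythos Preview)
Your argument has a genuine gap at the naturality step. The identity $c_{g_*\tau}=c_\tau\circ g^{-1}$ holds when $g$ is a \emph{diffeomorphism} of $(M,\sigma_M)$, but for an arbitrary self-homeomorphism the correct formula is
\[
c_{g_*\tau}\;=\;c_\tau\circ g^{-1}\;+\;\alpha_g,
\]
where $\alpha_g\in[M,Top/O]$ is the class of the smoothing $g_*\sigma_M$ relative to the base structure $\sigma_M$ (equivalently, the image of $[(M,g)]$ under the Kirby--Siebenmann bijection). The point is that the bijection $\mathcal{C}(M)\cong[M,Top/O]$ is obtained by comparing an arbitrary smoothing with the \emph{fixed} base smoothing $\sigma_M$; it is therefore natural only for maps preserving $\sigma_M$, i.e.\ diffeomorphisms, and it is not a ``formal consequence'' of $Top/O$ being an $H$-space that $\alpha_g=0$ whenever $g\simeq Id_M$. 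In fact your own computation gives $0=c_{g_*\tau}=c_\tau+\alpha_g$, so what you actually need is precisely $\alpha_g=0$, and that is exactly the statement that $[(M,g)]=[(M,Id)]$ in $\mathcal{C}(M)$ for $g\simeq Id_M$---which is essentially equivalent to the assertion you are trying to prove.

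Note also that your argument uses nothing about $\mathbb{C}P^q\times\mathbb{S}^{2k}$: if valid, it would establish $I_h(M)=I_c(M)$ for every closed smooth manifold of dimension~$\geq 5$. Whether or not that general statement is true, you have not proved it here. The paper does not attempt a self-contained argument; it simply invokes \cite[Corollary~3.2]{RK17}, which supplies a criterion guaranteeing $I_h=I_c$ that applies to these particular manifolds. (Your remark on the edge case $(q,k)=(1,1)$ is harmless: there $\dim M=4$ and $\Theta_4=0$, so both groups vanish trivially.)
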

 \begin{proof}
     This follows quickly from \cite[Corollary 3.2]{RK17}.
 \end{proof}
\begin{theorem}\label{inertiaCP2S6}
    The inertia group of $\mathbb{C}P^2\times\mathbb{S}^6$ is $\mathbb{Z}/2.$
\end{theorem}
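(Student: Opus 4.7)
The natural strategy is a sandwich argument: bound the inertia group below using the concordance inertia, and bound it above using the control provided by Theorem~\ref{homoequiimplydiffeo} together with Lemma~\ref{homotopyinertiaequalconcorinertia}. The dimensional setup is $n+k=10$, so $I(\C P^2\times \s^6)$ sits inside $\Theta_{10}\cong \Z/2\oplus \Z/3$, and our task is to identify its order.

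For the lower bound, observe that $I_c(\C P^2 \times \s^6)\subseteq I(\C P^2\times \s^6)$ by definition, and $\C P^2$ is a non-spin $4$-manifold, so Theorem \ref{concorM^4}(ii)(b) gives $I_c(\C P^2 \times \s^6)=\Z/2$. This already forces $\Z/2\subseteq I(\C P^2\times \s^6)$.

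For the upper bound, I plan to show that any $\Sigma \in I(\C P^2 \times \s^6)$ automatically lies in $I_h(\C P^2\times\s^6)$. Suppose $\Sigma\in I(\C P^2\times \s^6)$, so there is an orientation-preserving diffeomorphism $f:(\C P^2\times \s^6)\# \Sigma \to \C P^2\times \s^6$. Composing with the inverse of the standard homeomorphism produces a self homotopy equivalence $f\circ h_{\Sigma}^{-1}: \C P^2 \times \s^6 \to \C P^2\times \s^6$. By Theorem \ref{homoequiimplydiffeo}, this homotopy equivalence is homotopic to a diffeomorphism $\phi$. Then $\phi^{-1}\circ f$ is a diffeomorphism $(\C P^2\times \s^6)\# \Sigma \to \C P^2\times \s^6$ homotopic to $h_{\Sigma}$, which exhibits $\Sigma\in I_h(\C P^2\times \s^6)$. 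Combining this with Lemma \ref{homotopyinertiaequalconcorinertia} gives
\[ I(\C P^2\times \s^6)\subseteq I_h(\C P^2\times \s^6)=I_c(\C P^2 \times \s^6)=\Z/2, \]
which together with the lower bound proves the theorem.

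The only nontrivial input is Theorem \ref{homoequiimplydiffeo}, which has already been established in the paper; after invoking it, the remaining argument is formal. I do not foresee any obstacle beyond keeping track of the three relevant inertia groups ($I,I_h,I_c$) and their containments.
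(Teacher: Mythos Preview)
Your proposal is correct and follows essentially the same approach as the paper: compute $I_c(\C P^2\times\s^6)=\Z/2$ via Theorem~\ref{concorM^4}, upgrade to $I_h=\Z/2$ via Lemma~\ref{homotopyinertiaequalconcorinertia}, and then use Theorem~\ref{homoequiimplydiffeo} to force $I\subseteq I_h$. You have simply made explicit the formal step (replacing a self homotopy equivalence by a homotopic diffeomorphism to land $\Sigma$ in $I_h$) that the paper leaves to the reader.
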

\begin{proof}
It follows from Theorem \ref{concorM^4} and Lemma \ref{homotopyinertiaequalconcorinertia} that $I_h(\mathbb{C}P^2\times\mathbb{S}^6)=\mathbb{Z}/2$. Hence, the conclusion immediately obtained from  Theorem \ref{homoequiimplydiffeo}.
\end{proof}

 \begin{theorem}\label{classificationCP2S6}
 If $N$ is a closed, oriented, smooth manifold that is homeomorphic to $\mathbb{C}P^2\times\mathbb{S}^6,$ then $N$ is oriented diffeomorphic to exactly one of the manifolds $\mathbb{C}P^2\times\mathbb{S}^6, (\mathbb{C}P^2\times\mathbb{S}^6)\#\Sigma_{\alpha_1}$, or 
 $(\mathbb{C}P^2\times\mathbb{S}^6)\# (\Sigma_{\alpha_1})^{-1},$ where $\Sigma_{\alpha_1}\in \Theta_{10}$ is the exotic sphere of order $3.$
\end{theorem}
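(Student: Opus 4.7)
The plan is to combine the concordance-class computation for $\mathbb{C}P^2\times\mathbb{S}^6$ with the inertia-group identification of Theorem~\ref{inertiaCP2S6} in order to reduce the classification to arithmetic in $\Theta_{10}\cong\mathbb{Z}/6$. Since $4+6=10\geq 5$, Corollary~\ref{concorMtimesSk} yields the splitting $\mathcal{C}(\mathbb{C}P^2\times\mathbb{S}^6)\cong[\mathbb{C}P^2,Top/O]\oplus\pi_6(Top/O)\oplus[\Sigma^6\mathbb{C}P^2,Top/O]$. The first two summands vanish because $Top/O$ is $6$-connected; in particular $\pi_6(Top/O)=0$, and the cofibration $\mathbb{S}^2\hookrightarrow\mathbb{C}P^2\to\mathbb{S}^4$ forces $[\mathbb{C}P^2,Top/O]=0$. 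The third summand equals $\mathbb{Z}/3\subset\Theta_{10}$ by Proposition~\ref{prop3.6}(5). Combined with Lemma~\ref{rmk2.5}, this makes the degree-one induced map $f_{\mathbb{C}P^2\times\mathbb{S}^6}^*\colon\Theta_{10}\to\mathcal{C}(\mathbb{C}P^2\times\mathbb{S}^6)\cong\mathbb{Z}/3$ a surjection with kernel $I_c(\mathbb{C}P^2\times\mathbb{S}^6)=\mathbb{Z}/2$ (Theorem~\ref{concorM^4}(ii)(b)); since this kernel has trivial $3$-torsion, the order-$3$ exotic sphere $\Sigma_{\alpha_1}$ must map to a generator of $\mathbb{Z}/3$.

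For existence, any homeomorphism $g\colon N\to\mathbb{C}P^2\times\mathbb{S}^6$ determines a class $[(N,g)]=f_{\mathbb{C}P^2\times\mathbb{S}^6}^*([\Sigma_{\alpha_1}^k])\in\mathcal{C}(\mathbb{C}P^2\times\mathbb{S}^6)$ for a unique $k\in\{0,1,2\}$. Unwinding the definition of $f_{\mathbb{C}P^2\times\mathbb{S}^6}^*$ shows that $(N,g)$ is concordant to $\bigl((\mathbb{C}P^2\times\mathbb{S}^6)\#\Sigma_{\alpha_1}^k,\;h_{\Sigma_{\alpha_1}^k}\bigr)$; by the very definition of concordance this provides an orientation-preserving diffeomorphism from $N$ to $(\mathbb{C}P^2\times\mathbb{S}^6)\#\Sigma_{\alpha_1}^k$. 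Since $\Sigma_{\alpha_1}^2=\Sigma_{\alpha_1}^{-1}$ inside the $\mathbb{Z}/3$-subgroup of $\Theta_{10}$, $N$ is orientation-preservingly diffeomorphic to one of the three representatives listed in the statement.

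For uniqueness, I would suppose that $(\mathbb{C}P^2\times\mathbb{S}^6)\#\Sigma_{\alpha_1}^k\cong(\mathbb{C}P^2\times\mathbb{S}^6)\#\Sigma_{\alpha_1}^{k'}$ via an orientation-preserving diffeomorphism. Connect-summing both sides with $-\Sigma_{\alpha_1}^{k'}$ yields an orientation-preserving diffeomorphism from $\mathbb{C}P^2\times\mathbb{S}^6$ to $(\mathbb{C}P^2\times\mathbb{S}^6)\#\Sigma_{\alpha_1}^{k-k'}$, and hence $\Sigma_{\alpha_1}^{k-k'}\in I(\mathbb{C}P^2\times\mathbb{S}^6)=\mathbb{Z}/2$ by Theorem~\ref{inertiaCP2S6}. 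But $\Sigma_{\alpha_1}^{k-k'}$ lies in the $3$-primary summand of $\Theta_{10}\cong\mathbb{Z}/2\oplus\mathbb{Z}/3$, and the two primary summands intersect trivially; this forces $k\equiv k'\pmod{3}$, proving pairwise non-diffeomorphism. The main obstacle for this theorem is therefore not in the present bookkeeping argument but in the prior Theorem~\ref{inertiaCP2S6}, whose proof rests on the nontrivial Theorem~\ref{homoequiimplydiffeo} asserting that every self-homotopy equivalence of $\mathbb{C}P^2\times\mathbb{S}^6$ is homotopic to a diffeomorphism.
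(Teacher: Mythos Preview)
Your argument is correct and follows essentially the same route as the paper: compute $\mathcal{C}(\mathbb{C}P^2\times\mathbb{S}^6)\cong\mathbb{Z}/3$ via the splitting of Corollary~\ref{concorMtimesSk} and Proposition~\ref{prop3.6}(5), observe that $f_{\mathbb{C}P^2\times\mathbb{S}^6}^*$ is surjective, and then invoke Theorem~\ref{inertiaCP2S6} for the uniqueness. Your write-up is in fact more explicit than the paper's, which compresses the existence and uniqueness steps into a single sentence referencing Theorem~\ref{inertiaCP2S6}.
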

\begin{proof}
%Combining the decomposition (\ref{note2.3}) and the proof of Proposition \ref{prop3.6}{(5)} that $$[\mathbb{C}P^2\times\mathbb{S}^6,Top/O]\cong [\Sigma^6 \mathbb{C}P^2,Top/O]\cong \mathbb{Z}/3$$ and the induced degree one map $f^{*}_{\mathbb{C}P^2\times\mathbb{S}^6}: \Theta_{10}\to [\mathbb{C}P^2\times\mathbb{S}^6,Top/O]$ is onto. This implies that the concordance structure set $\mathcal{C}(\mathbb{C}P^2\times\mathbb{S}^6)$ is equal to $$\{(\mathbb{C}P^2\times\mathbb{S}^6) \#\Sigma ~|~\Sigma\in \mathbb{Z}/3\subset \Theta_{10}\}.$$ 

Combining the decomposition \eqref{note2.3} with the isomorphism \( [\Sigma^6 \mathbb{C}P^2, Top/O] \cong \mathbb{Z}/3 \) established in Proposition~\ref{prop3.6}, we obtain  
\[
[\mathbb{C}P^2 \times \mathbb{S}^6, Top/O] \cong \mathbb{Z}/3.
\]  
Moreover, the induced degree one map  
\[
f^{*}_{\mathbb{C}P^2 \times \mathbb{S}^6} : \Theta_{10} \to [\mathbb{C}P^2 \times \mathbb{S}^6, Top/O],
\]  
is surjective. This implies that the concordance structure set \( \mathcal{C}(\mathbb{C}P^2 \times \mathbb{S}^6) \) is given by  
\[
\mathcal{C}(\mathbb{C}P^2 \times \mathbb{S}^6) = \{ \left((\mathbb{C}P^2 \times \mathbb{S}^6) \# \Sigma, h_{\Sigma}\right) \mid \Sigma \in \mathbb{Z}/3 \subset \Theta_{10} \},
\]
where $h_{\Sigma}: (\mathbb{C}P^2\times\mathbb{S}^6)\#\Sigma\to \mathbb{C}P^2\times \mathbb{S}^6$ is the canonical homeomorphism corresponding to the element $\Sigma \in \Theta_{10}$. It now follows from Theorem~\ref{inertiaCP2S6} that neither \( (\mathbb{C}P^2 \times \mathbb{S}^6) \# \Sigma_{\alpha_1} \) nor \( (\mathbb{C}P^2 \times \mathbb{S}^6) \# \Sigma_{\alpha_1}^{-1} \) is diffeomorphic to \( \mathbb{C}P^2 \times \mathbb{S}^6 \), where \( \Sigma_{\alpha_1} \) denotes the exotic $10$-sphere of order 3.

Moreover, we claim that \( (\mathbb{C}P^2 \times \mathbb{S}^6) \# \Sigma_{\alpha_1} \) is not (oriented) diffeomorphic to \( (\mathbb{C}P^2 \times \mathbb{S}^6) \# \Sigma_{\alpha_1}^{-1} \). Indeed, if they were oriented diffeomorphic, it would follow that \( 2\Sigma_{\alpha_1} \in I(\mathbb{C}P^2 \times \mathbb{S}^6) = \mathbb{Z}/2 \). However, this contradicts the fact that \( \Sigma_{\alpha_1} \in \Theta_{10} \) has order $3$.

%Moreover, we claim that $(\mathbb{C}P^2\times\mathbb{S}^6)\# \Sigma_{\alpha_1}$ is not (oriented) diffeomorphic to $(\mathbb{C}P^2\times\mathbb{S}^6)\# \Sigma_{\alpha_1}^{-1}$. If they are oriented diffeomorphic, then $2\Sigma_{\alpha_1}\in I(\mathbb{C}P^2\times \mathbb{S}^6)=\mathbb{Z}/2$. This contradicts that $\Sigma_{\alpha_1}\in \Theta_{10}$ is of order 3. 

This completes the proof.
\end{proof}
%%%%%%%%%%%%%%%%%%%%%%%%%%%%%%%%%%%%%%%%%%%%%%%%%%%%%%%%%%%%%%%%%%%%%%%%%%%%%%%%%%%%%%%%%%%%%%%%%%%%%%%%%%%%%%%%%%%%%%%%%%%%%%%%%%%%%%%%%%%%%%%%%%%%%%%%%%%%%%%%%%%%%%%%%%%%%%%%%%%%%%%%%%%%%%%%%%%%%%%%%%%%%%%%%%%%%%%%%%%%%%%%%%%%%%%%%%%%%%%%%%%%%%%%%%%%%%%%%%%%%%%%%%%%%%%%%%%%%%%%%%%%%%%%%%%%%%%%%%%%%%%%%%%%%%%%%%%%%%%%%%%%%%%%%%%%%%%%%%%%%%%%%%%%%%%%%%%%%%%%%%%%%%%%%%%%%%%%%%%%%%%%%%%%%%%%%%%%%%%%%%%%%%%%%%%%%%%%%%%%%%%%%%%%%%%%%%%%%%%%%%%%%%%%%%%%%%%%%%%%%%%%%%%%%%%%%%%%%%%%%%%

Moreover, the following theorem provides the corresponding classification for manifolds tangentially homotopy equivalent to $\mathbb{C}P^{2} \times \mathbb{S}^{6}$.

  \begin{theorem}\label{tangetial_classification}
     % If a closed, oriented, smooth manifold is tangentially homotopy equivalent to $\mathbb{C}P^2\times\mathbb{S}^6$, then $N$ is oriented diffeomorphic to either $\mathbb{C}P^2\times\mathbb{S}^6$, $(\mathbb{C}P^2\times\mathbb{S}^6)\# \Sigma_{\alpha_1}$ or $(\mathbb{C}P^2\times \mathbb{S}^6)\# (\Sigma_{\alpha_1})^{-1}$, where $\Sigma_{\alpha_1}$ is the exotic $10$-sphere of order $3$.
      
     If a closed, oriented, smooth manifold \( N \) is tangentially homotopy equivalent to \( \mathbb{C}P^2 \times \mathbb{S}^6 \), then \( N \) is oriented diffeomorphic to one of the following non-diffeomorphic manifolds: \( \mathbb{C}P^2 \times \mathbb{S}^6 \), \( (\mathbb{C}P^2 \times \mathbb{S}^6) \# \Sigma_{\alpha_1} \), or \( (\mathbb{C}P^2 \times \mathbb{S}^6) \# \Sigma_{\alpha_1}^{-1} \), where \( \Sigma_{\alpha_1} \) denotes the exotic 10-sphere of order $3$.
\end{theorem}  
\begin{proof}
   Let $f:N\to \mathbb{C}P^2\times\mathbb{S}^6$ be a tangential homotopy equivalence. 

If $\eta^{Diff}(f)$ is trivial, then it follows from the smooth surgery exact sequence that $N$ is diffeomorphic to $\mathbb{C}P^2\times\mathbb{S}^6$.

  Suppose $\eta^{Diff}(f)$ is non-trivial. Then we prove that the normal invariant $\eta^{Diff}(f)$ lies in the torsion part of the summand $[\Sigma^6\mathbb{C}P^2, G/O]$ of $[\mathbb{C}P^2\times\mathbb{S}^6, G/O]$. To prove this, We note from \cite[Page 107]{DCIH15} that the tangential structure set $\mathcal{S}^t(\mathbb{C}P^2\times \mathbb{S}^6)$ for $\mathbb{C}P^2\times \mathbb{S}^6$ fits into the following commutative diagram
  \[\begin{tikzcd}
	0 & {\mathcal{S}^t(\mathbb{C}P^2\times\mathbb{S}^6)} & {[\mathbb{C}P^2\times \mathbb{S}^6, G]\cong \mathbb{Z}/3\oplus \pi_6^s} & {L_{10}(e)} \\
	0 & {\mathcal{S}^{Top}(\mathbb{C}P^2\times\mathbb{S}^6)} & {[\mathbb{C}P^2\times \mathbb{S}^6, G/Top]} & {L_{10}(e),}
	\arrow[from=1-1, to=1-2]
	\arrow["{=}"', from=1-1, to=2-1]
	\arrow["{\eta^{t}}", from=1-2, to=1-3]
	\arrow[from=1-2, to=2-2]
	\arrow["{\Theta^t}", from=1-3, to=1-4]
	\arrow["{\mathcal{J}_*}"', from=1-3, to=2-3]
	\arrow["{=}", from=1-4, to=2-4]
	\arrow[from=2-1, to=2-2]
	\arrow["{\eta^{Top}}"', from=2-2, to=2-3]
	\arrow["{\Theta^{Top}}"', from=2-3, to=2-4]
\end{tikzcd}\]
where $\mathcal{J}: G\to G/Top$ is the canonical map. Since the topological surgery obstruction map $\Theta^{Top}:\pi_6(G/Top)\to L_6(e)$ for $\mathbb{S}^6$ is an isomorphism \cite[Page 421]{Luck_surgery}, we obtain from \cite[Proposition 3.1]{DC10} and \cite[Page 288]{AR92} that the restriction of the map $\Theta^{Top}: [\mathbb{C}P^2\times\mathbb{S}^6, G/Top]\to L_{10}(e)$ to the summand $\pi_{6}(G/Top)$ is non-trivial. Moreover, since $\pi_6(Top)$ is trivial, the restriction $\mathcal{J}_*\vert_{\pi_6^s}: \pi_6^s\to [\mathbb{C}P^2\times\mathbb{S}^6, G/Top]$ is injective. Therefore, by the commutativity of the diagram, it follows that $\Theta^t\vert_{\pi_6^s}: \pi_6^s\to L_{10}(e)$ is non-trivial. Hence, $\eta^t(f)\in \mathbb{Z}/3\cong [\Sigma^6\mathbb{C}P^2, G]\subset [\mathbb{C}P^2\times\mathbb{S}^6, G]$. To compute the normal invarinat of $f$, we consider the following commutative digram from \cite[Page 107]{DCIH15}
 \[\begin{tikzcd}
	0 & {\mathcal{S}^t(\mathbb{C}P^2\times\mathbb{S}^6)} & {[\mathbb{C}P^2\times \mathbb{S}^6, G]\cong \mathbb{Z}/3\oplus \pi_6^s} & {L_{10}(e)} \\
	0 & {\mathcal{S}^{Diff}(\mathbb{C}P^2\times\mathbb{S}^6)} & {[\mathbb{C}P^2\times \mathbb{S}^6, G/O]} & {L_{10}(e).}
	\arrow[from=1-1, to=1-2]
	\arrow["{=}"', from=1-1, to=2-1]
	\arrow["{\eta^{t}}", from=1-2, to=1-3]
	\arrow[from=1-2, to=2-2]
	\arrow["{\Theta^t}", from=1-3, to=1-4]
	\arrow["{j_*}"', from=1-3, to=2-3]
	\arrow["{=}", from=1-4, to=2-4]
	\arrow[from=2-1, to=2-2]
	\arrow["{\eta^{Diff}}"', from=2-2, to=2-3]
	\arrow["{\Theta^{Diff}}"', from=2-3, to=2-4]
\end{tikzcd}\]
%\[\begin{tikzcd}
%	{\mathcal{S}^t(\mathbb{C}P^2\times\mathbb{S}^6)} && {[\mathbb{C}P^2\times\mathbb{S}^6, G]} \\
%	{\mathcal{S}^{Diff}(\mathbb{C}P^2\times\mathbb{S}^6)} && {[\mathbb{C}P^2\times\mathbb{S}^6,G/O]}
%	\arrow["{\eta^t}", from=1-1, to=1-3]
%	\arrow[hook,from=1-1, to=2-1]
%	\arrow["{j_*}", from=1-3, to=2-3]
%	\arrow["{\eta^{Diff}}"', from=2-1, to=2-3]
%\end{tikzcd}\]
Since $[\Sigma^6\mathbb{C}P^2, O], \pi_6(O)$ and $[\mathbb{C}P^2, O]$ are all trivial, it follows from the spilitting \eqref{note2.3} and the long exact sequence induced from $O\xrightarrow{\Omega \widetilde{J}}G\xrightarrow{j}G/O$ that $j_*:[\mathbb{C}P^2\times\mathbb{S}^6, G]\to [\mathbb{C}P^2\times\mathbb{S}^6, G/O]$ is injective. Hence, from the above commutative diagram, we get that $\eta^{Diff}(f)$ lies in the torsion summand of $[\Sigma^6\mathbb{C}P^2, G/O]\cong \mathbb{Z}\oplus\mathbb{Z}/3$. Therefore, from the smooth surgery exact sequence for \( \mathbb{C}P^2 \times \mathbb{S}^6 \), it follows that the element \( (N, f) \in \mathcal{S}^{Diff}(\mathbb{C}P^2 \times \mathbb{S}^6) \) is equivalent to either  
\[
(\mathbb{C}P^2 \times \mathbb{S}^6 \# \Sigma_{\alpha_1}, h_{\Sigma_{\alpha_1}}) \quad \text{or} \quad (\mathbb{C}P^2 \times \mathbb{S}^6 \# \Sigma_{\alpha_1}^{-1}, h_{\Sigma_{\alpha_1}^{-1}}),
\]  
where both elements are non-standard in \( \mathcal{S}^{Diff}(\mathbb{C}P^2 \times \mathbb{S}^6) \) by Theorem~\ref{inertiaCP2S6}, and they represent distinct classes by Theorem~\ref{classificationCP2S6}. This completes the proof.  
%If $\eta^{Diff}(f)$ is non-trivial, then by Theorem \ref{homoequiimplydiffeo} we have $\eta^{Diff}(g\circ f)$ is non-trivial and lies in $[\Sigma^6\mathbb{C}P^2, G/O]$ for any $g\in \mathcal{E}(\mathbb{C}P^2\times\mathbb{S}^6)$. Hence, combining Theorem \ref{inertiaCP2S6} and smooth surgery exact sequence, we obtain $N$ is orineted diffeomorphic to either $(\mathbb{C}P^2\times\mathbb{S}^6)\#\Sigma_{\alpha_1}$ of $(\mathbb{C}P^2\times\mathbb{S}^6)\#\Sigma_{\alpha_1}^{-1}$. This completes the proof.
\end{proof}

%%%%%%%%%%%%%%%%%%%%%%%%%%%%%%%%%%%%%%%%%%%%%%%%%%%%%%%%%%%%%%%%%%%%%%%%%%%%%%%%%%%%%%%%%%%%%%%%%%%%%%%%%%%%%%%%%%%%%%%%%%%%%%%%%%%%%%%%%%%%%%%%%%%%%%%%%%%%%%%%%%%%%%%%%%%%%%%%%%%%%%%%%%%%%%%%%%%%%%%%%%%%%%%%%%%%%%%%%%%%%%%%%%%%%%%%%%%%%%%%%%%%%%%%%%%%%%%%%%%%%%%%	

\end{document}